\theoremstyle{plain}
\newtheorem{theorem}{Theorem}[section]
\newtheorem{lemma}[theorem]{Lemma}
\newtheorem{corollary}[theorem]{Corollary}
\newtheorem{proposition}[theorem]{Proposition}
\newtheorem{remark}[theorem]{Remark}
\newtheorem{question}[theorem]{Question}
\theoremstyle{definition}
\newtheorem{definition}[theorem]{Definition}
\newcommand{\kah}{K\"{a}hler }
\newcommand{\idd}{i\partial\overline{\partial}}
\subjclass[2020]{32L10, 32G05, 14F18}
\keywords{ 
direct image sheaves, Nakano positivity, singular Hermitian metrics, $L^2$-estimates.}
\begin{document}
\title
[Positivity of direct image sheaves]
{Dual Nakano positivity and singular Nakano positivity of direct image sheaves}
\author{Yuta Watanabe}
\date{}


\begin{abstract}
    Let $f:X\to Y$ be a surjective projective map and $L$ be a holomorphic line bundle on $X$ equipped with a (singular) semi-positive Hermitian metric $h$.
    In this article, by studying the canonical metric on the direct image sheaf of the twisted relative canonical bundles $K_{X/Y}\otimes L\otimes\mathscr{I}(h)$, 
    we obtain that this metric has dual Nakano semi-positivity when $h$ is smooth and there is no deformation by $f$
    and that this metric has locally Nakano semi-positivity in the singular sense when $h$ is singular. 
\end{abstract}

\vspace{-5mm}

\maketitle


\vspace{-8mm}

\section{Introduction}

Let $X$ be a \kah manifold of dimension $m+n$ and $Y$ be a complex manifold of dimension $m$.
We consider a proper holomorphic submersion $f:X\to Y$. The relative canonical bundle $K_{X/Y}$ corresponding to the map $f$ is $K_{X/Y}=K_X\otimes f^*K_Y^{-1}$.
There is a natural isomorphism $K_{X/Y}|_{X_t}\cong K_{X_t}$ when restricted to a generic fiber $X_t$ of $t\in Y$.
It is effective in many studies that the variation of the complex structure of each fiber $X_t$ is reflected in the positivity of the relative canonical bundle $K_{X/Y}$.
Therefore, the positivity properties of this bundle play important role in the study of the several complex variables and complex algebraic geometry.
In practice, we frequently deal with twisted versions $K_{X/Y}\otimes L$, where $L\to X$ is a holomorphic line bundle equipped with a smooth (semi)-positive Hermitian metric $h$.
One way to research the properties of this bundle is the direct image sheaf $f_*(K_{X/Y}\otimes L)$ on $Y$.

The positivity of this direct image sheaf has been well studied in \cite{Ber09}, \cite{BP08}, \cite{PT18}, \cite{HPS18}, \cite{BPW19}, \cite{DNWZ22}. 
In \cite{Ber09}, Berndtsson showed that the smooth canonical Hermitian metric $H$ induced by $h$ has Nakano (semi)-positivity (see Theorem\,\ref{Ber09, Thm1.2}).
First, we show that the smooth canonical Hermitian metric $H$ has dual Nakano (semi)-positivity if complex structures of fibers has no variation, this means that we can take the Kodaira-Spencer forms to be zero.
Introducing the $(n-1,n-1)$-form to determine dual Nakano positivity (see Definition \ref{def of T for dual Nakano}), we prove it by taking over Berndtsson's method of calculation to compute the positivity of curvature.

\begin{theorem}\label{d Nak posi of direct image sheaf}
    Let $L$ be a holomorphic line bundle over a \kah manifold $X$ equipped with a smooth (semi)-positive Hermitian metric $h$ and $f:X\to Y$ be a proper holomorphic submersion between two complex manifolds.
    For the Kodaira-Spencer map $\rho_t:T_{Y,t}^{1,0}\to H^{0,1}(X_t,T^{1,0}_{X_t})$, if Kodaira-Spencer forms representing classes $\rho_t(\partial/\partial t_j)$ can be taken to be zero, 
    then the smooth canonical Hermitian metric $H$ on $f_*(K_{X/Y}\otimes L)$ has dual Nakano semi-positivity.
\end{theorem}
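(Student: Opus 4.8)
The plan is to carry over Berndtsson's fibre-integral computation of the curvature of the canonical metric (the one behind Theorem~\ref{Ber09, Thm1.2}), but to contract the resulting curvature expression against the $(n-1,n-1)$-form of Definition~\ref{def of T for dual Nakano} so as to test dual Nakano positivity instead of Nakano positivity, and then to use the vanishing of the Kodaira--Spencer forms to discard the only terms of indefinite sign. First I would fix the local picture: over a coordinate chart $U\subset Y$ with coordinates $t=(t_1,\dots,t_m)$, a holomorphic section of $E:=f_*(K_{X/Y}\otimes L)$ is represented by a holomorphic family $(u_t)_{t\in U}$ of holomorphic $L|_{X_t}$-valued $(n,0)$-forms with $\lVert u(t)\rVert_H^2=c_n\int_{X_t}u_t\wedge\overline{u_t}\,e^{-\varphi}$, where $\varphi$ is a local weight of $h$; since $h$ is smooth, $\mathscr I(h)=\mathscr O_X$ and this is the sheaf in the statement, and, as in \cite{Ber09}, we work on the open locus where $E$ is locally free. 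A general test tensor at a point $t_0\in U$ is $\tau=\sum_j\partial/\partial t_j\otimes v_j$ with $v_j\in E_{t_0}$, and by Definition~\ref{def of T for dual Nakano} the dual Nakano semi-positivity of $H$ is equivalent to the statement that, for every $t_0$ and every $\tau$, the Chern curvature $i\Theta_H$ of $(E,H)$, contracted against the associated positive $(n-1,n-1)$-form $T_\tau$ on $X_{t_0}$ (the ``crossed'' pairing of the bundle indices), is $\geq 0$; following the scheme of \cite{Ber09} it suffices to prove the corresponding pointwise curvature estimate using holomorphic sections with prescribed first-order behaviour at $t_0$.

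Next I would reproduce Berndtsson's pointwise curvature identity. Choosing a horizontal lift of the vector fields $\partial/\partial t_j$ along the fibres, the identity behind Theorem~\ref{Ber09, Thm1.2} writes $\langle i\Theta_H u,u\rangle(t_0)$ as a fibre integral over $X_{t_0}$ of three kinds of contributions: (a) a term built from the curvature $\idd\varphi$ of $(L,h)$; (b) a term built from the Kodaira--Spencer forms, i.e.\ from $\overline\partial$ of the horizontal lifts of the $\partial/\partial t_j$ restricted to the fibre; and (c) a norm-square term of the shape $\lVert\overline\partial(\,\cdot\,)\rVert^2$, which accounts for the failure of the pertinent representatives to vary holomorphically in $t$ and which is manifestly non-negative for Berndtsson's original (Nakano) pairing. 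Repeating this computation with the crossed contraction changes only the index bookkeeping: contribution~(a) takes the shape $c_n\int_{X_{t_0}}\idd\varphi\wedge T_\tau$ with $T_\tau$ the positive $(n-1,n-1)$-form of Definition~\ref{def of T for dual Nakano}, and contribution~(c) should be re-expressed --- or re-absorbed into the first term via the same $(n-1,n-1)$-form --- so that it remains $\geq 0$.

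Finally I would invoke the hypothesis. Since the classes $\rho_t(\partial/\partial t_j)$ admit representatives that are identically zero, one may choose the horizontal distribution so that all Kodaira--Spencer forms vanish; then contribution~(b) is identically zero, and so is the part of~(c) coming from the variation of the complex structure of the fibres. What remains is $c_n\int_{X_{t_0}}\idd\varphi\wedge T_\tau\geq 0$, because $h$ is semi-positive so $\idd\varphi\geq 0$, and $T_\tau\geq 0$ by construction, together with the residual non-negative norm-square. Hence $\langle i\Theta_H\tau,\tau\rangle\geq 0$ in the dual Nakano sense for all $t_0$ and all $\tau$, that is, $(E,H)$ is dual Nakano semi-positive, which is the assertion of Theorem~\ref{d Nak posi of direct image sheaf}.

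I expect the main obstacle to be the second step: carrying Berndtsson's curvature identity through the crossed contraction and confirming that, once the Kodaira--Spencer terms are gone, the surviving expression is precisely $c_n\int_{X_{t_0}}\idd\varphi\wedge T_\tau$ plus a non-negative remainder --- in particular that the $\overline\partial$-norm-square term, engineered to be non-negative for the \emph{Nakano} pairing, does no harm under the \emph{dual Nakano} pairing. The positivity of the curvature-of-$L$ term in Berndtsson's argument rests on a pointwise Lefschetz-type identity on the fibres; supplying the analogue for the dual Nakano pairing is exactly the purpose of the $(n-1,n-1)$-form $T$ of Definition~\ref{def of T for dual Nakano}, and verifying that it plays this role --- together with making rigorous the passage from ``the Kodaira--Spencer classes vanish'' to ``there is a horizontal lift killing the mixed-direction curvature terms'' --- is where the real work lies.
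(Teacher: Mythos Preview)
Your proposal is correct and follows essentially the paper's route: compute $i\partial\overline\partial\widetilde{T}^H_u$ via a Berndtsson-style fibre integral (this is the paper's Proposition~\ref{calculation of dual Nakano positivity}), obtain a curvature-of-$L$ term with the right sign plus a Kodaira--Spencer term $c_n\int_{X_0}\sum\eta^j_k\wedge\overline{\eta}^k_j\,e^{-\varphi}$, then invoke the hypothesis $\theta_k=0$ (hence $\eta^j_k=\theta_k\rfloor u_j=0$) to kill the latter and conclude via Proposition~\ref{d Nak posi if and only if T condition}. One clarification worth recording: after choosing representatives as in Proposition~\ref{Ber09, Prop4.2} there is no separate ``residual non-negative norm-square''---only the two terms above survive---and under the crossed (dual Nakano) pairing the $\eta$-term is \emph{not} a norm-square (it becomes $-\int_{X_0}\sum_{j,k,l}\eta_{jkl}\overline{\eta}_{kjl}\,e^{-\varphi}$, of indefinite sign), which is exactly why the Kodaira--Spencer vanishing is essential rather than merely convenient.
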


Examples of this theorem are projections maps from the direct product of manifolds (see Corollary \ref{d Nak posi for X*Y to Y}) 
and projectivized bundle $\pi:\mathbb{P}(V)\to Y$ for an ample vector bundle $V\to Y$ when $\mathrm{det}\,V$ has a metric satisfying certain condition (see Theorem \ref{d Nak posi for d.i.sheaf in subsection}).

Second, we consider the case where the metric $h$ on $L$ with semi-positivity is singular, i.e. $h$ is pseudo-effective. 
In this case, twisting the multiplier ideal sheaf $\mathscr{I}(h)$ further to the sheaf $\omega_{X/Y}\otimes L$, 
we study the positivity of the direct image sheaf $\mathcal{E}:=f_*(\omega_{X/Y}\otimes L\otimes\mathscr{I}(h))$, 
where $f:X\to Y$ is a projective and surjective holomorphic mapping between two connected complex manifolds.
It is known that the torsion-free coherent sheaf $\mathcal{E}=f_*(\omega_{X/Y}\otimes L\otimes\mathscr{I}(h))$ has a singular canonical Hermitian metric $H$ induced by $h$, 
and this metric satisfies the minimal extension property and is Griffiths semi-positive (see Theorem\,\ref{HPS18, Theorem21.1}, \cite{BP08}, \cite{PT18}, \cite{HPS18}).

We show that this singular canonical Hermitian metric $H$ on $\mathcal{E}$ has a locally $L^2$-type Nakano semi-positivity.
Let $Y(\mathcal{E})\subseteq Y$ denote the maximal open subset where $\mathcal{E}$ is locally free, then $Z_{\mathcal{E}}:=Y\setminus Y(\mathcal{E})$ is a closed analytic subset of codimension $\geq2$.
Here, we define (see Definition\,\ref{def of ext L2 subsheaf}) the set $\Sigma_H$ on $Y$ related to the unbounded-ness of $H$ by 
\begin{align*}
    \Sigma_H:=\{t\in Y\mid \mathcal{E}_t\subsetneq H^0(X_t,K_{X_t}\otimes L|_{X_t})\}.
\end{align*}
Using the set $\Sigma_H$, we have the following.

\begin{theorem}\label{H of full loc L2 Nak semi-posi}
    If $X$ is projective and there exists an analytic set $A$ such that $\Sigma_H\subseteq A$ then $H$ is full locally $L^2$-type Nakano semi-positive on $Y(\mathcal{E})$ as in Definition \ref{Def loc L2 Nak semi posi on coh sheaf}.
\end{theorem}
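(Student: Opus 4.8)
The plan is to check, on each small Stein open set $U\Subset Y(\mathcal{E})$ over which $\mathcal{E}$ is locally free of rank $r$, the local $L^2$-solvability estimate of Definition \ref{Def loc L2 Nak semi posi on coh sheaf}. Fix a \kah form $\omega$ and a smooth strictly plurisubharmonic weight $\psi$ on $U$, put $B_\psi:=[\,\idd\psi\otimes\mathrm{Id}_{\mathcal{E}},\Lambda_\omega]$, and let $\alpha$ be a $\overline\partial$-closed $\mathcal{E}$-valued $(\dim Y,q)$-form on $U$ with $q\ge1$ and $c_\alpha:=\int_U\langle B_\psi^{-1}\alpha,\alpha\rangle_{\omega,H}\,e^{-\psi}<\infty$; I must produce $u$ with $\overline\partial u=\alpha$ and $\int_U|u|^2_{\omega,H}\,e^{-\psi}\le c_\alpha$. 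Analytic subsets are negligible for these integrals, so I may delete from $U$ the analytic locus where $f$ fails to be a submersion or where the fibrewise description of $\mathcal{E}$ degenerates, and thus assume $f$ is smooth over $U$ with connected projective fibres.

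The first step is to pull the equation up to the total space $X_U:=f^{-1}(U)$. By Theorem \ref{HPS18, Theorem21.1} and \cite{BP08,PT18,HPS18}, the canonical metric is the fibrewise $L^2$-metric: $\mathcal{E}_t=H^0\!\big(X_t,K_{X_t}\otimes L|_{X_t}\otimes\mathscr{I}(h|_{X_t})\big)$ with $\|s\|^2_{H,t}=\int_{X_t}c_n\,s\wedge\bar s\,e^{-\varphi_h}$. Through the canonical isomorphism $\mathcal{E}\otimes K_Y\cong f_*\!\big(\omega_X\otimes L\otimes\mathscr{I}(h)\big)|_U$, the form $\alpha$ corresponds to an $L$-valued $(\dim X,q)$-form $\tilde\alpha$ on $X_U$ that is $\overline\partial$-closed, holomorphic along the fibres, and with purely horizontal antiholomorphic part; under this correspondence $\overline\partial$ matches $\overline\partial$, and, by Fubini, $c_\alpha=\int_{X_U}\langle B_{f^*\psi}^{-1}\tilde\alpha,\tilde\alpha\rangle_{\omega_X}\,e^{-\varphi_h-f^*\psi}$, where $B_{f^*\psi}:=[\,\idd(f^*\psi)\otimes\mathrm{Id}_L,\Lambda_{\omega_X}]$ reduces on forms of this type to the pullback of $B_\psi$ since $\idd(f^*\psi)=f^*(\idd\psi)$ has no vertical component. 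Hence it suffices to solve $\overline\partial\tilde u=\tilde\alpha$ on $X_U$ with $\int_{X_U}|\tilde u|^2_{\omega_X}\,e^{-\varphi_h-f^*\psi}\le c_\alpha$; the $L^2$-minimal $\tilde u$, after the fibrewise harmonic projection (which does not increase norms), then descends to the desired $u$ on $U$.

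To produce $\tilde u$ I would invoke Berndtsson's $L^2$-theory for proper fibrations \cite{Ber09}, i.e.\ the weighted Bochner--Kodaira--Nakano inequality for $(\dim X,q)$-forms valued in $(L,h\,e^{-f^*\psi})$: its curvature operator is $[\,i\Theta_h+\idd(f^*\psi),\Lambda_{\omega_X}]$, whose first summand is $\ge0$ as a current because $h$ is singular semi-positive and whose second summand restricts, on the relevant forms, to the pullback of $B_\psi$, so that the a priori estimate is precisely $c_\alpha$. The one genuinely analytic point is that $i\Theta_h$ is merely a closed positive current, and this is where \emph{the projectivity of $X$ enters}: $X$ is then \kah and carries an ample line bundle $(A,h_A)$, so by Demailly's regularization there exist smooth metrics $h_\varepsilon$ on $L$ with $\varphi_{h_\varepsilon}\searrow\varphi_h$ and $i\Theta_{h_\varepsilon}\ge-\varepsilon\,i\Theta_{h_A}$; solving with $L$ twisted by an arbitrarily small multiple of $A$ and metric $h_\varepsilon$ makes the curvature $\ge0$ in the horizontal directions, and, the fibres being compact, the $\varepsilon$-correction stays controlled, so the minimal solutions $\tilde u_\varepsilon$ have norms essentially bounded by $c_\alpha$ and a weak limit $\tilde u$ solves $\overline\partial\tilde u=\tilde\alpha$ with $\int_{X_U}|\tilde u|^2\,e^{-\varphi_h-f^*\psi}\le c_\alpha$ by monotone convergence.

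Finally, the hypothesis $\Sigma_H\subseteq A$ is what upgrades the conclusion from a statement about the extension $L^2$-subsheaf of Definition \ref{def of ext L2 subsheaf} to the \emph{full} statement on $(\mathcal{E},H)$: on $U\setminus A$ one has $\mathcal{E}_t=H^0(X_t,K_{X_t}\otimes L|_{X_t})$ by the very definition of $\Sigma_H$, so there the $L^2$-subsheaf of $(\mathcal{E},H)$ equals $\mathcal{E}$ and the preceding argument applies verbatim; since $A$ is an analytic set, hence nowhere dense and of measure zero, the solution $u$ obtained over $U\setminus A$ is square-integrable on all of $U$ with respect to $H$ and still satisfies $\overline\partial u=\alpha$ and the bound there, which is the assertion over $Y(\mathcal{E})$. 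I expect the main obstacle to be the third step carried out with all the singularities present at once --- the singular weight $\varphi_h$, the unboundedness of $H$ accumulating near $\Sigma_H$, and the vertical degeneracy of $f^*\psi$ --- namely making the uniform-in-$\varepsilon$ bounds and the passage to the limit in the $\overline\partial$-equation rigorous, and verifying that the pushed-down form is genuinely $\mathcal{E}$-valued and realizes the estimate $c_\alpha$.
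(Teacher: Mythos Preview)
Your overall architecture---lift $\alpha$ to an $L$-valued $(n+m,1)$-form $\tilde\alpha$ on $X_U$, regularize the metric on $L$, solve $\overline\partial$ upstairs with the optimal estimate, pass to a weak limit, and descend using $\Sigma_H\subseteq A$---is exactly the strategy of the paper. Two points, however, deserve attention.

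\textbf{Regularization.} The paper does \emph{not} use Demailly's global approximation. It uses projectivity only to produce hypersurfaces $Z_1,Z_2$ so that $S=X\setminus(Z_1\cup Z_2)$ is Stein with $L|_S$ trivial, and then applies the Fornaess--Narasimhan theorem on $S$ to get smooth \emph{genuinely} plurisubharmonic $\varphi_\nu\searrow\varphi$. One then solves on the Stein sublevel sets $X_U^\nu=X_U\cap S_\nu$, where the key inequality
\[
[\,i\Theta_{L,h_\nu}+\idd f^*\psi,\Lambda_\omega\,]\ \ge\ [\,\idd f^*\psi,\Lambda_\omega\,]\ \ge\ 0
\]
holds because $i\Theta_{L,h_\nu}=i\partial\overline\partial\varphi_\nu\ge0$ with no loss. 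The paper explicitly remarks that Demailly's approximation is avoided here precisely because it only yields $i\Theta_{L,h_\varepsilon}\ge-\varepsilon\,\omega_A$, so the left-hand side above need not be $\ge0$. Your fix ``twist by an arbitrarily small multiple of $A$'' is formal: $A^\varepsilon$ is not a line bundle, and passing to a weight $e^{-\varepsilon\psi_A}$ forces you to trivialize $A$, i.e.\ to restrict to a Stein open set anyway---at which point you are back to the paper's setup. Moreover the solutions $\tilde u_\varepsilon$ are not compactly supported, so controlling $e^{-\varepsilon\psi_A}$ on their support uniformly in $\varepsilon$ is not automatic. (The Demailly route \emph{is} what the paper uses in the companion statement where $h$ is big, because the strict positivity of $i\Theta_h$ absorbs the $-\varepsilon\omega_A$.)

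\textbf{Descent and the role of $A$.} No fibrewise harmonic projection is needed: since $\tilde\alpha$ has only $d\overline t_j$-components, $\overline\partial\tilde u=\tilde\alpha$ forces $\overline\partial_z\tilde u=0$, so the restriction of $\tilde u$ to every fibre is already a holomorphic section of $K_{X_t}\otimes L|_{X_t}$, and $\tilde u$ descends directly to a form $u$ with values in $f_*(\omega_{X/Y}\otimes L)$. The hypothesis $\Sigma_H\subseteq A$ is then used as follows: the $L^2$-bound gives $\|u\|_H(t)<\infty$ for a.e.\ $t$, hence $u(t)\in\mathcal{E}_t$ off a null set; redefining $u=0$ on the analytic set $A$ makes $u$ everywhere $\mathcal{E}$-valued without changing the $L^2$-norm, and one still has $\overline\partial u=\alpha$ on $U\setminus A$. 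The passage to $\overline\partial u=\alpha$ on all of $U$ is not automatic---it requires the standard extension lemma for the $\overline\partial$-equation across analytic sets (Lemma~\ref{Ext d-equation for hypersurface}), which you should invoke explicitly. The same lemma is what allows one to upgrade from $Y\setminus Z$ to all of $Y(\mathcal{E})$.
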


The restriction of $\mathcal{E}$ to $Y(\mathcal{E})$ is holomorphic vector bundle and the $L^2$-subsheaf of this vector bundle with respect to $H$ is denoted by $\mathscr{E}(H)\subseteq\mathcal{E}|_{Y(\mathcal{E})}$ over $Y(\mathcal{E})$ which analogous to multiplier ideal sheaves.
For a natural inclusion $j:Y(\mathcal{E})=Y\setminus Z_{\mathcal{E}}\hookrightarrow Y$, we define the natural extended $L^2$-subsheaf with respect to $H$ over $Y$ by $\mathscr{E}_Y(H):=j_*\mathscr{E}(H)$ as in Definition \ref{def of ext L2 subsheaf}.

\begin{theorem}
    If $X$ is projective and there exists an analytic set $A$ such that $\Sigma_H\subseteq A$ then the natural extended $L^2$-subsheaf $\mathscr{E}_Y(H)$ over $Y$ is coherent.
\end{theorem}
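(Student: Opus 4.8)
The plan is to first establish coherence of $\mathscr{E}(H)$ on $Y(\mathcal{E})$ and then push forward across the non-locally-free locus $Z_{\mathcal{E}}=Y\setminus Y(\mathcal{E})$, which has codimension $\geq 2$. On $Y(\mathcal{E})$ the metric $H$ is full locally $L^2$-type Nakano semi-positive by Theorem \ref{H of full loc L2 Nak semi-posi}, and for a singular Hermitian metric on a holomorphic vector bundle with this property the associated $L^2$-subsheaf is coherent: locally one feeds the H\"ormander--Demailly type $L^2$-estimate contained in the definition of full locally $L^2$-type Nakano semi-positivity into Nadel's argument (the strong Noetherian property of coherent sheaves together with Krull's intersection theorem), exactly as in the coherence of multiplier ideal sheaves. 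Hence $\mathscr{E}_Y(H)|_{Y(\mathcal{E})}=\mathscr{E}(H)$ is coherent, and only coherence of $\mathscr{E}_Y(H)$ near a point $y_0\in Z_{\mathcal{E}}$ remains.

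Next I would use the reflexive hull $\mathcal{E}^{**}$, a coherent sheaf on $Y$. Since $\mathcal{E}$ is torsion-free and locally free on $Y(\mathcal{E})$ and $\mathrm{codim}\,Z_{\mathcal{E}}\geq 2$, the natural map $\mathcal{E}^{**}\to j_*(\mathcal{E}|_{Y(\mathcal{E})})$ is an isomorphism, so $\mathscr{E}_Y(H)=j_*\mathscr{E}(H)$ is a subsheaf of the coherent sheaf $\mathcal{E}^{**}$; in particular its stalks are finitely generated over the Noetherian rings $\mathcal{O}_{Y,y}$, and, $\mathcal{O}_Y$ being a coherent sheaf of rings, it suffices to show that $\mathscr{E}_Y(H)$ is \emph{locally} finitely generated near $y_0$. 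Writing $\mathcal{Q}:=(\mathcal{E}|_{Y(\mathcal{E})})/\mathscr{E}(H)$ and applying the left-exact functor $j_*$ to $0\to\mathscr{E}(H)\to\mathcal{E}|_{Y(\mathcal{E})}\to\mathcal{Q}\to 0$, one finds $\mathscr{E}_Y(H)=\ker(\mathcal{E}^{**}\to j_*\mathcal{Q})$, so the assertion is equivalent to coherence of $j_*\mathcal{Q}$ near $y_0$.

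This is the step that uses the hypotheses. For $t\notin\Sigma_H$ one has $\mathcal{E}_t=H^0(X_t,K_{X_t}\otimes L|_{X_t})$ and the minimal extension property of the canonical metric keeps $H$ locally bounded near such $t$, so every local holomorphic section of $\mathcal{E}$ there is $L^2$ with respect to $H$; thus $\mathscr{E}(H)=\mathcal{E}$, i.e.\ $\mathcal{Q}=0$, on $Y(\mathcal{E})\setminus A$, and $\mathcal{Q}$ is a coherent sheaf supported on the analytic set $A\cap Y(\mathcal{E})$. I would then deduce coherence of $j_*\mathcal{Q}$ near $y_0$ by exploiting the projectivity of $X$: it lets one run the $L^2$-extension (Ohsawa--Takegoshi type, with the curvature positivity of $H$ supplied by Theorem \ref{H of full loc L2 Nak semi-posi} on $Y(\mathcal{E})$ and the $\overline{\partial}$-solutions extended across the measure-zero set $Z_{\mathcal{E}}$) that produces, over a fixed neighbourhood of $y_0$, enough $L^2$-sections of $\mathcal{E}^{**}$ lying in $\mathscr{E}_Y(H)$ to generate all nearby stalks; combined with Krull's lemma and the strong Noetherian property this gives local finite generation, hence coherence. (Equivalently, under projectivity one may descend $\mathscr{E}(H)$, $\mathcal{E}$, $\mathcal{E}^{**}$ and $A$ to an algebraic model near $y_0$ and obtain coherence of $j_*\mathcal{Q}$ from the finiteness of algebraic direct images.)

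I expect the main obstacle to be precisely this last step. The direct image of a coherent sheaf under an open immersion whose complementary analytic set has codimension only $2$ need not be coherent --- already the push-forward of $\mathcal{O}$ from a punctured disc, or of a torsion quotient across a curve inside a surface, fails --- so one cannot conclude from the mere fact that $Z_{\mathcal{E}}$ has codimension $\geq 2$. Both the tameness of the singularities of $H$ encoded in $\Sigma_H\subseteq A$ and the analytic finiteness furnished by the projectivity of $X$ must be used essentially, and making the $L^2$-extension work uniformly across $Z_{\mathcal{E}}$ --- equivalently, showing that the $L^2$-modification $\mathscr{E}(H)\subseteq\mathcal{E}$ extends to a coherent modification of $\mathcal{E}^{**}$ on all of $Y$ --- is the technical heart of the argument.
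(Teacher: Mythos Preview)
Your first step---coherence of $\mathscr{E}(H)$ on $Y(\mathcal{E})$ via full local $L^2$-type Nakano semi-positivity together with the Nadel--Hosono--Inayama argument---matches the paper exactly; the paper cites Theorem~\ref{H of full loc L2 Nak semi-posi in subsection} and [Ina22,\,Proposition\,4.4] for precisely this.

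The extension across $Z_{\mathcal{E}}$ is where you diverge. The paper dispatches this step in a single sentence: after observing that $j_*\mathcal{O}_{Y\setminus Z_{\mathcal{E}}}\cong\mathcal{O}_Y$ because $\mathrm{codim}\,Z_{\mathcal{E}}\geq 2$, it simply concludes ``by Riemann's extension theorem'' that $j_*\mathscr{E}(H)=\mathscr{E}_Y(H)$ is coherent. None of your additional machinery---the reflexive hull $\mathcal{E}^{**}$, the quotient $\mathcal{Q}$ and its support in $A$, or a second $L^2$-extension argument across $Z_{\mathcal{E}}$---appears in the paper, and the hypothesis $\Sigma_H\subseteq A$ is invoked only inside the proof of Theorem~\ref{H of full loc L2 Nak semi-posi in subsection}, not again in the coherence argument.

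Your caution about this step is legitimate: as you note, $j_*$ of a coherent sheaf through an open immersion with codimension-$2$ complement is not coherent in general, so the paper's one-line citation is doing real work that is not spelled out. What one would actually use is that $\mathscr{E}(H)$ is a coherent subsheaf of the locally free sheaf $E=\mathcal{E}^{**}|_{Y(\mathcal{E})}$, so $j_*\mathscr{E}(H)\subseteq\mathcal{E}^{**}$, and then a Siu--Trautmann type extension result for coherent subsheaves of a reflexive sheaf across codimension $\geq 2$; but the paper does not make this explicit. In short, your approach is not so much a different route as a more scrupulous one at exactly the place you identify as the obstacle---the paper treats that place as routine.
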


Finally, we consider the relationship between the minimal extension property and Nakano semi-positivity 
and show that if a torsion-free coherent sheaf has a metric satisfying the minimal extension property, this sheaf does not necessarily have a Nakano semi-positive metric.
As a concrete example, we show that the quotient holomorphic vector bundle $(\mathbb{P}^n\times\mathbb{C}^{n+1})/\mathcal{O}_{\mathbb{P}^n}(-1)$ over $\mathbb{P}^n$ does not have a Nakano semi-positive metric and has a metric satisfying the minimal extension property.

\section{Positivity of smooth Hermitian metrics and $L^2$-estimates}

In this section, we define various positivity for holomorphic vector bundles and investigate its equivalence condition.

Let $X$ be a complex manifold of complex dimension $n$ equipped with a Hermitian metric $\omega$ and $(E,h)$ be a holomorphic Hermitian vector bundle of rank $r$ over $X$.
Let $(U,(z_1,\ldots,z_n))$ be local coordinates and $D=D'^h+\overline{\partial}$ be the Chern connection of $(E,h)$. 
The Chern curvature tensor $\Theta_{E,h}=D^2=[D'^h,\overline{\partial}]$ is a $(1,1)$-form and is written as
\begin{align*}
    \Theta_{E,h}=\sum \Theta^h_{jk}dz_j\wedge d\overline{z}_k,
\end{align*}
where the coefficients $\Theta^h_{jk}=[D'^h_{z_j},\overline{\partial}_{z_k}]$ are defined operators on $U$ and $\overline{\partial}_{z_j}=\partial/\partial\overline{z}_j$.

The smooth Hermitian metric $h$ on $E$ is said to be Griffiths (semi)-positive if for any section $u$ of $E$ and any vector $v\in\mathbb{C}^n$ we have 
\begin{align*}
    \sum_{1\leq j,k\leq n}(\Theta^h_{jk}u,u)_hv_j\overline{v}_k>0 \quad (\geq0).
\end{align*}
And $h$ is said to be Nakano (semi)-positive if for any sections $u_j$ of $E$ we have 
\begin{align*}
    \sum_{1\leq j,k\leq n}(\Theta^h_{jk}u_j,u_k)_h>0 \quad (\geq0).
\end{align*}

There is a natural antilinear isometry between $E^*$ and $E$, which we will denote by $J$.
Denote the pairing between $E^*$ and $E$ by $\langle\cdot,\cdot\rangle$.
For any local section $u$ of $E$ and any local section $\xi$ of $E^*$, we have 
\begin{align*}
    \langle\xi,u\rangle=(u,J\xi)_h.
\end{align*}

Under the natural holomorphic structure on $E^*$, we get
\begin{align*}
    \overline{\partial}_{z_j}\xi=J^{-1}D'^h_{z_j}J\xi,
\end{align*}
and the Chern connection on $E^*$ is given by 
\begin{align*}
    D'^{h^*}_{z_j}\xi=J^{-1}\overline{\partial}_{z_j}J\xi.
\end{align*}
Then we have that 
\begin{align*}
    \overline{\partial}_{z_j}\langle\xi,u\rangle&=\langle\overline{\partial}_{z_j}\xi,u\rangle+\langle\xi,\overline{\partial}_{z_j}u\rangle,\\
    \partial_{z_j}\langle\xi,u\rangle&=\langle D'^{h^*}_{z_j}\xi,u\rangle+\langle\xi,D'^h_{z_j}u\rangle,\\
    0=[\partial_{z_j},\overline{\partial}_{z_k}]\langle\xi,u\rangle&=\langle\Theta^{h^*}_{jk}\xi,u\rangle+\langle\xi,\Theta^h_{jk}u\rangle.
\end{align*}

Therefore for any local sections $\xi_j\in C^\infty(E^*)$ and $u_j\in C^\infty(E)$ such that $u_j=J\xi_j$, we have that
\begin{align*}
    \sum(\Theta^{h^*}_{jk}\xi_j,\xi_k)_{h^*}=-\sum(\Theta^h_{jk}u_k,u_j)_h,
\end{align*}
and for any local sections $u,v\in C^\infty(E)$, we have that 
\begin{align*}
    \overline{\partial}_{z_k}\partial_{z_j}(u,v)_h
    =(D'^h_{z_j}u,D'^h_{z_k}v)_h+(\overline{\partial}_{z_k}D'^h_{z_j}u,v)_h
    +(u,D'^h_{z_k}\overline{\partial}_{z_j}v)_h+(\overline{\partial}_{z_k}u,\overline{\partial}_{z_j}v)_h.
\end{align*}

If $u$ is holomorphic, then $-\overline{\partial}_{z_k}D'^h_{z_j}u=\Theta^h_{jk}u$. 
Thus for any local sections $u,v\in\mathcal{O}(E)_x$, we get 
    \begin{align*}
        \frac{\partial^2}{\partial z_j\partial\overline{z}_k}(u,v)_h=(D'^h_{z_j}u,D'^h_{z_k}v)_h-(\Theta^h_{jk}u,v)_h.
    \end{align*}
In particular, if $u,v\in\mathcal{O}(E)_x$ satisfying $D'^hu=D'^hv=0$ at $x$ then 
\begin{align*}
    \partial_{z_j}\overline{\partial}_{z_k}(u,v)_h=-(\Theta^h_{jk}u,v)_h \quad \mathrm{at}\,\,x.
\end{align*}

Let $u=(u_1,\cdots,u_n)$ be an $n$-tuple of local holomorphic sections of $E$, i.e. $u_j\in\mathcal{O}(E)$. We define $T^h_u$, an $(n-1,n-1)$-form through 
\begin{align*}
    T^h_u:=\sum(u_j,u_k)_h\widehat{dz_j\wedge d\overline{z}_k}
\end{align*}
where $(z_1,\cdots,z_n)$ are local coordinates on $X$, and $\widehat{dz_j\wedge d\overline{z}_k}$ denotes the wedge product of all $dz_l$ and $d\overline{z}_l$ expect $dz_j$ and $d\overline{z}_k$, 
multiplied by a constant of absolute value $1$, i.e. $idz_j\wedge d\overline{z}_k\wedge \widehat{dz_j\wedge d\overline{z}_k}=dV_{\mathbb{C}^n}$.
Hence, if $D'^hu_j=0$ at $x$ then we get 
\begin{align*}
    \idd T^h_u=-\sum(\Theta^h_{jk}u_j,u_k)_hdV_{\mathbb{C}^n},
\end{align*}
at $x$ by the equation 
\begin{align*}
    \idd T^h_u&=\sum(D'^h_{z_j}u_j,D'^h_{z_k}u_k)_hdV_{\mathbb{C}^n}-\sum(\Theta^h_{jk}u_j,u_k)_hdV_{\mathbb{C}^n}\\
    &=||\sum D'^h_{z_j}u_j||^2_h-\sum(\Theta^h_{jk}u_j,u_k)_hdV_{\mathbb{C}^n}.
\end{align*}

\begin{proposition}\label{smooth case Nak semi and T}$(\mathrm{cf.}$\,\cite{Ber09},\,\cite{Rau15}$)$
    We have that 
    \begin{itemize}
        \item $h$ is Nakano semi-positive if and only if for any $x\in X$ and any $u_j\in\mathcal{O}(E)_x$ such that $D'^hu_j=0$ at $x$, the $(n-1,n-1)$-form $-T^h_u$ is plurisubharmonic at $x$, i.e. $-\idd T^h_u\geq0$.
        \item $h$ is Nakano semi-negative if and only if for any $x\in X$ and any $u_j\in\mathcal{O}(E)_x$, the $(n-1,n-1)$-form $T^h_u$ is plurisubharmonic at $x$, i.e. $\idd T^h_u\geq0$.
    \end{itemize}
\end{proposition}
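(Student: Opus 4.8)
The plan is to prove both equivalences by reducing everything to the pointwise identity
$$\idd T^h_u = \Bigl\lVert \sum D'^h_{z_j}u_j \Bigr\rVert^2_h \, dV_{\mathbb{C}^n} - \sum (\Theta^h_{jk}u_j,u_k)_h \, dV_{\mathbb{C}^n}$$
established just above the statement, together with the elementary fact that $\idd$ of an $(n-1,n-1)$-form is a real $(n,n)$-form, hence a real multiple of $dV_{\mathbb{C}^n}$, and that plurisubharmonicity of an $(n-1,n-1)$-form $S$ at a point means precisely $\idd S \geq 0$ there.

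First I would prove the first bullet. For the ``only if'' direction: assume $h$ is Nakano semi-positive, fix $x$ and sections $u_j \in \mathcal{O}(E)_x$ with $D'^h u_j = 0$ at $x$; then at $x$ the first term on the right-hand side of the identity vanishes, so $\idd T^h_u = -\sum(\Theta^h_{jk}u_j,u_k)_h \, dV_{\mathbb{C}^n} \leq 0$, i.e. $-\idd T^h_u \geq 0$, which is exactly plurisubharmonicity of $-T^h_u$ at $x$. For the ``if'' direction: fix $x$ and arbitrary $u_j \in \mathcal{O}(E)_x$; the key point is that one can modify each $u_j$ by adding a holomorphic section vanishing to high order so as to arrange $D'^h u_j = 0$ at $x$ without changing the value $u_j(x)$ — concretely, using a local holomorphic frame, subtract the linear correction $\sum_l z_l \, (D'^h_{z_l}u_j)(x)$ expressed in that frame, which is holomorphic and has vanishing first derivative of the metric coefficients absorbed appropriately; since $(\Theta^h_{jk}u_j,u_k)_h$ at $x$ depends only on the values $u_j(x)$, the hypothesis $-\idd T^h_u \geq 0$ applied to the corrected sections gives $\sum(\Theta^h_{jk}u_j(x),u_k(x))_h \geq 0$, and as $x$ and the values $u_j(x)$ are arbitrary this is Nakano semi-positivity.

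For the second bullet I would argue via the dual bundle $(E^*, h^*)$, exploiting the curvature duality identity $\sum(\Theta^{h^*}_{jk}\xi_j,\xi_k)_{h^*} = -\sum(\Theta^h_{jk}u_k,u_j)_h$ for $u_j = J\xi_j$ recorded in the excerpt: $h$ is Nakano semi-negative if and only if $h^*$ is Nakano semi-positive. Then apply the first bullet to $(E^*,h^*)$ and translate the $T$-forms back through $J$, noting that the isometry $J$ identifies $(\xi_j,\xi_k)_{h^*}$ with $(u_k,u_j)_h$ and hence $T^{h^*}_\xi$ with the form built from $(u_k,u_j)_h$; the one subtlety is matching the index order and conjugation in the definition of $T$, but since $T^h_u$ is built from the Hermitian matrix $(u_j,u_k)_h$ and passing to $h^*$ transposes-and-conjugates this matrix, the $(n-1,n-1)$-form is the same up to the reality already built into $\widehat{dz_j \wedge d\overline z_k}$. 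Under this dictionary ``$-T^{h^*}_\xi$ plurisubharmonic'' for $\xi_j$ with $D'^{h^*}\xi_j = 0$ becomes ``$T^h_u$ plurisubharmonic'' for the corresponding $u_j$; finally observe that the condition $D'^{h^*}\xi_j = 0$ at $x$ is no restriction here because on the dual side the bullet statement quantifies over sections with that vanishing, while after translation it corresponds to an honest constraint that, as in the ``if'' direction above, can be removed since $T^h_u$ at $x$ depends only on the values $u_j(x)$ — so in fact the statement is simply ``for all $u_j \in \mathcal{O}(E)_x$''.

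The main obstacle is the normalization-of-frame step used in both ``if'' directions: one must verify carefully that, given arbitrary germs $u_j$, the corrected germs $\tilde u_j$ with $D'^h \tilde u_j(x) = 0$ can be chosen holomorphically and with $\tilde u_j(x) = u_j(x)$, and that the quantity $\idd T^h_{\tilde u}$ at $x$ genuinely detects $\sum(\Theta^h_{jk}\tilde u_j,\tilde u_k)_h$ — this is where the identity $\partial_{z_j}\overline\partial_{z_k}(u,v)_h = -(\Theta^h_{jk}u,v)_h$ at a point where $D'^h u = D'^h v = 0$ does the work, but one should check it assembles correctly into the $(n-1,n-1)$-form language rather than just the scalar pairing. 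Everything else is bookkeeping with the duality identities already in the excerpt.
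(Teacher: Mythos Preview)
Your treatment of the first bullet is correct and is essentially the argument implicit in the paper: the displayed identity
\[
\idd T^h_u=\Bigl\lVert\sum D'^h_{z_j}u_j\Bigr\rVert^2_h\,dV_{\mathbb{C}^n}-\sum(\Theta^h_{jk}u_j,u_k)_h\,dV_{\mathbb{C}^n}
\]
gives the ``only if'' direction immediately once $D'^hu_j=0$ at $x$, and the ``if'' direction follows after the normal-frame correction you describe.

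The second bullet, however, has a genuine gap. Your key step ``$h$ is Nakano semi-negative if and only if $h^*$ is Nakano semi-positive'' is false in general: the curvature identity you quote reads $\sum(\Theta^{h^*}_{jk}\xi_j,\xi_k)_{h^*}=-\sum(\Theta^h_{jk}u_k,u_j)_h$, so Nakano semi-positivity of $h^*$ is equivalent to $\sum(\Theta^h_{jk}u_k,u_j)_h\le0$ for all $(u_j)$, which is \emph{dual} Nakano semi-negativity of $h$, not Nakano semi-negativity. (Concretely, $T_{\mathbb{P}^n}$ with the Fubini--Study metric is dual Nakano positive but not Nakano positive, so these two notions genuinely differ.) Correspondingly, under $u_j=J\xi_j$ one finds $(\xi_j,\xi_k)_{h^*}=(u_k,u_j)_h$, so $T^{h^*}_\xi$ becomes $\widetilde{T}^h_u$, not $T^h_u$; your dictionary lands on the wrong form.

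The fix is to abandon duality and argue directly from the displayed identity. If $h$ is Nakano semi-negative then for \emph{any} holomorphic $u_j$ both terms on the right are $\ge0$, hence $\idd T^h_u\ge0$ with no constraint on $D'^hu_j$; this is exactly why the second bullet drops that hypothesis. Conversely, given $\idd T^h_u\ge0$ for all $u_j$, specialize to sections with $D'^hu_j=0$ at $x$ (using the same normal-frame correction as in the first bullet) to obtain $-\sum(\Theta^h_{jk}u_j,u_k)_h\ge0$ at $x$.
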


We introduce another notion about Nakano-type positivity.

\begin{definition}$(\mathrm{cf.~}$\cite{Siu82},\,\cite{LSY13})   
    Let $X$ be a complex manifold of complex dimension $n$ and $(E,h)$ be a holomorphic Hermitian vector bundle of rank $r$ over $X$.
    $(E,h)$ is said to be $\it{dual~Nakano ~positive}$ (resp. $\it{dual~Nakano ~semi}$-$\it{positive}$) if $(E^*,h^*)$ is Nakano negative (resp. Nakano semi-negative).
\end{definition}

Let $\xi_j\in C^\infty(E^*)$ and $u_j\in C^\infty(E)$ be $r$-tuples of smooth sections $E^*$ such that $u_j=J\xi_j$.
If $h$ is dual Nakano semi-positive then 
\begin{align*}
    0\geq\sum(\Theta^{h^*}_{jk}\xi_j,\xi_k)_{h^*}=-\sum(\Theta^h_{jk}u_k,u_j)_h,
\end{align*}
i.e. $\sum(\Theta^h_{jk}u_k,u_j)_h\geq0$.
Enough to consider at each point, for any $x\in X$ and any $u_j\in C^\infty(E)_x$ if $\sum(\Theta^h_{jk}u_k,u_j)_h\geq0$ at $x$ then $h$ is dual Nakano semi-positive.
Hence, we have that $h$ is dual Nakano semi-positive if and only if $\sum(\Theta^h_{jk}u_k,u_j)_h\geq0$ at any points $x$, for any $u_j\in C^\infty(E)_x$.

\begin{definition}\label{def of T for dual Nakano}
    Let $u=(u_1,\cdots,u_n)$ be an $n$-tuple of local holomorphic sections of $E$, i.e. $u_j\in\mathcal{O}(E)$. We define $\widetilde{T}^h_u$, an $(n-1,n-1)$-form through 
    \begin{align*}
        \widetilde{T}^h_u:=\sum(u_k,u_j)_h\widehat{dz_j\wedge d\overline{z}_k}
    \end{align*}
    where $(z_1,\cdots,z_n)$ are local coordinates on $X$.
\end{definition}

\begin{proposition}\label{d Nak posi if and only if T condition}
    $h$ is dual Nakano semi-positive if and only if for any $x\in X$ and any $u_j\in\mathcal{O}(E)_x$ such that $D'^hu_j=0$ at $x$, 
    the $(n-1,n-1)$-form $-\widetilde{T}^h_u$ is plurisubharmonic at $x$, i.e. $-\idd \widetilde{T}^h_u\geq0$.
\end{proposition}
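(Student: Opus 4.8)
The plan is to follow, \emph{mutatis mutandis}, the proof of Proposition~\ref{smooth case Nak semi and T}, replacing the $(n-1,n-1)$-form $T^h_u$ by $\widetilde{T}^h_u$ from Definition~\ref{def of T for dual Nakano} and the pointwise characterization of Nakano semi-positivity by the one for dual Nakano semi-positivity recorded in the paragraph just before Definition~\ref{def of T for dual Nakano}, namely that $h$ is dual Nakano semi-positive if and only if $\sum_{j,k}(\Theta^h_{jk}u_k,u_j)_h\ge 0$ at every $x$ for all $u_j\in C^\infty(E)_x$.

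First I would record the curvature identity for $\widetilde{T}^h_u$. For an $n$-tuple $u=(u_1,\dots,u_n)$ of local holomorphic sections of $E$, the same computation that produced the formula for $\idd T^h_u$ above --- using $\idd(f\,\widehat{dz_j\wedge d\overline{z}_k})=\partial_{z_j}\overline{\partial}_{z_k}f\cdot dV_{\mathbb{C}^n}$ together with the Bochner-type identity $\overline{\partial}_{z_k}\partial_{z_j}(v,w)_h=(D'^h_{z_j}v,D'^h_{z_k}w)_h-(\Theta^h_{jk}v,w)_h$ for holomorphic $v,w$, now applied with $v=u_k$ and $w=u_j$ --- gives
\[
\idd\widetilde{T}^h_u=\sum_{j,k}(D'^h_{z_j}u_k,D'^h_{z_k}u_j)_h\,dV_{\mathbb{C}^n}-\sum_{j,k}(\Theta^h_{jk}u_k,u_j)_h\,dV_{\mathbb{C}^n}.
\]
Hence if $D'^hu_j=0$ at $x$ for every $j$, the first sum drops out and $-\idd\widetilde{T}^h_u=\sum_{j,k}(\Theta^h_{jk}u_k,u_j)_h\,dV_{\mathbb{C}^n}$ at $x$; I would also note in passing that this quantity is real, since $(\Theta^h_{jk})^*=\Theta^h_{kj}$ together with the relabeling $j\leftrightarrow k$ shows it equals its own conjugate, so the inequality is meaningful.

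Given this identity, both implications are immediate. If $h$ is dual Nakano semi-positive, the characterization above makes the right-hand side nonnegative, so $-\idd\widetilde{T}^h_u\ge 0$ at $x$ for all holomorphic $u_j$ with $D'^hu_j=0$ at $x$. Conversely, to prove dual Nakano semi-positivity it suffices to show $\sum_{j,k}(\Theta^h_{jk}u_k,u_j)_h\ge 0$ at each $x$ for arbitrary $u_j\in C^\infty(E)_x$; since $\Theta^h_{jk}$ is a pointwise endomorphism this depends only on the values $u_j(x)$, and choosing a holomorphic frame of $E$ normal at $x$ one extends the vectors $u_j(x)$ to holomorphic sections $\widetilde{u}_j$ with $D'^h\widetilde{u}_j=0$ at $x$, to which the hypothesis applies via the identity above. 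The argument is essentially bookkeeping; the only point needing slight care --- and the reason this does not follow verbatim from Proposition~\ref{smooth case Nak semi and T} --- is that the cross term $\sum_{j,k}(D'^h_{z_j}u_k,D'^h_{z_k}u_j)_h$ coming from $\widetilde{T}^h_u$ is not a manifest square (unlike $\|\sum_j D'^h_{z_j}u_j\|_h^2$ for $T^h_u$), so it must be discarded using the normalization $D'^hu_j=0$ at $x$ rather than any positivity, and one must verify that the index swap in the definition of $\widetilde{T}^h_u$ delivers precisely the dual-Nakano pairing $\sum(\Theta^h_{jk}u_k,u_j)_h$. I expect no genuine obstacle beyond this.
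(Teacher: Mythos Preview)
Your proposal is correct and follows essentially the same approach as the paper: both hinge on the identity $\idd\widetilde{T}^h_u=-\sum_{j,k}(\Theta^h_{jk}u_k,u_j)_h\,dV_{\mathbb{C}^n}$ at $x$ when $D'^hu_j=0$ there, combined with the pointwise characterization of dual Nakano semi-positivity established just before Definition~\ref{def of T for dual Nakano}. The paper's proof is a one-line invocation of this identity (together with the relation to $\sum(\Theta^{h^*}_{jk}\xi_j,\xi_k)_{h^*}$ via $J$), whereas you additionally spell out the Bochner computation, the non-square cross term, and the normal-frame extension for the converse, but the substance is the same.
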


\begin{proof}
This yields the following calculation,
\begin{align*}
    0\geq\idd \widetilde{T}^h_u=-\sum(\Theta^h_{jk}u_k,u_j)_hdV_{\mathbb{C}^n}=\sum(\Theta^{h^*}_{jk}\xi_j,\xi_k)_{h^*}dV_{\mathbb{C}^n},
\end{align*}
where $\xi_j:=J^{-1}u_j\in\mathcal{E}(E^*)_x$.
\end{proof}

By using this proposition, we can examine dual Nakano semi-positivity of $h$ without using the dual metric $h^*$.
Finally, we introduce the H\"ormander's $L^2$-existence theorem. 

\begin{theorem}\label{Hormander L2-estimate}$(\mathrm{cf.~[Dem}$-$\mathrm{book,~ChapterVIII,~Theorem~6.1]})$
    Let $(X,\widehat{\omega})$ be a complete \kah manifold, $\omega$ be another \kah metric which is not necessarily complete and $(E,h)$ be a holomorphic vector bundle which satisfies $A_{h,\omega}:=[i\Theta_{E,h},\Lambda_\omega]\geq0$ on $\Lambda^{n,q}T^*_X\otimes E$.
    Then for any $\overline{\partial}$-closed $f\in L^2_{n,q}(X,E,h,\omega)$ there exists $u\in L^2_{n,q-1}(X,E,h,\omega)$ satisfies $\overline{\partial}u=f$ and 
    \begin{align*}
        \int_X|u|^2_{h,\omega}dV_{\omega}\leq\int_X\langle A^{-1}_{h,\omega}f,f\rangle_{h,\omega}dV_{\omega},
    \end{align*}
    where we assume that the right-hand side is finite.
\end{theorem}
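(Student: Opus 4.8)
The statement is the classical H\"ormander--Andreotti--Vesentini $L^2$-estimate in the form that tolerates a non-complete base metric, and the plan is to follow the route in Demailly's book. The first step is to absorb the non-completeness of $\omega$ by working with the auxiliary family $\omega_\varepsilon:=\omega+\varepsilon\widehat\omega$ for $\varepsilon>0$: each $\omega_\varepsilon$ is again a \kah metric, and it is complete because $\omega_\varepsilon\geq\varepsilon\widehat\omega$ and $\widehat\omega$ is complete. This is the only role of the hypothesis that $X$ carries some complete \kah metric, and all the subsequent analysis will be performed for a fixed $\varepsilon$ and then pushed to the limit.

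For a fixed $\varepsilon$ I would run the standard Hilbert-space argument with $\omega_\varepsilon$. By the Bochner--Kodaira--Nakano identity (and since $\partial_E$ raises the holomorphic degree past $n$, hence kills $(n,q)$-forms), every compactly supported smooth $(n,q)$-form $s$ with values in $E$ satisfies
\[
\|\overline{\partial} s\|^2_{h,\omega_\varepsilon}+\|\overline{\partial}^*s\|^2_{h,\omega_\varepsilon}\ \geq\ \int_X\langle A_{h,\omega_\varepsilon}\,s,s\rangle_{h,\omega_\varepsilon}\,dV_{\omega_\varepsilon},
\]
where $A_{h,\omega_\varepsilon}=[i\Theta_{E,h},\Lambda_{\omega_\varepsilon}]\geq0$ and $\overline{\partial}^*$ is the formal adjoint for $h$ and $\omega_\varepsilon$. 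Completeness of $\omega_\varepsilon$ lets one extend this a priori inequality from $\mathcal{D}^{n,q}(X,E)$ to all forms in $\mathrm{Dom}(\overline{\partial})\cap\mathrm{Dom}(\overline{\partial}^*)$ by density in the graph norm. Writing a test form as $s=s_1+s_2$ with $s_1\in\ker\overline{\partial}$ and $s_2\in(\ker\overline{\partial})^\perp$, and using $\overline{\partial}f=0$, Cauchy--Schwarz applied to the (possibly degenerate) quadratic form $\langle A_{h,\omega_\varepsilon}\cdot,\cdot\rangle$ together with the inequality above gives
\[
|\langle f,s\rangle|^2=|\langle f,s_1\rangle|^2\leq\Big(\int_X\langle A_{h,\omega_\varepsilon}^{-1}f,f\rangle_{h,\omega_\varepsilon}\,dV_{\omega_\varepsilon}\Big)\,\|\overline{\partial}^*s\|^2_{h,\omega_\varepsilon}.
\]
Hahn--Banach followed by the Riesz representation theorem then produces $u_\varepsilon\in L^2_{n,q-1}(X,E,h,\omega_\varepsilon)$ with $\overline{\partial}u_\varepsilon=f$ and $\int_X|u_\varepsilon|^2_{h,\omega_\varepsilon}\,dV_{\omega_\varepsilon}\leq\int_X\langle A_{h,\omega_\varepsilon}^{-1}f,f\rangle_{h,\omega_\varepsilon}\,dV_{\omega_\varepsilon}$.

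To pass to the limit $\varepsilon\downarrow0$ I would invoke the pointwise monotonicity that is special to bidegree $(n,q)$: if $\omega_\varepsilon\geq\omega$ then $\langle A_{h,\omega_\varepsilon}^{-1}f,f\rangle_{h,\omega_\varepsilon}\,dV_{\omega_\varepsilon}\leq\langle A_{h,\omega}^{-1}f,f\rangle_{h,\omega}\,dV_{\omega}$, checked by diagonalizing $i\Theta_{E,h}$ with respect to $\omega$ at a point. Hence the right-hand side of the estimate for $u_\varepsilon$ is bounded, uniformly in $\varepsilon$, by the finite number $\int_X\langle A_{h,\omega}^{-1}f,f\rangle_{h,\omega}\,dV_{\omega}$; and using the analogous bidegree-$(n,q)$ comparison of norms on relatively compact subsets, the family $\{u_\varepsilon\}$ is bounded in $L^2_{\mathrm{loc}}$. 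Extracting a weakly convergent subsequence $u_\varepsilon\rightharpoonup u$, one gets $\overline{\partial}u=f$ in the sense of distributions, and weak lower semicontinuity of the $\omega$-norm, combined with monotone convergence of the right-hand side as $\varepsilon\downarrow0$, yields the asserted bound $\int_X|u|^2_{h,\omega}\,dV_{\omega}\leq\int_X\langle A_{h,\omega}^{-1}f,f\rangle_{h,\omega}\,dV_{\omega}$.

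The two points that demand genuine care — and which I expect to be the main obstacle — are, first, the density of $\mathcal{D}^{n,q}(X,E)$ in the graph norm of $(\overline{\partial},\overline{\partial}^*)$, which is exactly where completeness of $\omega_\varepsilon$ is essential and is the reason the auxiliary metric $\widehat\omega$ cannot be dropped; and second, the pointwise lemma $\langle A_{h,\omega_\varepsilon}^{-1}f,f\rangle_{h,\omega_\varepsilon}\,dV_{\omega_\varepsilon}\leq\langle A_{h,\omega}^{-1}f,f\rangle_{h,\omega}\,dV_{\omega}$, whose proof really uses that the form has bidegree $(n,q)$ (it fails in general bidegree), together with the bookkeeping needed to compare the $\varepsilon$-dependent $L^2$-norms when taking the weak limit.
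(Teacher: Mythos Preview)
Your outline is the standard Demailly proof and is essentially correct; the paper itself gives no argument for this theorem, merely citing it from \textnormal{[Dem-book, Chapter\,VIII, Theorem\,6.1]}, so your route is exactly the one the citation points to. There is nothing to compare: you have reproduced the intended proof.
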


\begin{lemma}\label{DNWZ22, Lemma4.7}$\mathrm{(cf.\,[DNWZ22,\,Lemma\,4.7]})$
    Let $U\subset\mathbb{C}^n$ be a domain, $\omega_1,\omega_2$ be any two Hermitian forms on $U$, and $E=U\times\mathbb{C}^r$ be trivial vector bundle on $U$ with a Hermitian metric.
    Let $\Theta\in C^0(X,\Lambda^{1,1}T^*_X\otimes\mathrm{End}(E))$ such that $\Theta^*=-\Theta$. Then 
    \begin{align*}
        \mathrm{Im}[i\Theta,\Lambda_{\omega_1}]=\mathrm{Im}[i\Theta,\Lambda_{\omega_2}],
    \end{align*}
    and for any $E$-valued $(n,1)$-form $u\in\mathrm{Im}[i\Theta,\Lambda_{\omega_1}]$,
    \begin{align*}
        \langle[i\Theta,\Lambda_{\omega_1}]^{-1}u,u\rangle_{\omega_1}dV_{\omega_1}=\langle[i\Theta,\Lambda_{\omega_2}]^{-1}u,u\rangle_{\omega_2}dV_{\omega_2}.
    \end{align*}
\end{lemma}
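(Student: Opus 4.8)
The plan is to prove both assertions \emph{fibrewise}: fix $p\in U$ and work inside $\Lambda^{n,1}T^*_{U,p}\otimes E_p$. Since $\Lambda_\omega$, the induced fibrewise inner product $\langle\cdot,\cdot\rangle_\omega$ on $\Lambda^{n,1}T^*_U\otimes E$, and the volume element $dV_\omega$ are all built invariantly from the Hermitian metric, the operator $[i\Theta,\Lambda_\omega]$, its image, and its inverse on that image do not change under a holomorphic coordinate change. So I would first choose linear coordinates $(z_1,\dots,z_n)$ at $p$ which simultaneously diagonalize the two metrics (possible since both are positive Hermitian forms), say $\omega_1=i\sum_j dz_j\wedge d\bar z_j$ and $\omega_2=i\sum_j\lambda_j\,dz_j\wedge d\bar z_j$ with $\lambda_j>0$ at $p$, and write $i\Theta=i\sum_{j,k}\Theta_{jk}\,dz_j\wedge d\bar z_k$ at $p$, so that the hypothesis $\Theta^*=-\Theta$ becomes $\Theta_{jk}=\Theta_{kj}^*$ in $\mathrm{End}(E_p)$.

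Next I would make the operator explicit. Identifying an $(n,1)$-form $u=\sum_j (dz_1\wedge\cdots\wedge dz_n)\wedge d\bar z_j\otimes u_j$ with the tuple $\mathbf u=(u_1,\dots,u_n)\in E_p^{\oplus n}$, the pointwise formula for $[i\Theta,\Lambda_\omega]$ acting on $(n,1)$-forms gives, for a diagonal $\omega=i\sum_j\mu_j\,dz_j\wedge d\bar z_j$,
\begin{align*}
    \big([i\Theta,\Lambda_\omega]u\big)_k=\sum_j\mu_j^{-1}\Theta_{jk}u_j,\quad
    \langle u,v\rangle_\omega=\Big(\textstyle\prod_l\mu_l\Big)^{-1}\sum_j\mu_j^{-1}(u_j,v_j)_h,\quad
    dV_\omega=\Big(\textstyle\prod_l\mu_l\Big)dV_{\mathbb{C}^n}.
\end{align*}
Thus, setting $(\mathcal C\mathbf u)_k:=\sum_j\Theta_{jk}u_j$ and $M:=\mathrm{diag}(\mu_1,\dots,\mu_n)$, the operator $[i\Theta,\Lambda_\omega]$ is $\mathcal C\circ M^{-1}$ under this identification, and $\Theta_{jk}=\Theta_{kj}^*$ says exactly that $\mathcal C$ is self-adjoint for $(\mathbf u,\mathbf v):=\sum_j(u_j,v_j)_h$; equivalently, $[i\Theta,\Lambda_\omega]$ is self-adjoint for $\langle\cdot,\cdot\rangle_\omega$. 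Because $M_1=\mathrm{Id}$ and $M_2=\mathrm{diag}(\lambda_j)$ are invertible, $\mathrm{Im}[i\Theta,\Lambda_{\omega_i}]=\mathrm{Im}(\mathcal C\circ M_i^{-1})=\mathrm{Im}(\mathcal C)$ for $i=1,2$, which proves the first assertion.

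For the second assertion, take $u\in\mathrm{Im}(\mathcal C)$ and fix, once and for all (independently of $\omega$), some $\mathbf p\in E_p^{\oplus n}$ with $\mathcal C\mathbf p=\mathbf u$. For a diagonal $\omega$ as above, $M\mathbf p$ is a preimage of $u$ under $\mathcal C\circ M^{-1}=[i\Theta,\Lambda_\omega]$, so the displayed formulas give
\begin{align*}
    \langle[i\Theta,\Lambda_\omega]^{-1}u,u\rangle_\omega\,dV_\omega
    =\langle M\mathbf p,\mathbf u\rangle_\omega\,dV_\omega
    =\Big(\textstyle\prod_l\mu_l\Big)^{-1}(\mathbf p,\mathbf u)\cdot\Big(\textstyle\prod_l\mu_l\Big)dV_{\mathbb{C}^n}
    =(\mathbf p,\mathbf u)\,dV_{\mathbb{C}^n},
\end{align*}
which is manifestly independent of $\omega$; it is also independent of the chosen preimage, since any two preimages of $u$ differ by an element of $\ker[i\Theta,\Lambda_\omega]$, which is $\langle\cdot,\cdot\rangle_\omega$-orthogonal to $u\in\mathrm{Im}[i\Theta,\Lambda_\omega]$ by self-adjointness. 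Evaluating at $\omega_1$ and $\omega_2$ then yields the asserted equality of the two weighted pairings.

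The genuine obstacle is the bookkeeping in the middle step: getting the formula for $[i\Theta,\Lambda_\omega]$ on $(n,1)$-forms with the correct powers of $\mu_j$ and $\prod_l\mu_l$, and checking that the factor $\prod_l\mu_l$ produced by $dV_\omega$ cancels exactly the one in $\langle\cdot,\cdot\rangle_\omega$; a clean way to verify this is the rescaling $\zeta_l=\sqrt{\mu_l}\,z_l$, which reduces the diagonal case to the Euclidean one. The one conceptual point not to overlook is that $[i\Theta,\Lambda_\omega]$ is in general degenerate, so $[i\Theta,\Lambda_\omega]^{-1}u$ denotes the inverse taken on the image, and its pairing with $u$ is well defined precisely because of the self-adjointness encoded in the hypothesis $\Theta^*=-\Theta$.
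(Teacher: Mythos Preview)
The paper does not prove this lemma; it is quoted verbatim from [DNWZ22, Lemma 4.7] without argument, so there is no ``paper's own proof'' to compare against.

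Your proof is correct and is essentially the standard one. The decisive observation is that in linear coordinates simultaneously diagonalizing $\omega_1$ and $\omega_2$ at a fixed point, the operator $[i\Theta,\Lambda_\omega]$ on $(n,1)$-forms factors as $\mathcal C\circ M^{-1}$ with $\mathcal C$ (built from the $\Theta_{jk}$ alone) independent of $\omega$ and $M=\mathrm{diag}(\mu_j)$ invertible; hence $\mathrm{Im}[i\Theta,\Lambda_\omega]=\mathrm{Im}\,\mathcal C$ for either metric. Your computation of the pairing is also right: the factor $\prod_l\mu_l$ from $dV_\omega$ cancels the one in $\langle\cdot,\cdot\rangle_\omega$, and the remaining $\mu_j^{-1}$ in the inner product is absorbed by the $\mu_j$ in $M\mathbf p$, leaving the metric-independent quantity $(\mathbf p,\mathbf u)\,dV_{\mathbb C^n}$. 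The self-adjointness of $[i\Theta,\Lambda_\omega]$ for $\langle\cdot,\cdot\rangle_\omega$, which you correctly trace to $\Theta_{jk}=\Theta_{kj}^*$, is exactly what makes the pairing independent of the chosen preimage. The rescaling $\zeta_l=\sqrt{\mu_l}\,z_l$ you mention is indeed the cleanest way to check the explicit formula for $[i\Theta,\Lambda_\omega]$ on $(n,1)$-forms and avoids sign headaches.
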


\section{Dual Nakano positivity of direct image sheaves}

\subsection{Smooth canonical Hermitian metric of direct image sheaves}

Let $X$ be a \kah manifold of dimension $m+n$ and $Y$ be a complex manifold of dimension $m$.
We consider a proper holomorphic submersion $f:X\to Y$. 
The \textit{relative canonical bundle} $K_{X/Y}$ corresponding to the map $f$ is 
\begin{align*}
    K_{X/Y}=K_X\otimes f^*K_Y^{-1}.
\end{align*}
When restricted to a generic fiber $X_t$ of $t$, we get $K_{X/Y}|_{X_t}\cong K_{X_t}$.

Let $L$ be a holomorphic Hermitian line bundle over $X$ equipped with a smooth semi-positive Hermitian metric $h$, i.e. $i\Theta_{L,h}\geq0$.
In this subsection, we discuss the complex structure of the direct image sheaf $f_*(K_{X/Y}\otimes L)$ on $Y$ and the smooth canonical Hermitian metric $H$ of this sheaf induced by $h$ (cf. \cite{Ber09}).
Fixed a point $t\in Y$, any section $u\in H^0(X_t,K_{X_t}\otimes L|_{X_t})$ extends in the sense that there is a holomorphic section 
\begin{align*}
    U\in H^0(f^{-1}(\Omega),K_X\otimes L|_{f^{-1}(\Omega)})\cong H^0(\Omega,K_Y\otimes f_*(K_{X/Y}\otimes L))
\end{align*}
such that $U|_{X_t}=u\wedge dt$ for some neighborhood $\Omega$ of $t$ from the Ohsawa-Takegoshi $L^2$-extension theorem (cf. \cite{OT87}) and K\"ahler-ness of $X$.
Here, we abusively denote by $dt$ the inverse image of a local generator $dt_1\wedge\cdots\wedge dt_m$ of $K_Y$.
In [Ber09], it was claimed that the total space
\begin{align*}
    F:=\bigcup_{t\in Y}H^0(X_t,K_{X_t}\otimes L|_{X_t})
\end{align*} 
has a natural structure of holomorphic vector bundle of rank $r:=h^0(X_t,K_{X_t}\otimes L_{X_t})$ over $Y$ and coincides with the direct image $f_*(K_{X/Y}\otimes L)$.
Therefore, the space of local \textit{smooth} sections of $F|_\Omega$ are simply the sections of the bundle $K_{X/Y}\otimes L|_{f^{-1}(\Omega)}$ whose restriction to each fiber of $f$ is holomorphic.

The vector bundle $F=f_*(K_{X/Y}\otimes L)$ admits a natural \textit{complex structure} as follows.
Let $u$ be a local section of $E$ then $u$ is holomorphic if 
\begin{align*}
    \overline{\partial}u\wedge dt=0.
\end{align*}
This is equivalent to saying that the section $u\wedge dt$ of $K_X\otimes L$ is holomorphic.

Note that $u$ is holomorphic, i.e. $\overline{\partial}u\wedge dt=0$, which means that $\overline{\partial}u$ can be written 
\begin{align*}
    \overline{\partial}u=\sum \eta^j\wedge dt_j,
\end{align*}
with $\eta^j$ smooth forms of bidegree $(n-1,1)$. 
Here, the following relationship is known (see \cite{Ber09}) between $\eta^j$ and the Kodaira-Spencer map $\rho_t:T^{1,0}_{Y,t}\to H^{0,1}(X_t,T^{1,0}_{X_t})$:
\begin{align*}
    \eta^j=\theta_j \rfloor u,
\end{align*}
on each fiber where the classes $\rho_t(\partial/\partial t_j)$ can be represented by Kodaira-Spencer forms $\theta_j$, i.e. $\{\theta_j\}\in\rho_t(\partial/\partial t_j)$. 

The smooth Hermitian metric $h$ of $L$ induces a smooth \textit{canonical Hermitian metric} $H$ of $F$ as follows.
Let $u,v$ be two local sections of $F$. We denote by $(u_t)$ the family of $L$-twisted holomorphic $(n,0)$-forms on fibers $K_{X_t}$ induced by $u$.
The restriction of $u_t$ to $X_t$ is unique and denoted simply as $u$.
Then the canonical Hermitian metric $H$ induced from $h$ is defined by
\begin{align*}
    (u,v)_H(t):=\int_{X_t}c_nu_t\wedge\overline{v}_te^{-\varphi}=\int_{X_t}c_nu\wedge\overline{v}e^{-\varphi},
\end{align*}
where $h=e^{-\varphi}$ on locally and $c_n=i^{n^2}$. This metric is smooth by Ehresmann's fibration theorem and compact-ness of each fiber.
And this inner product of $H$ is a function of $t$ and it will be convenient to write this function as 
\begin{align*}
    (u,v)_H=f_*(c_nu\wedge\overline{v}e^{-\varphi}),
\end{align*}
where $u$ and $v$ are forms on $X$ that represent the sections.
Here $f_*$ denotes the direct image of form defined by 
\begin{align*}
    \int_Yf_*(\alpha)\wedge\beta=\int_X\alpha\wedge f^*(\beta),
\end{align*}
if $\alpha$ is a form on $X$ and $\beta$ is a form on $Y$.

\subsection{Berndtsson calculation and Nakano positivity}

Let $(t_1,\cdots,t_m)$ be a local coordinate whose center is fixed point $y\in Y$.
Let $u_j$ be an $m$-tuple of local holomorphic sections to $F$ that satisfy $D'^Hu_j=0$ at $y$, i.e. $t=0$.
Represent the $u_j$ by smooth forms on $X$ and put 
\begin{align*}
    \hat{u}:=\sum u_j\wedge\widehat{dt_j}
\end{align*}
then we get 
\begin{align*}
    T_u^H=c_Nf_*(\hat{u}\wedge\overline{\hat{u}}e^{-\varphi}),
\end{align*}
where $N=n+m-1$ and $\widehat{dt_j}$ is the wedge product of all differentials $dt_k$ except $dt_j$ such that $dt_j\wedge\widehat{dt_j}=dt=dt_1\wedge\cdots\wedge dt_m$.

Using the following proposition, Berndtsson computed $\idd T^H_u$ at fixed points.

\begin{proposition}\label{Ber09, Prop4.2}$(\mathrm{cf.~[Ber09,\,Proposition\,4.2]})$
    Let $u$ be a section of $F$ over an open set $U$ containing the origin such that $\overline{\partial}u=0$ in $U$, i.e. holomorphic, and $D'^Hu=0$ at $t=0$.
    Then $u$ can be represented by a smooth $(n,0)$-form, still denoted $u$ such that 
    \begin{align*}
        \overline{\partial}u=\sum\eta^k\wedge dt_k,
    \end{align*}
    where $\eta^k$ is $\mathit{primitive}$ on $X_0$, i.e. satisfies $\eta^k\wedge\omega=0$ on $X_0$, and furthermore
    \begin{align*}
        \partial^\varphi u\wedge\widehat{dt_j}=0,
    \end{align*}
    at $t=0$ for all $j$. Here, $\partial^\varphi\cdot=e^\varphi\partial(e^{-\varphi}\cdot)$.
\end{proposition}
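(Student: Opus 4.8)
The plan is to build the desired representative of $u$ by successive corrections, exploiting the two sources of freedom in a representative of a section of $F$: adding $\overline{\partial}$ of a form that restricts to zero on the fibers, and adding terms of the shape $(\text{something})\wedge dt$. Start with any smooth representative $u_0$ whose restriction to each fiber is the holomorphic form $u_t$; since $u$ is holomorphic as a section of $F$, we have $\overline{\partial}u_0\wedge dt=0$, hence $\overline{\partial}u_0=\sum \eta_0^k\wedge dt_k$ with $\eta_0^k$ of bidegree $(n-1,1)$. The first step is to make $\eta^k$ primitive on $X_0$: decompose $\eta_0^k|_{X_0}$ via the Lefschetz decomposition as $\eta_0^k=\eta_{\mathrm{prim}}^k+\omega\wedge\gamma^k$ for a $(n-2,0)$-form... more carefully, on an $n$-dimensional fiber an $(n-1,1)$-form $\alpha$ is primitive iff $\alpha\wedge\omega=0$; its non-primitive part is $\omega\wedge\beta$ with $\beta$ a $(n-2,0)$-form (a $\overline{\partial}$-closed-on-fiber piece can be arranged), so one modifies $u_0$ by $u_1=u_0+\sum \beta^k\wedge dt_k$-type terms to absorb $\omega\wedge\beta^k$ into $\overline{\partial}$ of the added term modulo $\wedge dt$ terms; here one uses that $\overline{\partial}(\beta^k\wedge dt_k)=\overline{\partial}\beta^k\wedge dt_k$ and that $\overline{\partial}\beta^k$ on $X_0$ can be related to $\omega\wedge\beta^k$ up to fiber-exact terms. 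This is the Kodaira–Spencer/primitivity normalization already used in \cite{Ber09}, so I would cite and reproduce it.

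The second step is to arrange $\partial^\varphi u\wedge\widehat{dt_j}=0$ at $t=0$. Note $\partial^\varphi u\wedge\widehat{dt_j}$ is an $L$-twisted $(n+1,0)$-form-times-$\widehat{dt_j}$ object whose restriction to $X_0$ is an $(n+1,0)$-form on an $n$-dimensional fiber, hence automatically zero \emph{as a form on the fiber}; the content is that the full form on $X$, wedged with $\widehat{dt_j}$, vanishes at points of $X_0$, i.e. the "$dt_l$-components" of $\partial^\varphi u$ transverse to the fiber vanish at $t=0$. The condition $D'^Hu=0$ at $t=0$ is exactly what lets us do this: differentiating $(u,v)_H$ and using the pointwise identity $\partial_{t_j}\overline{\partial}_{t_k}(u,v)_H$-type formulas from Section 2 (the displayed Chern-connection computations and the $f_*$ formalism), $D'^Hu=0$ translates into the statement that the fiber-integral of $\partial^\varphi u\wedge(\cdot)$ against test sections vanishes, and then one upgrades this integral vanishing to pointwise vanishing after a further modification of $u$ by a fiberwise-holomorphic... wait, rather by adding $\overline{\partial}$ of a fiberwise-primitive-solving term: solve a $\overline{\partial}$-equation along the fibers $X_0$ (each fiber is compact Kähler, so Hodge theory applies) to kill the unwanted transverse components while preserving the fiber restriction $u_t|_{X_0}=u$ and the primitivity arranged in step one. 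One must check these two modifications do not interfere; organizing them in the right order (primitivity first, then the $\partial^\varphi$ normalization using a correction that is $\overline{\partial}$-exact along fibers, hence does not disturb fiber restrictions or primitivity up to terms vanishing at $t=0$) is where care is needed.

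The main obstacle I anticipate is the second step: making the passage from the \emph{integrated} consequence of $D'^Hu=0$ (an $L^2$-orthogonality of $\partial^\varphi u$ against the fiber of $F$ at $0$) to the \emph{pointwise} vanishing $\partial^\varphi u\wedge\widehat{dt_j}=0$ on all of $X_0$, since a priori $D'^H u=0$ only controls the harmonic/holomorphic part of the relevant fiber form and one must show the remaining part is $\overline{\partial}$-exact along $X_0$ \emph{and} can be cleared without reintroducing non-primitive terms in $\overline{\partial}u$ or spoiling $\overline{\partial}u=\sum\eta^k\wedge dt_k$. Concretely, the correction term used in step two will itself have a $\overline{\partial}$ that contributes to the $\eta^k$'s, so one needs that this contribution is again primitive on $X_0$, or vanishes at $t=0$; verifying this compatibility — essentially that the Kähler identities on the fiber send the "$\partial^\varphi$-correction" into the primitive subspace — is the technical heart, and it is precisely the computation carried out in \cite[Proposition 4.2]{Ber09}, whose argument I would follow.
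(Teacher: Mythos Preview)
The paper does not prove this proposition; it is quoted from \cite[Proposition~4.2]{Ber09} and used as a black box for the curvature computations in Sections~3.2--3.3. There is therefore no argument in the present paper to compare your sketch against --- you are effectively outlining Berndtsson's original proof.

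As such a sketch, your second step is on the right track. Writing $\partial^\varphi u\wedge\widehat{dt_j}=\mu^j\wedge dt$ with $\mu^j$ an $L$-valued $(n,0)$-form on $X_0$, the hypothesis $D'^Hu=0$ at $t=0$ says exactly that each $\mu^j$ is $L^2$-orthogonal to $H^0(X_0,K_{X_0}\otimes L)$; this orthogonal complement is the image of $\partial^\varphi\colon\Omega^{n-1,0}(X_0,L)\to\Omega^{n,0}(X_0,L)$ (via the K\"ahler identity $(\partial^\varphi)^*=-i[\Lambda,\overline{\partial}]$ and injectivity of $\Lambda$ on $(n,1)$-forms on an $n$-fold), so one can solve $\partial^\varphi a^j=-\mu^j$ on $X_0$ and add $\sum a^j\wedge dt_j$ to $u$.

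Your first step, however, does not work as written. With $\beta^k$ an $(n-2,0)$-form, the correction $\sum\beta^k\wedge dt_k$ has bidegree $(n-1,0)$, not $(n,0)$, so it is not an admissible modification of the representative; and $\overline{\partial}\beta^k$ has bidegree $(n-2,1)$ while $\omega\wedge\beta^k$ has bidegree $(n-1,1)$, so they cannot be ``related up to fiber-exact terms'' as you suggest. The only freedom is $u\mapsto u+\sum a^k\wedge dt_k$ with $a^k$ of bidegree $(n-1,0)$, which on $X_0$ replaces $\eta^k$ by $\eta^k+\overline{\partial}_{X_0}a^k$, i.e.\ moves $\eta^k$ within its Dolbeault class. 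Thus a \emph{single} family of $(n-1,0)$-forms $a^k$ must simultaneously achieve $\partial^\varphi a^k=-\mu^k$ and make $\eta^k+\overline{\partial}a^k$ primitive on $X_0$; arranging this (and verifying that a suitable choice --- e.g.\ a minimal solution --- actually forces $\eta^k\wedge\omega=0$) is the genuine content of Berndtsson's argument. You correctly flag this compatibility as the crux, but the Lefschetz-decomposition route you propose for primitivity is not the mechanism that works.
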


Let $u_j\in\mathcal{O}(F)$ such that $D'^Hu_j=0$ at $t=0$, then we have that 
    \begin{align*}
        \idd T^H_u=-c_Nf_*(\hat{u}\wedge\overline{\hat{u}}\wedge\idd\varphi e^{-\varphi}) -\Bigl(\int_{X_0}|\eta|^2e^{-\varphi}dV_z\Bigr)dV_t,
    \end{align*}
at $t=0$, where $\overline{\partial}u_j=\sum\eta^k_j\wedge dt_k$ and $\eta=\sum\eta^j_j$.

From this calculation and Proposition \ref{smooth case Nak semi and T}, the following theorem is obtained.

\begin{theorem}\label{Ber09, Thm1.2}$(\mathrm{cf.~[Ber09,\,Theorem\,1.2]})$
    If $L$ has a smooth (semi)-positive Hermitian metric then the smooth canonical Hermitian metric $H$ on $F=f_*(K_{X/Y}\otimes L)$ is Nakano (semi)-positive.
\end{theorem}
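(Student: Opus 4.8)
The plan is to deduce the statement directly from the curvature identity displayed just above, combined with the characterization of Nakano semi-positivity in Proposition~\ref{smooth case Nak semi and T}. Since Nakano (semi-)positivity is a pointwise condition on the base $Y$, fix an arbitrary $y\in Y$, choose local coordinates $(t_1,\dots,t_m)$ centered at $y$, and take an $m$-tuple $u_1,\dots,u_m\in\mathcal{O}(F)_y$ of germs of holomorphic sections with $D'^Hu_j=0$ at $y$. Represent the $u_j$ by smooth forms on $X$ normalized as in Proposition~\ref{Ber09, Prop4.2} — so that the forms $\eta^k_j$ in $\overline{\partial}u_j=\sum_k\eta^k_j\wedge dt_k$ are primitive on $X_0$ and $\partial^\varphi u_j\wedge\widehat{dt_l}=0$ at $y$ — and set $\hat u=\sum_j u_j\wedge\widehat{dt_j}$, so that $T^H_u=c_N f_*(\hat u\wedge\overline{\hat u}\,e^{-\varphi})$.

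Next I would invoke the identity
\begin{align*}
    \idd T^H_u=-c_N f_*\!\big(\hat u\wedge\overline{\hat u}\wedge\idd\varphi\,e^{-\varphi}\big)-\Big(\int_{X_0}|\eta|^2e^{-\varphi}\,dV_z\Big)dV_t
\end{align*}
at $y$, where $\eta=\sum_j\eta^j_j$. If this identity were not already at hand, I would establish it by commuting $\idd$ past the fibre integration $f_*$ (legitimate because the fibres are compact), expanding $\overline{\partial}(\hat u\wedge\overline{\hat u}\,e^{-\varphi})$, and then applying the two normalizations of Proposition~\ref{Ber09, Prop4.2}: primitivity of the $\eta^k_j$ on $X_0$ kills the mixed terms involving $\idd\varphi$, while $\partial^\varphi u_j\wedge\widehat{dt_l}=0$ together with $D'^Hu_j=0$ lets one identify the remaining contribution as the $|\eta|^2$ term. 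This bookkeeping — the commutation of $\idd$ with $f_*$ and the cancellation of the cross terms — is the only genuinely delicate point; everything past it is a sign check.

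Now the hypothesis $i\Theta_{L,h}=\idd\varphi\ge0$ makes $c_N\,\hat u\wedge\overline{\hat u}\wedge\idd\varphi\,e^{-\varphi}$ a non-negative form of top degree on $X$, so its fibre integral is a non-negative $(m,m)$-form on $Y$; and the second term is visibly $\le0$. Hence $\idd T^H_u\le0$ at $y$, that is, $-\idd T^H_u\ge0$ at $y$. Since $y\in Y$ was arbitrary, Proposition~\ref{smooth case Nak semi and T} yields that $H$ is Nakano semi-positive on $Y$.

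For the strict statement, assume $i\Theta_{L,h}>0$ and suppose $\sum_{j,k}(\Theta^H_{jk}u_j,u_k)_H=0$ at $y$ for some tuple $(u_j)$; equivalently $\idd T^H_u=0$ at $y$. Then both non-negative terms in the formula vanish. A short computation in a frame diagonalizing $\idd\varphi$ shows that, where $\idd\varphi$ is positive definite, $c_N\hat u\wedge\overline{\hat u}\wedge\idd\varphi\,e^{-\varphi}$ equals a strictly positive multiple of $|\hat u|^2\,dV$; hence $\hat u$ vanishes identically on $X_0$, which forces each $u_j|_{X_0}=0$ and therefore $u_j=0$ as a section of $F$. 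Thus the curvature quadratic form is positive definite at every point, i.e. $H$ is Nakano positive.
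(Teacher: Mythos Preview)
Your argument is correct and follows the same route as the paper: invoke the displayed identity for $\idd T^H_u$ at $y$ and read off both Nakano semi-positivity and strict positivity via Proposition~\ref{smooth case Nak semi and T}. One small remark on your sketch of how the identity is derived: the roles of the two normalizations are reversed --- it is the condition $\partial^\varphi u_j\wedge\widehat{dt_l}=0$ that kills the extra term $\overline{\partial}^\varphi\hat u\wedge\partial^\varphi\overline{\hat u}$, while primitivity of the $\eta^k_j$ on $X_0$ is what turns the $\overline{\partial}\hat u$ contribution into the clean $-\int_{X_0}|\eta|^2e^{-\varphi}\,dV_z$ with the correct sign; there are no ``mixed terms involving $\idd\varphi$'' to cancel, since the $\idd\varphi$ term drops out directly from $\overline{\partial}\partial^\varphi+\partial^\varphi\overline{\partial}=\partial\overline{\partial}\varphi$ --- but this does not affect the validity of your proof, since you are using the identity as given.
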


\subsection{Calculation of $\widetilde{T}^H_u$ for the canonical Hermitian metric on $f_*(K_{X/Y}\otimes L)$} 

Represent the $u_j$ by smooth forms on $X$ and put 
\begin{align*}
    \tilde{u}:=\sum \overline{u}_j\wedge\widehat{dt_j}
\end{align*}
then we have the equality
\begin{align*}
    \widetilde{T}_u^H=(-1)^nc_Nf_*(\tilde{u}\wedge\overline{\tilde{u}}e^{-\varphi}),
\end{align*}
where using $ic_N=(-1)^N(-1)^{nm}c_nc_m$ and $ic_m(-1)^m\widehat{dt_j}\wedge\widehat{d\overline{t}_k}=\widehat{dt_j\wedge d\overline{t}_k}$.


In this subsection, we show the following proposition.

\begin{proposition}\label{calculation of dual Nakano positivity}
    Let $u_j\in\mathcal{O}(F)$ such that $D'^Hu_j=0$ at $t=0$, then we have that 
    \begin{align*}
        \idd \widetilde{T}^H_u
        =-c_Nf_*(\hat{v}\wedge\overline{\hat{v}}\wedge\idd\varphi e^{-\varphi})+c_n\Bigl(\int_{X_0}\sum\eta^j_k\wedge\overline{\eta}^k_j e^{-\varphi}\Bigr)dV_t
    \end{align*}
    at $t=0$, where $u_j=U_jdz,\, \hat{v}=\sum \overline{U}_j\wedge dz \wedge \widehat{dt_j}$ and $\overline{\partial}u_j=\sum\eta^k_j\wedge dt_k$.
    Here $c_Nf_*(\hat{v}\wedge\overline{\hat{v}}\wedge\idd\varphi e^{-\varphi})\geq0$ if $\varphi$ is plurisubharmonic.

    In particular, if $\eta^j_k$ is primitive on $X_0$ then we can write $\eta^j_k=\sum \eta_{jkl}\widehat{dz_l}\wedge d\overline{z}_l$ and get 
    \begin{align*}
        c_n\Bigl(\int_{X_0}\sum\eta^j_k\wedge\overline{\eta}^k_j e^{-\varphi}\Bigr)dV_t=-\Bigl(\int_{X_0}\sum\eta_{jkl}\overline{\eta}_{kjl} e^{-\varphi}dV_z\Bigr)dV_t.
    \end{align*}
\end{proposition}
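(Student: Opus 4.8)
\noindent\emph{Proof strategy.}
The plan is to run Berndtsson's curvature computation (the displayed identity for $\idd T^H_u$ preceding Theorem \ref{Ber09, Thm1.2}) in the dual setting, with $\widetilde{T}^H_u$ in place of $T^H_u$. The starting point is the algebraic identity
\begin{align*}
\tilde{u}\wedge\overline{\tilde{u}}=(-1)^{n}\,\hat{v}\wedge\overline{\hat{v}},
\end{align*}
valid with $u_j=U_j\,dz$ and $\hat{v}=\sum_j\overline{U}_j\,dz\wedge\widehat{dt_j}$: one verifies it by reordering, in each summand of $\tilde{u}\wedge\overline{\tilde{u}}$, the factors $\overline{u}_j=\overline{U}_j\,d\overline{z}$, $\widehat{dt_j}$, $u_k=U_k\,dz$, $\widehat{d\overline{t}_k}$ and collecting the sign $(-1)^{n^{2}}=(-1)^{n}$. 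Combined with $\widetilde{T}^H_u=(-1)^{n}c_Nf_*(\tilde{u}\wedge\overline{\tilde{u}}e^{-\varphi})$ this gives $\widetilde{T}^H_u=c_Nf_*(\hat{v}\wedge\overline{\hat{v}}e^{-\varphi})$, which is formally the same expression as $T^H_u=c_Nf_*(\hat{u}\wedge\overline{\hat{u}}e^{-\varphi})$ but with the $(N,0)$-form $\hat{v}$ (the coefficients of $\hat{u}$ conjugated) in place of $\hat{u}$. Since $f$ is a proper holomorphic submersion, $f_*$ commutes with $\partial$ and $\overline{\partial}$, hence $\idd\widetilde{T}^H_u=c_Nf_*\big(\idd(\hat{v}\wedge\overline{\hat{v}}e^{-\varphi})\big)$, and everything is reduced to a Leibniz computation on $X$ followed by fibre integration.

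I would then expand $\idd(\hat{v}\wedge\overline{\hat{v}}e^{-\varphi})$ by the Leibniz rule, using $\partial^\varphi\alpha=e^{\varphi}\partial(e^{-\varphi}\alpha)$; this splits into a curvature term coming from $i\Theta_{L,h}=\idd\varphi$, a term quadratic in $\partial^\varphi$ of the sections, a term quadratic in $\overline{\partial}u_j=\sum_k\eta^k_j\wedge dt_k$, and cross terms. One then feeds in Proposition \ref{Ber09, Prop4.2}, applied to each $u_j$: the smooth representatives can be chosen so that the $\eta^k_j$ are primitive on $X_0$ and, since $D'^Hu_j=0$ at $t=0$, also $\partial^\varphi u_j\wedge\widehat{dt_l}=0$ at $t=0$ for all $l$. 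Together with the bidegree constraints forced by $f_*$ (which produces top $(m,m)$-forms on $Y$), this last condition makes the $\partial^\varphi$-quadratic term and all cross terms vanish at $t=0$ --- the $D'^H$-quadratic contribution of the general identity disappears. What remains is the curvature term $-c_Nf_*(\hat{v}\wedge\overline{\hat{v}}\wedge\idd\varphi\,e^{-\varphi})$ and a fibre integral; reordering that integral and matching constants exactly as in the passage before Proposition \ref{calculation of dual Nakano positivity} (with $ic_N=(-1)^N(-1)^{nm}c_nc_m$ and $ic_m(-1)^m\widehat{dt_j}\wedge\widehat{d\overline{t}_k}=\widehat{dt_j\wedge d\overline{t}_k}$) identifies it as $c_n\big(\int_{X_0}\sum\eta^j_k\wedge\overline{\eta}^k_j\,e^{-\varphi}\big)dV_t$; the transposition of the indices relative to Berndtsson's $\eta=\sum_j\eta^j_j$ is exactly the passage from $(\Theta^H_{jk}u_j,u_k)_H$ to $(\Theta^H_{jk}u_k,u_j)_H$. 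Finally, since $\hat{v}$ has bidegree $(N,0)$, the form $c_N\hat{v}\wedge\overline{\hat{v}}\wedge\idd\varphi\,e^{-\varphi}$ is a nonnegative top form on $X$ when $\varphi$ is plurisubharmonic, and fibre integration preserves nonnegativity; this gives the asserted inequality.

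For the ``in particular'' assertion one computes pointwise on $X_0$: writing a primitive form as $\eta^j_k=\sum_l\eta_{jkl}\,\widehat{dz_l}\wedge d\overline{z}_l$, only the index-matched term survives in $\eta^j_k\wedge\overline{\eta}^k_j$, and evaluating $\widehat{dz_l}\wedge d\overline{z}_l\wedge\widehat{d\overline{z}_l}\wedge dz_l$ against $dV_z=c_n\,dz_1\wedge\cdots\wedge dz_n\wedge d\overline{z}_1\wedge\cdots\wedge d\overline{z}_n$ gives $c_n\sum_{j,k}\eta^j_k\wedge\overline{\eta}^k_j=-\sum_{j,k,l}\eta_{jkl}\overline{\eta}_{kjl}\,dV_z$; integrating over $X_0$ against $e^{-\varphi}$ yields the stated identity. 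The step I expect to be the real obstacle is the vanishing of the $\partial^\varphi$-quadratic and cross terms after $f_*$ and restriction to $t=0$: one must keep careful track of signs and of which wedge components are of top bidegree on $Y$ and, above all, check that passing from $\hat{u}$ to $\hat{v}$ does not disturb the cancellations in Berndtsson's argument, so that the transposed computation still closes up. It is here that the proof of Proposition \ref{Ber09, Prop4.2} --- the construction of a good smooth representative --- enters in an essential way.
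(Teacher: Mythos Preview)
Your proposal is correct and follows essentially the same route as the paper: Leibniz expansion of $\idd$ applied to the pushed-forward top form, bidegree counting to kill certain $f_*$-terms, Proposition~\ref{Ber09, Prop4.2} to eliminate the $\partial^\varphi$-quadratic contribution at $t=0$, and the constant identity $ic_N=(-1)^N(-1)^{nm}c_nc_m$ to read off the $\eta$-term.

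The one organisational difference worth noting is where the $\hat{v}$-substitution is made. You invoke the identity $\tilde{u}\wedge\overline{\tilde{u}}=(-1)^n\hat{v}\wedge\overline{\hat{v}}$ at the outset and then run the Leibniz expansion on $\hat{v}\wedge\overline{\hat{v}}e^{-\varphi}$. The paper instead carries out the whole Leibniz computation with $\tilde{u}=\sum\overline{u}_j\wedge\widehat{dt_j}$ and only passes to $\hat{v}$ at the very end, solely to exhibit the curvature term as a manifestly nonnegative quantity. The paper's choice is slightly cleaner for the intermediate steps, because $\partial\tilde{u}=\sum\overline{\overline{\partial}u_j}\wedge\widehat{dt_j}$ and $\overline{\partial}^\varphi\tilde{u}=\sum\overline{\partial^\varphi u_j}\wedge\widehat{dt_j}$ already display the $\eta^k_j$ and $\partial^\varphi u_j$ explicitly, so Proposition~\ref{Ber09, Prop4.2} applies without any further unpacking. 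With $\hat{v}$ the coefficients are $\overline{U}_j$ rather than $\overline{u}_j$, so the roles of $\partial$ and $\overline{\partial}$ on the sections are exchanged and one has to track an extra $dz\leftrightarrow d\overline{z}$ swap to see the same cancellations; nothing goes wrong, but the bookkeeping is a bit heavier. Since the two products agree up to sign, either route of course yields the same formula.
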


\begin{proof}
    By $\partial^\varphi\cdot=e^\varphi\partial(e^{-\varphi}\cdot)$, we get 
    \begin{align*}
        (-1)^n\partial \widetilde{T}^H_u
        =c_Nf_*(\partial\tilde{u}\wedge\overline{\tilde{u}}e^{-\varphi})+(-1)^Nc_Nf_*(\tilde{u}\wedge\partial^\varphi\overline{\tilde{u}}e^{-\varphi}).
    \end{align*}

    From the equation
    \begin{align*}
        \partial\tilde{u}=\sum \partial\overline{u}_j\wedge\widehat{dt_j}=\sum \overline{\overline{\partial}u_j}\wedge\widehat{dt_j}=\sum \overline{\eta}^l_j\wedge d\overline{t}_l\wedge\widehat{dt_j},
    \end{align*}
    the form 
    \begin{align*}
        \partial\tilde{u}\wedge\overline{\tilde{u}}&=\sum \overline{\eta}^l_j\wedge d\overline{t}_l\wedge\widehat{dt_j}\wedge u_k\wedge\widehat{d\overline{t}_k}\\
        &=(-1)^N\sum \overline{\eta}^l_j\wedge \widehat{dt_j}\wedge u_k\wedge d\overline{t}_l\wedge\widehat{d\overline{t}_k}\\
        &=(-1)^N\sum \overline{\eta}^k_j\wedge \widehat{dt_j}\wedge u_k\wedge d\overline{t}
    \end{align*}
    contains a factor $d\overline{t}$. On the other hand, the push forward of an $(n+m,n+m-1)$-form is of bidegree $(m,m-1)$. 
    Then we get 
    \begin{align*}
        f_*(\partial\tilde{u}\wedge\overline{\tilde{u}}e^{-\varphi})=0.
    \end{align*}

    Thus we have that 
    \begin{align*}
        (-1)^n\overline{\partial}\partial\widetilde{T}^H_u
        =(-1)^Nc_Nf_*(\overline{\partial}^\varphi\tilde{u}\wedge\partial^\varphi\overline{\tilde{u}}e^{-\varphi})+c_Nf_*(\tilde{u}\wedge\overline{\partial}\partial^\varphi\overline{\tilde{u}}e^{-\varphi}).
    \end{align*}
    By the equation $\overline{\partial}\partial^\varphi+\partial^\varphi\overline{\partial}=\partial\overline{\partial}\varphi$, we get 
    \begin{align*}
        c_Nf_*(\tilde{u}\wedge\overline{\partial}\partial^\varphi\overline{\tilde{u}}e^{-\varphi})
        =c_Nf_*(\tilde{u}\wedge\overline{\tilde{u}}\wedge\partial\overline{\partial}\varphi e^{-\varphi})-c_Nf_*(\tilde{u}\wedge\partial^\varphi\overline{\partial\tilde{u}}e^{-\varphi}),
    \end{align*}
    and by the vanishing $f_*(\tilde{u}\wedge\overline{\partial\tilde{u}}e^{-\varphi})=0$, we get 
    \begin{align*}
        0=\partial f_*(\tilde{u}\wedge\overline{\partial\tilde{u}}e^{-\varphi})
        =f_*(\partial\tilde{u}\wedge\overline{\partial\tilde{u}}e^{-\varphi})+(-1)^Nf_*(\tilde{u}\wedge\partial^\varphi\overline{\partial\tilde{u}}e^{-\varphi}).
    \end{align*}

    Hence, we have that 
    \begin{align*}
        (-1)^n\overline{\partial}\partial\widetilde{T}^H_u
        &=(-1)^Nc_Nf_*(\overline{\partial}^\varphi\tilde{u}\wedge\partial^\varphi\overline{\tilde{u}}e^{-\varphi})\\
        &\qquad+c_Nf_*(\tilde{u}\wedge\overline{\tilde{u}}\wedge\partial\overline{\partial}\varphi e^{-\varphi})+(-1)^Nc_Nf_*(\partial\tilde{u}\wedge\overline{\partial\tilde{u}}e^{-\varphi}).
    \end{align*}

    Note that with the choice of representatives of our sections $u_j$ furnished by Proposition \ref{Ber09, Prop4.2}, we have that $\overline{\partial}^\varphi\tilde{u}\wedge\partial^\varphi\overline{\tilde{u}}=0$ at $t=0$. 
    In fact, $\overline{\partial}^\varphi\tilde{u}=\sum\overline{\partial^\varphi u_j}\wedge\widehat{dt_j}$ and 
    \begin{align*}
        \overline{\partial}^\varphi\tilde{u}\wedge\partial^\varphi\overline{\tilde{u}}=\sum\overline{\partial^\varphi u_j}\wedge\widehat{dt_j}\wedge \partial^\varphi u_k\wedge\widehat{d\overline{t}_k}=0
    \end{align*}
    at $t=0$, where $\partial^\varphi u_j\wedge\widehat{dt_k}=0$ at $t=0$ for all $k$.

    \begin{lemma}
        We have that 
        \begin{align*}
            (-1)^Nc_Nf_*(\partial\tilde{u}\wedge\overline{\partial\tilde{u}}e^{-\varphi})=ic_n(-1)^n\Bigl(\int_{X_0}\sum \eta^j_k\wedge\overline{\eta}^k_je^{-\varphi}\Bigr)dV_t
        \end{align*}
        at $t=0$.
        In particular, if $\eta^j_k$ is primitive on $X_0$, i.e. $\eta^j_k\wedge \omega=0$ on $X_0$, then we can write $\eta^j_k=\sum \eta_{jkl}\widehat{dz_l}\wedge d\overline{z}_l$ and this integral value is
        \begin{align*}
            -i(-1)^n\Bigl(\int_{X_0}\sum \eta_{jkl}\overline{\eta}_{kjl}e^{-\varphi}dV_z\Bigr) dV_t.
        \end{align*}
    \end{lemma}

    \begin{proof}
        Here $\partial\tilde{u}=\sum \overline{\eta}^l_j\wedge d\overline{t}_l\wedge\widehat{dt_j}$, then
        \begin{align*}
            \partial\tilde{u}\wedge\overline{\partial\tilde{u}}
            =-(-1)^{nm}(-1)^{n^2}\sum \eta^j_k\wedge \overline{\eta}^k_j\wedge dt\wedge d\overline{t}.
        \end{align*}
        Therefore we get 
        \begin{align*}
            (-1)^Nc_N\partial\tilde{u}\wedge\overline{\partial\tilde{u}}&=-(-1)^Nc_N(-1)^{nm}(-1)^{n^2}\sum \eta^j_k\wedge \overline{\eta}^k_j\wedge dt\wedge d\overline{t}\\
            &=ic_nc_m(-1)^n\sum \eta^j_k\wedge \overline{\eta}^k_j\wedge dt\wedge d\overline{t}\\
            &=ic_n(-1)^n\sum \eta^j_k\wedge \overline{\eta}^k_j\wedge dV_t,
        \end{align*}
        where $ic_N=(-1)^N(-1)^{nm}c_nc_m$.

        If $\eta^j_k$ is primitive, we can write $\eta^j_k=\sum \eta_{jkl}\widehat{dz_l}\wedge d\overline{z}_l$. Then we get 
        \begin{align*}
            \eta^j_k\wedge \overline{\eta}^k_j&=\sum \eta_{jkl}\widehat{dz_l}\wedge d\overline{z}_l \wedge \overline{\eta}_{kj\mu}\widehat{d\overline{z}_\mu}\wedge dz_\mu\\
            &=\sum \eta_{jkl}\overline{\eta}_{kjl}\widehat{dz_l}\wedge d\overline{z}_l \wedge \widehat{d\overline{z}_l}\wedge dz_l\\
            &=(-1)^{2n-1}\sum \eta_{jkl}\overline{\eta}_{kjl} dz_l\wedge\widehat{dz_l}\wedge d\overline{z}_l \wedge \widehat{d\overline{z}_l}\\
            &=-\sum \eta_{jkl}\overline{\eta}_{kjl} dz\wedge d\overline{z},
        \end{align*}
        where $c_ndz\wedge d\overline{z}=dV_z$.
    \end{proof}

    Hence, we have that 
    \begin{align*}
        i\partial\overline{\partial}\widetilde{T}^H_u&=-(-1)^nc_Nf_*(\tilde{u}\wedge\overline{\tilde{u}}\wedge\idd\varphi e^{-\varphi})+c_n\Bigl(\int_{X_0}\sum\eta^j_k\wedge\overline{\eta}^k_j e^{-\varphi}\Bigr)dV_t
    \end{align*}
    at $t=0$. Let $u_j=U_jdz$ and $\varphi_{jk}:=\partial_{t_j}\overline{\partial}_{t_k}\varphi$.
    Here if $\varphi$ is plurisubharmonic then $c_N\hat{u}\wedge\overline{\hat{u}}\wedge\idd\varphi=\sum \varphi_{jk}U_j\overline{U}_kdV_z \wedge dV_t\geq0$.
    By $\tilde{u}=\sum \overline{u}_j\wedge \widehat{dt_j}=\sum \overline{U}_j d\overline{z} \wedge \widehat{dt_j}$ and $d\overline{z}\wedge dz=(-1)^{n^2}dz\wedge d\overline{z}=(-1)^ndz\wedge d\overline{z}$, we have that 
    \begin{align*}
        c_N\tilde{u}\wedge\overline{\tilde{u}}\wedge\idd\varphi=(-1)^n\sum \varphi_{jk}\overline{U}_j U_kdV_z \wedge dV_t
        =(-1)^nc_N\hat{v}\wedge\overline{\hat{v}}\wedge\idd\varphi,
    \end{align*}
    where $\hat{v}=\sum \overline{U}_j\wedge dz \wedge \widehat{dt_j}$ and that 
    \begin{align*}
        (-1)^nc_Nf_*(\tilde{u}\wedge\overline{\tilde{u}}\wedge\idd\varphi e^{-\varphi})&=c_Nf_*(\hat{v}\wedge\overline{\hat{v}}\wedge\idd\varphi e^{-\varphi})\\
        &=f_*(\sum \varphi_{jk}\overline{U}_j U_k e^{-\varphi}dV_z \wedge dV_t)\\
        &=\Bigl(\int_{X_0}\sum \varphi_{jk}\overline{U}_j U_k e^{-\varphi}dV_z\Bigr)dV_t\\
        &\geq0,
    \end{align*}
    if $\varphi$ is plurisubharmonic.
\end{proof}

\subsection{Proof of Theorem \ref{d Nak posi of direct image sheaf} and projectivized bundles}

Let $V$ be a holomorphic vector bundle of finite rank $r$ over a compact complex manifold $Y$.
Let $\pi:\mathbb{P}(V)\to Y$ be a projectivized bundle whose fiber at $t\in Y$ is the projective space of lines in $V^*_t$, i.e. $\mathbb{P}(V^*_t)$.
For any point $t\in Y$, we get $\pi^{-1}(t)=\mathbb{P}(V^*_t)\cong\mathbb{P}^{r-1}$ then $\mathbb{P}(V)$ is a holomorphically locally trivial fibration.
This projectivized bundle carries the tautological line bundle $\mathcal{O}_{\mathbb{P}(V)}(1)$ over $\mathbb{P}(V)$ whose restriction to any fiber $\mathbb{P}(V^*_t)$ is identical to $\mathcal{O}_{\mathbb{P}^{r-1}}(1)$.

We shall apply Proposition \ref{calculation of dual Nakano positivity} to the line bundles $\mathcal{O}_{\mathbb{P}(V)}(k)\to\mathbb{P}(V)$ where $k\in\mathbb{Z}$.
Let $E(k)$ be the vector bundle whose fiber over a point $t\in Y$ is the space of global holomorphic sections of $K_{\mathbb{P}(V^*_t)}\otimes\mathcal{O}_{\mathbb{P}(V)}(k)$, i.e. 
\begin{align*}
    E(k)&:=\bigcup_{t\in Y}H^0(\mathbb{P}(V^*_t),K_{\mathbb{P}(V^*_t)}\otimes\mathcal{O}_{\mathbb{P}(V)}(k)|_{\mathbb{P}(V^*_t)})\\
    &=\pi_*(K_{\mathbb{P}(V)/Y}\otimes\mathcal{O}_{\mathbb{P}(V)}(k)),
\end{align*}
where $E(k)_t=H^0(\mathbb{P}(V^*_t),K_{\mathbb{P}(V^*_t)}\otimes\mathcal{O}_{\mathbb{P}(V)}(k)|_{\mathbb{P}(V^*_t)})\cong H^0(\mathbb{P}^{r-1},\mathcal{O}_{\mathbb{P}^{r-1}}(k-r))$.
If $k<r$ then each fiber $E(k)_t$ is zero. 
Berndtsson asserted the following fact
\begin{align*}
    E(r+m)=S^m(V)\otimes\mathrm{det}\,V,
\end{align*}
where $S^m(V)$ is the $m$-th symmetric power of $V$, and showed the following theorem using Theorem \ref{Ber09, Thm1.2}. 

\begin{theorem}\label{Ber09, Thm1.3}$(\mathrm{cf.~[Ber09,\,Theorem\,1.3]})$
    Let $V$ be a (finite rank) holomorphic vector bundle over a complex manifold. 
    If $\mathcal{O}_{\mathbb{P}(V)}(1)$ has a smooth (semi)-positive metric, then $V\otimes\mathrm{det}\,V$ has a smooth canonical Hermitian metric which is Nakano (semi)-positive.
\end{theorem}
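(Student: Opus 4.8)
The plan is to apply Berndtsson's Theorem \ref{Ber09, Thm1.2} to the fibration $\pi\colon\mathbb{P}(V)\to Y$ and the line bundle $\mathcal{O}_{\mathbb{P}(V)}(r+1)$, and then to transport the conclusion along the isomorphism $E(r+1)\cong V\otimes\mathrm{det}\,V$, i.e., the case $m=1$ of the fact $E(r+m)=S^m(V)\otimes\mathrm{det}\,V$ recorded above.

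The first step is to reduce to a situation where Theorem \ref{Ber09, Thm1.2} actually applies, the issue being that its ambient hypothesis requires the total space to be \kah, whereas $\mathbb{P}(V)$ need not be if $Y$ is not. Since Nakano (semi)-positivity of a smooth Hermitian metric on a bundle over $Y$ is local on $Y$, and since the canonical metric on $E(k)=\pi_*(K_{\mathbb{P}(V)/Y}\otimes\mathcal{O}_{\mathbb{P}(V)}(k))$ is built fibrewise by integration against $e^{-\varphi}$ and hence restricts well to open subsets of $Y$, it suffices to argue over an arbitrary coordinate ball $B\subseteq Y$. Over $B$ the bundle $V$ is trivial, so $\mathbb{P}(V)|_B\cong\mathbb{P}^{r-1}\times B$ is \kah, and $\pi|_B$ is a proper holomorphic submersion, so the hypotheses on the total space and on the map are met.

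Next, from a smooth (semi)-positive Hermitian metric $h$ on $\mathcal{O}_{\mathbb{P}(V)}(1)$ one obtains the smooth Hermitian metric $h^{\otimes(r+1)}$ on $\mathcal{O}_{\mathbb{P}(V)}(r+1)=\mathcal{O}_{\mathbb{P}(V)}(1)^{\otimes(r+1)}$, whose curvature is $(r+1)\,i\Theta_{\mathcal{O}_{\mathbb{P}(V)}(1),h}$ and so is again (semi)-positive. Applying Theorem \ref{Ber09, Thm1.2} to the data $(\pi|_B,\mathcal{O}_{\mathbb{P}(V)}(r+1),h^{\otimes(r+1)})$ shows that the smooth canonical Hermitian metric $H$ on $E(r+1)|_B$ is Nakano (semi)-positive; as $B$ was arbitrary, $H$ is Nakano (semi)-positive on all of $Y$. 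Finally, pulling $H$ back through the isomorphism $E(r+1)\cong V\otimes\mathrm{det}\,V$ gives a smooth Hermitian metric on $V\otimes\mathrm{det}\,V$, namely the canonical metric, which is Nakano (semi)-positive, since Nakano (semi)-positivity is preserved under holomorphic vector bundle isomorphisms.

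The only genuinely delicate point is the \kah-ness hypothesis of Theorem \ref{Ber09, Thm1.2} on the total space, which the localization in the first step resolves using local triviality of $\mathbb{P}(V)\to Y$; the remaining ingredients, namely (semi)-positivity of $\mathcal{O}_{\mathbb{P}(V)}(r+1)$, local freeness of $E(r+1)$, and the identification $E(r+1)\cong V\otimes\mathrm{det}\,V$, are either elementary or already recorded in the excerpt.
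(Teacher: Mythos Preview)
Your proof is correct and follows essentially the same route the paper indicates: apply Theorem~\ref{Ber09, Thm1.2} to the fibration $\pi\colon\mathbb{P}(V)\to Y$ with the line bundle $\mathcal{O}_{\mathbb{P}(V)}(r+1)$, and then invoke the identification $E(r+1)\cong V\otimes\det V$ (the case $m=1$ of $E(r+m)=S^m(V)\otimes\det V$). Your explicit treatment of the \kah hypothesis via localization to coordinate balls is a useful detail that the paper leaves implicit in its citation of \cite{Ber09}.
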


Here, the vector bundle $V$ is called ample in the sense of Hartshorne (see \cite{Har66}) if the tautological line bundle $\mathcal{O}_{\mathbb{P}(V)}(1)$ is ample.
Replacing $\mathcal{O}_{\mathbb{P}(V)}(r+1)$ by $\mathcal{O}_{\mathbb{P}(V)}(r+m)$, we also get that $S^m(V)\otimes\mathrm{det}\,V$ is Nakano (semi)-positivity for any $m\in\mathbb{N}$.

It is well known Griffiths conjecture that an ample vector bundle is Griffiths positive, i.e. has a smooth Griffiths positive Hermitian metric.
From Demailly-Skoda's theorem (see \cite{DS80}) that if $V$ is Griffiths (semi)-positive then $V\otimes\mathrm{det}\,V$ is (dual) Nakano (semi)-positive, this theorem may be regarded as indirect evidence of Griffiths conjecture.
After that, it was shown that $S^m(V)\otimes\mathrm{det}\,V$ has Nakano-positive metric and dual Nakano-positive metric (see [LSY13,\,Corollary\,4.12]).
Griffiths conjecture is known when $Y$ is a compact curve (cf. \cite{Ume73}), and it was recently shown to hold under a certain condition for the $L^2$ metric (see \cite{Nau21}). 
Since the Kodaira-Spencer forms vanishes under certain condition in \cite{Nau21}, we obtain the following theorem for dual Nakano positivity of the canonical Hermitian metric which is a different metric in \cite{LSY13}. 

\begin{theorem}\label{d Nak posi for d.i.sheaf in subsection}
    Let $V$ be an ample holomorphic vector bundle of rank $r$ over a complex manifold $Y$. 
    If 
    the canonical isomorphism 
    \begin{align*}
        K^{-1}_{\mathbb{P}(V)/Y}\cong \mathcal{O}_{\mathbb{P}(V)}(r)\otimes\pi^*\mathrm{det}\,V^*
    \end{align*}
    becomes an isometry for an positive metric on $\mathcal{O}_{\mathbb{P}(V)}(1)$ and some Hermitian metric on $\mathrm{det}\,V$, 
    then for any $m\in\mathbb{N}$ and for a smooth (semi)-positive Hermitian metric $h$ on $\mathcal{O}_{\mathbb{P}(V)}(r+m)$, the smooth canonical Hermitian metric $H$ induced by $h$ on $S^m(V)\otimes\mathrm{det}\,V$ is dual Nakano (semi)-positive.
\end{theorem}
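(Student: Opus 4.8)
The plan is to realize $S^m(V)\otimes\det V$ as a direct image sheaf to which Proposition \ref{calculation of dual Nakano positivity} applies, and then to exhibit a choice of representatives for which the forms $\eta^j_k$ from that proposition vanish on each fiber, so that the error term $c_n\bigl(\int_{X_0}\sum\eta^j_k\wedge\overline{\eta}^k_j e^{-\varphi}\bigr)dV_t$ drops out and only the nonnegative term $-c_Nf_*(\hat v\wedge\overline{\hat v}\wedge\idd\varphi\, e^{-\varphi})$ survives; by Proposition \ref{d Nak posi if and only if T condition} this gives dual Nakano (semi)-positivity of $H$. Concretely, set $X=\mathbb{P}(V)$, $f=\pi$, $L=\mathcal{O}_{\mathbb{P}(V)}(r+m)$ with its smooth (semi)-positive metric $h$, so that $F=\pi_*(K_{\mathbb{P}(V)/Y}\otimes L)=E(r+m)=S^m(V)\otimes\det V$ and $H$ is precisely the smooth canonical Hermitian metric of Section 3.1. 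The point is that $\pi$ is a holomorphically locally trivial fibration with fiber $\mathbb{P}^{r-1}$, so the complex structure of the fibers does not vary; the task is to translate this into the vanishing of the Kodaira--Spencer representatives, which is exactly the hypothesis needed to apply Theorem \ref{d Nak posi of direct image sheaf}.

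The key steps, in order, are: (1) observe that since $\mathbb{P}(V)\to Y$ is locally trivial, the Kodaira--Spencer map $\rho_t:T^{1,0}_{Y,t}\to H^{0,1}(X_t,T^{1,0}_{X_t})$ is zero, because $H^{0,1}(\mathbb{P}^{r-1},T^{1,0}_{\mathbb{P}^{r-1}})=H^1(\mathbb{P}^{r-1},T_{\mathbb{P}^{r-1}})=0$ forces the target to vanish regardless of the deformation; hence the Kodaira--Spencer forms can be taken to be $0$. (2) Use the isometry hypothesis on $K^{-1}_{\mathbb{P}(V)/Y}\cong\mathcal{O}_{\mathbb{P}(V)}(r)\otimes\pi^*\det V^*$ to guarantee that the metric data we are handed on $L=\mathcal{O}_{\mathbb{P}(V)}(r+m)$ together with the relative canonical bundle are compatible with the identification of $F$ with $S^m(V)\otimes\det V$ in a way that makes $H$ the correct canonical metric; this is where the hypothesis is genuinely used, ensuring that the $\idd\varphi$ term is the curvature of the honest metric on $L$. (3) Invoke Theorem \ref{d Nak posi of direct image sheaf} directly: its hypotheses (a Kähler manifold $X=\mathbb{P}(V)$ — Kählerian since $V$ is ample, so $\mathcal{O}_{\mathbb{P}(V)}(1)$ is ample and $\mathbb{P}(V)$ carries a Kähler metric; a proper holomorphic submersion $\pi$; a smooth semi-positive metric $h$ on $L$; and vanishing Kodaira--Spencer forms) are all met, so $H$ on $F=S^m(V)\otimes\det V$ is dual Nakano (semi)-positive. (4) Record the strict case: if $h$ is strictly positive on $\mathcal{O}_{\mathbb{P}(V)}(r+m)$, then $\idd\varphi>0$ in the fiber and relative directions, and a standard positivity-of-the-fiber-integral argument upgrades the conclusion to dual Nakano positivity.

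Most of this is a matter of checking hypotheses; the one place where real care is needed — and the main obstacle — is step (2), making the chain of identifications $\pi_*(K_{\mathbb{P}(V)/Y}\otimes\mathcal{O}_{\mathbb{P}(V)}(r+m))=E(r+m)=S^m(V)\otimes\det V$ compatible at the level of metrics, not just bundles. The isometry hypothesis in the statement is precisely what is designed to handle this: it lets one rewrite $K_{\mathbb{P}(V)/Y}\otimes\mathcal{O}_{\mathbb{P}(V)}(r+m)=\mathcal{O}_{\mathbb{P}(V)}(m)\otimes\pi^*\det V$ isometrically, so that the weight $\varphi$ appearing in Proposition \ref{calculation of dual Nakano positivity} is the weight of a genuine metric on $\mathcal{O}_{\mathbb{P}(V)}(r+m)$ and hence plurisubharmonic (semi-positive case) or strictly plurisubharmonic along fibers (positive case). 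Once this is in place the rest is an invocation of Theorem \ref{d Nak posi of direct image sheaf}; the subtlety of vanishing Kodaira--Spencer forms has already been absorbed into that theorem, and here it is automatic from local triviality together with $H^1(\mathbb{P}^{r-1},T_{\mathbb{P}^{r-1}})=0$.
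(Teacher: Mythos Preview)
Your overall architecture is the same as the paper's: realize $S^m(V)\otimes\det V$ as $E(r+m)=\pi_*(K_{\mathbb{P}(V)/Y}\otimes\mathcal{O}_{\mathbb{P}(V)}(r+m))$, show that the Kodaira--Spencer contribution vanishes so that Proposition~\ref{calculation of dual Nakano positivity} reduces to $\idd\widetilde T^H_u=-c_N\pi_*(\hat v\wedge\overline{\hat v}\wedge\idd\varphi\,e^{-\varphi})\leq 0$, and then invoke Proposition~\ref{d Nak posi if and only if T condition}. Where you diverge from the paper is in the role you assign to the isometry hypothesis.

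In the paper the isometry hypothesis $K^{-1}_{\mathbb{P}(V)/Y}\cong\mathcal{O}_{\mathbb{P}(V)}(r)\otimes\pi^*\det V^*$ is \emph{not} used for any ``metric compatibility'' in identifying $E(r+m)$ with $S^m(V)\otimes\det V$; that identification is purely a vector-bundle isomorphism, and $H$ is defined directly on $E(r+m)$. The isometry hypothesis is instead the input to Proposition~\ref{Nau21 Prop1} (Naumann), which says that the \emph{specific} Kodaira--Spencer forms $\theta_j=\overline\partial(v_j)|_{X_t}$ built from the horizontal lifts $v_j$ (relative to the chosen positive metric on $\mathcal{O}_{\mathbb{P}(V)}(1)$) are harmonic. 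Since $H^{0,1}(\mathbb{P}^{r-1},T^{1,0}_{\mathbb{P}^{r-1}})=0$, harmonic representatives of the zero class must themselves vanish, so $\theta_j=0$ as forms. It is this vanishing of the \emph{particular} $\theta_j$---not merely of the classes---that forces $\eta^j_k=\theta_k\rfloor u_j=0$ on each fiber for the representatives used in Proposition~\ref{calculation of dual Nakano positivity}, and hence kills the second term there.

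Your step~(1) only shows the Kodaira--Spencer \emph{classes} vanish; that by itself says $\eta^j_k|_{X_0}$ is $\overline\partial$-exact, but the representatives furnished by Proposition~\ref{Ber09, Prop4.2} (which are the ones entering the formula of Proposition~\ref{calculation of dual Nakano positivity}) are only known to be primitive, not zero. Your step~(2) is therefore misplaced: the isometry hypothesis is exactly what closes this gap, via harmonicity, and you should replace your ``metric compatibility'' discussion by an appeal to Proposition~\ref{Nau21 Prop1}. Once that is done, your steps~(3) and~(4) agree with the paper's conclusion.
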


We prove this below. Let $(t_1,\ldots,t_m)$ and $(z_1,\ldots,z_n)$ be local coordinates on $Y$ and the fibers respectively.
By ampleness of $V$, there is a smooth positive Hermitian metric $h_{O(1)}$ on $\mathcal{O}_{\mathbb{P}(V)}(1)$.
We write locally for the curvature of the positively curved metric
\begin{align*}
    \omega_{\mathbb{P}(V)}:&=-\idd\log h_{O(1)}\\
    &=i\Bigl(g_{\alpha\overline{\beta}}dz_\alpha\wedge d\overline{z}_\beta
    +h^{O(1)}_{k\overline{\beta}}dt_k\wedge d\overline{z}_\beta
    +h^{O(1)}_{\alpha\overline{l}}dz_\alpha\wedge d\overline{z}_l
    +h^{O(1)}_{k\overline{l}}dz_k\wedge d\overline{z}_l\Bigr).
\end{align*}
Thus the \kah forms on each fibers are given by $\omega_t:=i\sum g_{\alpha\overline{\beta}}dz_\alpha\wedge d\overline{z}_\beta$ and the induced metric on $K^{-1}_{\mathbb{P}(V)/Y}$ can be written as $\mathrm{det}\,(g_{\alpha\overline{\beta}})$.
Here, this positive metric $h_{O(1)}$ induces the above canonical isomorphism.

According to \cite{Sch93}, we denote the horizontal lift of a tangent vector $\partial/\partial t_j$ on the base $Y$ by $v_j$. It is given by 
\begin{align*}
    v_j=\frac{\partial}{\partial t_j}+\sum a^\alpha_j\frac{\partial}{\partial z_\alpha} \quad \mathrm{and} \quad a^\alpha_j=-\sum g^{\overline{\beta}\alpha}h^{O(1)}_{j\overline{\beta}}.
\end{align*} 
For a fibration $\pi:\mathbb{P}(V)\to Y$, we obtain the Kodaira-Springer forms by 
\begin{align*}
    \theta_j:=\overline{\partial}(v_j)|_{X_t},
\end{align*}
where $\theta_j\in \rho_t(\partial/\partial t_j)$.

\begin{proposition}\label{Nau21 Prop1}$(\mathrm{cf.~[Nau21,\,Proposition\,1]})$
    Under the assumption of Theorem \ref{d Nak posi for d.i.sheaf in subsection}, the Kodaira-Spencer forms $\theta_j$ are harmonic, hence zero.
\end{proposition}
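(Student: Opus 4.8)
The plan is to follow Naumann's argument and show that, under the isometry hypothesis, the Kodaira-Spencer forms $\theta_j = \overline{\partial}(v_j)|_{X_t}$ are harmonic with respect to the fiberwise K\"ahler metric $\omega_t$, and then invoke the fact that a harmonic form lying in the trivial cohomology class $\rho_t(\partial/\partial t_j) \in H^{0,1}(X_t, T^{1,0}_{X_t})$ — which is trivial because the fibers $\mathbb{P}^{r-1}$ are rigid, $H^1(\mathbb{P}^{r-1}, T_{\mathbb{P}^{r-1}}) = 0$ — must vanish identically. So the real content is the harmonicity of $\theta_j$. First I would set up the geodesic-type normal coordinates at a fixed point of a fixed fiber $X_{t_0}$ adapted to $\omega_{\mathbb{P}(V)} = -\idd \log h_{O(1)}$, so that the horizontal lift $v_j = \partial/\partial t_j + \sum a^\alpha_j \partial/\partial z_\alpha$ with $a^\alpha_j = -\sum g^{\overline{\beta}\alpha} h^{O(1)}_{j\overline{\beta}}$ is expressed with the first derivatives of $g_{\alpha\overline{\beta}}$ controlled at that point.

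Next I would compute $\theta_j = \overline{\partial}(v_j)|_{X_t} = \sum \overline{\partial}(a^\alpha_j) \otimes \partial/\partial z_\alpha$ restricted to the fiber, and then compute its codifferential $\overline{\partial}^*_{\omega_t} \theta_j$ with respect to the fiber metric $g_{\alpha\overline{\beta}}$. The key input is that the metric $\det(g_{\alpha\overline{\beta}})$ on $K^{-1}_{\mathbb{P}(V)/Y}$ induced by $h_{O(1)}$ is, by the isometry assumption of Theorem \ref{d Nak posi for d.i.sheaf in subsection}, the pullback of $\mathcal{O}_{\mathbb{P}(V)}(r) \otimes \pi^* \det V^*$ via the canonical isomorphism with a metric that is a product of the fixed $h_{O(1)}^{\otimes r}$ and a pullback metric from $Y$. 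Concretely this forces a relation between the $t$-derivatives of $\log \det(g_{\alpha\overline{\beta}})$ and the horizontal lift data $h^{O(1)}_{j\overline{\beta}}$ — morally, $v_j \log \det(g_{\alpha\overline{\beta}})$ is ($z$-independent, i.e.) constant along the fiber up to the contribution of the pullback metric on $\det V$. Differentiating the defining identity $a^\alpha_j = -g^{\overline{\beta}\alpha} h^{O(1)}_{j\overline{\beta}}$ in $\overline{z}_\gamma$ and contracting against $g^{\gamma\overline{\delta}}$ and summing, the trace $\sum_\alpha g^{\gamma\overline{\alpha}} \partial_{\overline{z}_\gamma} a^\alpha_j$ that computes $\overline{\partial}^* \theta_j$ becomes (a fiber derivative of) $v_j \log \det(g_{\alpha\overline{\beta}})$, which the isometry kills modulo an exact term; hence $\overline{\partial}^* \theta_j = 0$, so $\theta_j$ is harmonic.

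The main obstacle I expect is bookkeeping: extracting the precise pointwise consequence of the word ``isometry'' in the canonical isomorphism $K^{-1}_{\mathbb{P}(V)/Y} \cong \mathcal{O}_{\mathbb{P}(V)}(r) \otimes \pi^*\det V^*$, translating it into an identity involving $\partial_{t_j} \log \det(g_{\alpha\overline{\beta}})$, $h^{O(1)}_{j\overline{\beta}}$, and the metric on $\det V$, and then verifying that exactly this identity is what makes the divergence $\sum g^{\gamma\overline{\alpha}} \partial_{\overline{z}_\gamma} a^\alpha_j$ vanish on the fiber. There is also a subtlety in that $\theta_j$ is only required harmonic as a $T^{1,0}_{X_t}$-valued $(0,1)$-form for the bundle metric induced by $\omega_t$ on $T^{1,0}_{X_t}$, so one must be careful that $\overline{\partial}^* = \overline{\partial}^*_{\omega_t}$ uses the Chern connection of $(T^{1,0}_{X_t}, \omega_t)$; but on $\mathbb{P}^{r-1}$ with the (up to scale) Fubini–Study metric this is classical, and once harmonicity is established, $H^{0,1}(X_t, T^{1,0}_{X_t}) = H^1(\mathbb{P}^{r-1}, T_{\mathbb{P}^{r-1}}) = 0$ forces $\theta_j \equiv 0$ for every $j$, which is the assertion.
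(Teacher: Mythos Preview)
Your proposal is correct and follows the same two-step structure as the paper: first establish harmonicity of $\theta_j$ via the isometry hypothesis (which the paper simply cites from [Nau21], while you sketch the actual computation relating $\overline{\partial}^*\theta_j$ to $v_j\log\det(g_{\alpha\overline{\beta}})$), and second conclude $\theta_j=0$ from the vanishing $H^{0,1}(\mathbb{P}^{r-1},T^{1,0}_{\mathbb{P}^{r-1}})=0$. The paper's own text after the proposition gives only this second step explicitly, so your outline is in fact more detailed than what appears there.
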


Since it is a projectivized bundle, we get $\mathscr{H}^{0,1}(\mathbb{P}(V^*_t),T^{1,0}_{\mathbb{P}(V^*_t)})\cong H^{0,1}(\mathbb{P}(V^*_t),T^{1,0}_{\mathbb{P}(V^*_t)})\cong H^{0,1}(\mathbb{P}^{r-1},T^{1,0}_{\mathbb{P}^{r-1}})=0$.
Then the value of Kodaira-Spencer map is zero.  
Here $\{\theta_j\}=\rho_t(\partial/\partial t_j)=0$. By the forms $\theta_j$ is harmonic, $\theta_j$ is zero as differential forms.

\vspace{3mm}

$\textit{Proof of Theorem \ref{d Nak posi for d.i.sheaf in subsection}}$.
    From the Kodaira-Spencer forms $\theta_j$ are zero and the definition of the complex structure in $E(r+m)$, for any local holomorphic section $u\in \mathcal{O}(E(r+m))_t$, the restriction of
    \begin{align*}
        \overline{\partial}u=\sum \eta^j\wedge dt_j
    \end{align*}
    to each fiber is zero. In fact the smooth $(n-1,1)$-forms $\eta^j$ equals $\theta_j\rfloor u$ in each fiber.
    In particular, we get $\eta^j=\theta_j\rfloor u=0$ in each fiber.

    By Proposition \ref{calculation of dual Nakano positivity}, for any local holomorphic section $u_j\in\mathcal{O}(E(r+m))$ such that $D'^Hu_j=0$ at $t=0$ we have that 
    \begin{align*}
        \idd\widetilde{T}^H_u=-c_N\pi_*(\hat{v}\wedge\overline{\hat{v}}\wedge\idd\varphi e^{-\varphi})
    \end{align*}
    at $t=0$, where $u_j=U_jdz,\, \hat{v}=\sum \overline{U}_j\wedge dz \wedge \widehat{dt_j}$ and $\varphi=-\log h$ on locally.
    Here $c_N\pi_*(\hat{v}\wedge\overline{\hat{v}}\wedge\idd\varphi e^{-\varphi})\geq0 \,\,\,(resp. \,>0)$ if $\varphi$ is (strictly) plurisubharmonic.

    Hence, this theorem follows from Proposition \ref{d Nak posi if and only if T condition}. \qed

\vspace{2mm}

Similar to this proof, Theorem \ref{d Nak posi of direct image sheaf} can be shown from Proposition \ref{calculation of dual Nakano positivity}, since if Kodaira-Spencer forms $\theta_j$ can be taken to be zero then $\eta^j$ vanishes where $\theta_j\in\rho_t(\partial/\partial t_j)$.
And the following corollary is obtained.

\begin{corollary}\label{d Nak posi for X*Y to Y}
    Let $X$ be a compact \kah manifold, $Y$ be a complex manifold and $L$ be a holomorphic vector bundle over $Z:=X\times Y$ equipped with a smooth semi-positive Hermitian metric $h$. Let $\pi:Z=X\times Y\to Y$ be a natural projection map.
    Then the smooth canonical Hermitian metric $H$ on $\pi_*(K_{Z/Y}\otimes L)$ has dual Nakano semi-positivity.
\end{corollary}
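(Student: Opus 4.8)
The plan is to obtain this as an immediate specialization of Theorem~\ref{d Nak posi of direct image sheaf}: for a product fibration the complex structure of the fibers is literally constant, so the Kodaira--Spencer forms may be taken to be identically zero.

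First I would reduce to a local statement on the base. Dual Nakano semi-positivity of $H$ is a pointwise condition on $Y$ (by Proposition~\ref{d Nak posi if and only if T condition}, or simply by its definition via the dual metric), so it suffices to prove the assertion after replacing $Y$ by an arbitrary coordinate polydisc $\Omega\subseteq Y$. This reduction is genuinely needed, since $Y$ is only assumed to be a complex manifold and $Z=X\times Y$ need not be globally Kähler; on $\Omega$, however, the product of a Kähler metric on the compact Kähler manifold $X$ with the Euclidean Kähler metric on $\Omega$ exhibits $Z_\Omega:=X\times\Omega$ as a Kähler manifold, and compactness of $X$ makes $\pi:Z_\Omega\to\Omega$ a proper holomorphic submersion. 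Thus $\pi:Z_\Omega\to\Omega$ together with $L|_{Z_\Omega}$ and $h|_{Z_\Omega}$ satisfies all the hypotheses of Theorem~\ref{d Nak posi of direct image sheaf}, except possibly the vanishing of the Kodaira--Spencer forms.

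Next I would verify that vanishing. Writing $(t_1,\dots,t_m)$ for the coordinates on $\Omega$ and pulling them back to $Z_\Omega$, the holomorphic vector field $\partial/\partial t_j$ on $Z_\Omega$ is a global holomorphic lift of $\partial/\partial t_j$ on $\Omega$; taking it as the horizontal lift $v_j$, one has $\overline{\partial}v_j=0$ on $Z_\Omega$, hence the Kodaira--Spencer form $\theta_j:=\overline{\partial}(v_j)|_{X_t}$ vanishes for every $t\in\Omega$ and every $j$. In particular each class $\rho_t(\partial/\partial t_j)\in H^{0,1}(X_t,T^{1,0}_{X_t})$ is represented by the zero form, so the hypothesis of Theorem~\ref{d Nak posi of direct image sheaf} is met.

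Finally, Theorem~\ref{d Nak posi of direct image sheaf} applied to $\pi:Z_\Omega\to\Omega$ gives that the smooth canonical Hermitian metric $H$ on $\pi_*(K_{Z_\Omega/\Omega}\otimes L)=\big(\pi_*(K_{Z/Y}\otimes L)\big)|_\Omega$ is dual Nakano semi-positive on $\Omega$. Since $\Omega$ was an arbitrary coordinate polydisc in $Y$, $H$ is dual Nakano semi-positive on all of $Y$. There is essentially no obstacle here: the one point requiring a little care is the localization in the base, which is what guarantees a Kähler total space, and the rest is a direct application of the theorem.
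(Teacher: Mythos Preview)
Your proof is correct and follows the same route as the paper: the corollary is deduced directly from Theorem~\ref{d Nak posi of direct image sheaf} once one notes that, for a product fibration, the tangent fields $\partial/\partial t_j$ lift holomorphically, so the Kodaira--Spencer forms may be taken to be zero. Your explicit localization to a coordinate polydisc to secure a K\"ahler total space is a point the paper leaves implicit, but it is the right precaution.
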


\section{Singular Hermitian metric and Positivity}

\subsection{Singular Hermitian metric on vector bundle and positivity}

For any holomorphic vector bundle $E$, we introduce the definition of singular Hermitian metrics $h$ on $E$ and 
the $L^2$-subsheaf $\mathscr{E}(h)$ of $\mathscr{O}(E)$ analogous to the multiplier ideal sheaf.

\begin{definition}$(\mathrm{cf.\,[BP08,\,Section\,3],\,[PT18,\,Definition,\,2.2.1]})$ 
    We say that $h$ is a $\it{singular~Hermitian~metric}$ on $E$ if $h$ is a measurable map from the base manifold $X$ to the space of non-negative Hermitian forms on the fibers satisfying $0<\mathrm{det}\,h<+\infty$ almost everywhere.
\end{definition}

\begin{definition}\label{def of L2 subsheaf}$(\mathrm{cf.\,[deC98,\,Definition\,2.3.1]})$ 
    Let $h$ be a singular Hermitian metric on $E$. We define the $L^2$-subsheaf $\mathscr{E}(h)$ of germs of local holomorphic sections of $E$ by
    \begin{align*}
        \mathscr{E}(h)_x:=\{s_x\in\mathscr{O}(E)_x\mid|s_x|^2_h~ \mathrm{is ~locally ~integrable ~around~} x\}.
    \end{align*}
\end{definition}

If $E$ is a holomorphic line bundle then we get $\mathscr{E}(h)=\mathscr{O}(E)\otimes\mathscr{I}(h)$.
Moreover, we define positivity and negativity such as Griffiths and dual Nakano. 

\begin{definition}\label{def Griffiths semi-posi sing}$(\mathrm{cf.\,[BP08,\,Definition\,3.1],\,[PT18,\,Definition~2.2.2]})$  
    We say that a singular Hermitian metric $h$ is 
    \begin{itemize}
        \item [(1)] $\textit{Griffiths semi-negative}$ if $|u|_h$ is plurisubharmonic for any local holomorphic section $u\in\mathscr{O}(E)$ of $E$.
        \item [(2)] $\textit{Griffiths semi-positive}$ if the dual metric $h^*$ on $E^*$ is Griffiths semi-negative.
    \end{itemize}
\end{definition}

\begin{definition}\label{def Nakano semi-negative as Raufi}$\mathrm{(cf.\,[Rau15,\,Section\,1]})$ 
    We say that a singular Hermitian metric $h$ on $E$ is $\it{Nakano}$ $\it{semi}$-$\it{negative}$ if the $(n-1,n-1)$-form $T^h_u$ is plurisubharmonic for any $n$-tuple of local holomorphic sections $u=(u_1,\cdots,u_n)$.
\end{definition}

\begin{definition}\label{def dual Nakano semi-posi sing}$\mathrm{(cf.\,[Wat22a,\,Definition\,4.5]})$ 
    We say that a singular Hermitian metric $h$ on $E$ is $\it{dual}$ $\it{Nakano}$ $\it{semi}$-$\it{positive}$ if the dual metric $h^*$ on $E^*$ is Nakano semi-negative.
\end{definition}

For singular Hermitian metrics, we cannot always define the curvature currents with measure coefficients (see \cite{Rau15}).
However, the above definitions can be defined by not using the curvature currents.
In general, the dual of a Nakano negative bundle is not Nakano positive then we cannot define Nakano semi-positivity as in the case of Griffiths,
but this definition of dual Nakano semi-positivity is natural.
The characterization of Nakano semi-positivity using $L^2$-estimate by the following definition is already known by Deng-Ning-Wang-Zhou's work (see \cite{DNWZ22}).

\begin{definition}\label{def optimal L2 estimate on vector bdl}$\mathrm{(cf.\,[DNWZ22,\,Definition\,1.1]})$
    Let $X$ be a complex manifold of dimension $n$ and $U$ be an open subset of $X$ with a \kah metric $\omega$ on $U$ which admits a positive Hermitian holomorphic line bundle. Let $(E,h)$ be a (singular) Hermitian vector bundle over $X$.
    We call $(E,h)$ satisfies \textit{the optimal} $L^2$-\textit{estimate} on $U$ if for any positive Hermitian holomorphic line bundle $(A,h_A)$ on $U$, for any $f\in\mathscr{D}^{n,1}(U,E\otimes A)$ with $\overline{\partial}f=0$ on $U$, 
    there is $L^2_{n,0}(U,E\otimes A)$ satisfying $\overline{\partial}u=f$ on $U$ and 
    \begin{align*}
        \int_U|u|^2_{h\otimes h_A,\omega}dV_\omega\leq\int_U\langle B^{-1}_{A,h_A}f,f\rangle_{h\otimes h_A,\omega}dV_\omega,
    \end{align*}
    provide that the right hand side is finite, where $B_{A,h_A}=[i\Theta_{A,h_A}\otimes\mathrm{id}_E,\Lambda_\omega]$.
\end{definition}

Here, $\mathscr{D}$ denotes the space of $C^\infty$ sections with compact support.
Modifying the optimal $L^2$-estimate condition, one definition of Nakano semi-positivity that establishes vanishing theorems was introduced in \cite{Ina22}.

\begin{definition}\label{def Nakano semi-posi sing by Inayama}$\mathrm{(cf.\,[Ina22,\,Definition\,1.1]})$ 
    Assume that $h$ is a Griffiths semi-positive singular Hermitian metric. We say that $h$ is 
    (\textit{globally}) \textit{Nakano semi-positive} on $X$ if for any Stein coordinate $S\subseteq X$ such that $E|_S$ is trivial, any \kah metric $\omega_S$ on $S$,
    any smooth strictly plurisubharmonic function $\psi$ on $S$, any positive integer $q\in\{1,\cdots,n\}$ and any $\overline{\partial}$-closed $f\in L^2_{n,q}(S,E,he^{-\psi},\omega_S)$ 
    there exists $u\in L^2_{n,q-1}(S,E,he^{-\psi},\omega_S)$ satisfying $\overline{\partial}u=f$ and 
    \begin{align*}
        \int_S|u|^2_{h,\omega_S}e^{-\psi}dV_{\omega_S}\leq\int_S\langle B^{-1}_{\psi,\omega_S}f,f\rangle_{h,\omega_S}e^{-\psi}dV_{\omega_S},
    \end{align*}
    where $B_{\psi,\omega_S}=[\idd\psi\otimes\mathrm{id}_E,\Lambda_{\omega_S}]$. Here we assume that the right-hand side is finite.
\end{definition}

It is already known that multiplier ideal sheaves are coherent in \cite{Nad89}.
After that, Hosono and Inayama proved that the $L^2$-subsheaf $\mathscr{E}(h)$ is coherent if $h$ is Nakano semi-positive in the singular sense as in Definition \ref{def Nakano semi-posi sing by Inayama} (or \ref{def optimal L2 estimate on vector bdl}) in \cite{HI20} and \cite{Ina22}.

\subsection{Singular Hermitian metrics on torsion-free sheaves and positivity}

Let $X$ be a complex manifold and $\mathscr{F}$ be a torsion-free coherent sheaf on $X$. 
Let $X(\mathscr{F})\subseteq X$ denote the maximal open subset where $\mathscr{F}$ is locally free, 
then $Z_\mathscr{F}:=X\setminus X(\mathscr{F})$ is a closed analytic subset of codimension $\geq2$. 
If $\mathscr{F}\ne0$, then the restriction of $\mathscr{F}$ to the open subset $X(\mathscr{F})$ is a holomorphic vector bundle $F$ of some rank $r\geq1$.

\begin{definition}$\mathrm{(cf.\,[HPS18,\,Definition\,19.1]})$
    A \textit{singular Hermitian metric} on $\mathscr{F}$ is a singular Hermitian metric $h$ on the holomorphic vector bundle $F$. 
    We say that a metric is Griffiths semi-positive if $h$ has Griffiths semi-positive on $X(\mathscr{F})$.
\end{definition}

Using the natural inclusion $j:X(\mathscr{F})=X\setminus Z_\mathscr{F}\hookrightarrow X$, we define a natural extension of the $L^2$-subsheaf $\mathscr{E}(h)$ as follows.
Here, $j_*\mathcal{O}_{X\setminus Z_\mathscr{F}}\cong\mathcal{O}_X$ is already known.

\begin{definition}\label{def of ext L2 subsheaf}
    Let $h$ be a singular Hermitian metric on $\mathscr{F}$ which is a singular Hermitian metric on $F$ over $X(\mathscr{F})$. 
    We define the extended natural $L^2$-subsheaf $\mathscr{E}_X(h)$ with respect to $h$ over $X$ by $\mathscr{E}_X(h):=j_*\mathscr{E}(h)$.
\end{definition}


We introduce the definition of the minimal extension property and define Nakano (semi)-positivity with respect to singular Hermitian metrics on torsion-free coherent sheaves.

\begin{definition}$\mathrm{(cf.\,[HPS18,\,Definition\,20.1]})$
    We say that a singular Hermitian metric $h$ on $\mathscr{F}$ has the \textit{minimal extension property} 
    if there exists a nowhere dense closed analytic subset $Z\subseteq X$ with the following two properties:
    \begin{itemize}
        \item [(1)] $\mathscr{F}$ is locally free on $X\setminus Z$, or equivalently, $X\setminus Z\subseteq X(\mathscr{F})$,
        \item [(2)] For every embedding $\iota:B\hookrightarrow X$ with $x=\iota(0)\in X\setminus Z$, and every $v\in F_x$ with $|v|_{h}(x)=1$, 
                    there is a holomorphic section $s\in H^0(B,\iota^*\mathscr{F})$ such that 
        \begin{align*}
            s(0)=v \quad \mathrm{and} \quad \frac{1}{\mathrm{Vol}(B)}\int_B|s|^2_hdV_{B}\leq1,
        \end{align*}
        where $(F,h)$ denotes the restriction to the open subset $X(\mathscr{F})$.
    \end{itemize} 
\end{definition}

\begin{definition}\label{Def loc L2 Nak semi posi on coh sheaf}
    We say that a singular Hermitian metric $h$ on $\mathscr{F}$ is \textit{locally} $L^2$-\textit{type Nakano semi}-\textit{positive}  
    if there exists a nowhere dense closed analytic subset $Z\subseteq X$ with the following two properties:
    \begin{itemize}
        \item [(1)] $\mathscr{F}$ is locally free on $X\setminus Z$, or equivalently, $X\setminus Z\subseteq X(\mathscr{F})$,
        \item [(2)] For any $t\in X\setminus Z$, there exists a open neighborhood $U\subset X\setminus Z$ of $t$ such that a singular Hermitian metric $h$ on $E$ has the optimal $L^2$-estimate on $U$.
        Here $(F,h)$ denotes the restriction to the open subset $X(\mathscr{F})$.
    \end{itemize} 

    In particular, if we can take $Z=X\setminus X(\mathscr{F})$ then we say that $h$ is \textit{full locally} $L^2$-\textit{type Nakano semi}-\textit{positive} on $X(\mathscr{F})$.
\end{definition}

\begin{definition}
    We say that a singular Hermitian metric $h$ on $\mathscr{F}$ is \textit{locally} $L^2$-\textit{type Nakano positive}  
    if there exists a nowhere dense closed analytic subset $Z\subseteq X$ with the following two properties:
    \begin{itemize}
        \item [(1)] $\mathscr{F}$ is locally free on $X\setminus Z$, or equivalently, $X\setminus Z\subseteq X(\mathscr{F})$,
        \item [(2)] For any $t\in X\setminus Z$, there exist a open neighborhood $U\subset X\setminus Z$ of $t$ and a smooth strictly plurisubharmonic function $\psi$ on $U$ 
        such that a singular Hermitian metric $he^{\psi}$ on $E$ has the optimal $L^2$-estimate on $U$.
        Here $(F,h)$ denotes the restriction to the open subset $X(\mathscr{F})$.
    \end{itemize} 

    In particular, if we can take $Z=X\setminus X(\mathscr{F})$ then we say that $h$ is \textit{full locally} $L^2$-\textit{type Nakano positive} on $X(\mathscr{F})$.
\end{definition}

\begin{definition}
    We say that a singular Hermitian metric $h$ on $\mathscr{F}$ is (\textit{globally}) \textit{Nakano semi}-\textit{positive}  
    if there exists a nowhere dense closed analytic subset $Z\subseteq X$ with the following two properties:
    \begin{itemize}
        \item [(1)] $\mathscr{F}$ is locally free on $X\setminus Z$, or equivalently, $X\setminus Z\subseteq X(\mathscr{F})$,
        \item [(2)] $h$ is (globally) Nakano semi-positive on $X\setminus Z$.
    \end{itemize} 
\end{definition}

\section{Nakano positivity of canonical singular Hermitian metric}

\subsection{Canonical singular Hermitian metric on direct image sheaves}

Let $f:X\to Y$ is a projective and surjective holomorphic mapping between two connected complex manifolds, with $\mathrm{dim}\,X=n+m$ and $\mathrm{dim}\,Y=m$, but there may be singular fiber.
Let 
$L\to X$ be a holomorphic line bundle equipped with a pseudo-effective singular Hermitian metric $h$.
In this subsection, we define the canonical singular Hermitian metric on the direct image sheaf $\mathcal{E}:=f_*(\omega_{X/Y}\otimes L\otimes\mathscr{I}(h))$
in the same way as in \cite{HPS18}.

Construct a Hermitian metric of $\mathcal{E}$ over a Zariski-open subset $Y\setminus Z$ where everything is nice, and then to extend it over the bad locus $Z$.
First, we choose a nowhere dense closed analytic subset $Z\subseteq Y$ with the following three properties:
\begin{itemize}
    \item [(1)] The morphism $f$ is submersion over $Y\setminus Z$,
    \item [(2)] Both $\mathcal{E}$ and the quotient sheaf $f_*(\omega_{X/Y}\otimes L)/\mathcal{E}$ are locally free on $Y\setminus Z$,
    \item [(3)] On $Y\setminus Z$, the locally free sheaf $f_*(\omega_{X/Y}\otimes L)$ has the base change property.
\end{itemize}

By the base change theorem, the third condition will hold as long as the coherent sheaves $R^if_*(\omega_{X/Y}\otimes L)$ are locally free on $Y\setminus Z$. 
The restriction of $\mathcal{E}$ to the open subset $Y\setminus Z$ is a holomorphic vector bundle $E$ of some rank $r\geq1$. 
The second and third condition together guarantee that 
\begin{align*}
    E_t:=\mathcal{E}|_t\subseteq f_*(\omega_{X/Y}\otimes L)|_t=H^0(X_t,\omega_{X_t}\otimes L|_{X_t})
\end{align*}
whenever $t\in Y\setminus Z$.

\begin{lemma}$\mathrm{(cf.\,[HPS18,\,Lemma\,22.1]})$
    For any $t\in Y\setminus Z$, we have inclusions 
    \begin{align*}
        H^0(X_t,\omega_{X_t}\otimes L|_{X_t}\otimes\mathscr{I}(h|_{X_t}))\subseteq E_t\subseteq H^0(X_t,\omega_{X_t}\otimes L|_{X_t}).
    \end{align*}
\end{lemma}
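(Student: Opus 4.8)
The plan is to reduce this to the semicontinuity/base-change machinery used in the construction of $E$ over $Y\setminus Z$, and then to identify the three relevant cohomology groups at a point $t\in Y\setminus Z$. First I would fix $t\in Y\setminus Z$ and recall that, by the defining properties of $Z$, the fiber $X_t$ is a smooth projective variety, $f$ is a submersion near $X_t$, and $f_*(\omega_{X/Y}\otimes L)$ satisfies base change at $t$, so that the canonical identification
\begin{align*}
    E_t = \mathcal{E}|_t \subseteq f_*(\omega_{X/Y}\otimes L)|_t \cong H^0(X_t,\omega_{X_t}\otimes L|_{X_t})
\end{align*}
holds. The right-hand inclusion $E_t\subseteq H^0(X_t,\omega_{X_t}\otimes L|_{X_t})$ is then immediate from this identification together with condition (2) (that the quotient $f_*(\omega_{X/Y}\otimes L)/\mathcal{E}$ is locally free on $Y\setminus Z$, hence torsion-free, so $\mathcal{E}$ is saturated and the fiberwise inclusion persists).

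For the left-hand inclusion, the idea is to produce, from a section $s\in H^0(X_t,\omega_{X_t}\otimes L|_{X_t}\otimes\mathscr{I}(h|_{X_t}))$, a local section of $\mathcal{E}$ near $t$ whose restriction to $X_t$ is $s$. I would use the Ohsawa--Takegoshi-type $L^2$-extension theorem (as already invoked in the smooth setting in the excerpt, via \cite{OT87}): since $\int_{X_t}|s|^2 e^{-\varphi}<+\infty$ by the multiplier ideal condition, $s\wedge dt$ extends to a holomorphic section $\tilde{s}$ of $\omega_X\otimes L$ over $f^{-1}(\Omega)$ for a small neighborhood $\Omega$ of $t$, with finite $L^2$ norm against $e^{-\varphi}$. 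By Fubini this section lies in $H^0(f^{-1}(\Omega),\omega_{X/Y}\otimes L\otimes\mathscr{I}(h))$ after possibly shrinking $\Omega$, i.e. it defines a local section of $\mathcal{E}$, and its value at $t$ recovers $s$. This shows $H^0(X_t,\omega_{X_t}\otimes L|_{X_t}\otimes\mathscr{I}(h|_{X_t}))$ maps into the fiber $E_t$; injectivity of this map is clear since the restriction-to-$X_t$ map on global sections of a torsion-free sheaf is injective once we quotient correctly.

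The main obstacle I anticipate is the interchange between the global multiplier ideal sheaf $\mathscr{I}(h)$ on $X$ (or on $f^{-1}(\Omega)$) and the fiberwise one $\mathscr{I}(h|_{X_t})$: a priori $\mathscr{I}(h)\cdot\mathcal{O}_{X_t}$ need only be contained in $\mathscr{I}(h|_{X_t})$, and one wants to know that a section which is $L^2$ on $X_t$ can be extended while keeping the extension $L^2$ on the total space (not merely on fibers). This is exactly the strong form of the Ohsawa--Takegoshi theorem with optimal constants, and I would cite it in the form used in \cite{HPS18} (following \cite{BP08}, \cite{PT18}). A secondary point to check carefully is that one may choose $\Omega$ small enough, independently of the finitely many generators of the stalk, so that the extended sections genuinely define a section of the coherent sheaf $\mathcal{E}$ near $t$ — this is a routine coherence/Noetherian argument but should be mentioned. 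Modulo these, the lemma follows by assembling the two inclusions.
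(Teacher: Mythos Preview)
The paper does not give its own proof of this lemma; it simply cites \cite{HPS18}, Lemma~22.1, and in fact the right-hand inclusion $E_t\subseteq H^0(X_t,\omega_{X_t}\otimes L|_{X_t})$ is already asserted in the paragraph preceding the lemma as a consequence of conditions~(2) and~(3) on $Z$. Your outline is the standard one used in \cite{HPS18}: base change for the right inclusion, and Ohsawa--Takegoshi $L^2$-extension for the left inclusion, and it is correct.

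Two small points of tidying. First, the appeal to Fubini is misplaced: once the extension $\tilde{s}$ satisfies $\int_{f^{-1}(\Omega)}|\tilde{s}|^2e^{-\varphi}<\infty$, this is a \emph{local} integrability statement at every point of $f^{-1}(\Omega)$ (after shrinking $\Omega$ to be relatively compact), and that is exactly the definition of $\tilde{s}$ lying in $\omega_X\otimes L\otimes\mathscr{I}(h)$; no fiberwise argument is needed here. Second, your remark about injectivity is unnecessary and slightly confused: both $H^0(X_t,\omega_{X_t}\otimes L|_{X_t}\otimes\mathscr{I}(h|_{X_t}))$ and $E_t$ are already identified as subspaces of the ambient space $H^0(X_t,\omega_{X_t}\otimes L|_{X_t})$, and the extension argument shows the first subspace is contained in the second, so there is no separate injectivity to check.
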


Here, we can immediately see that the two subspaces 
\begin{align*}
    H^0(X_t,\omega_{X_t}\otimes L|_{X_t}\otimes\mathscr{I}(h|_{X_t}))\subseteq E_t
\end{align*}
are equal for almost everywhere $t\in Y\setminus Z$. 
But unless $\mathcal{E}=0$, the two subspaces are different for example at points where $h|_{X_t}$ is identically equal to $+\infty$.

On each $E_t$ with $t\in Y\setminus Z$, we can define a singular Hermitian metric $H$ as follows.
For any element $\alpha\in E_t\subseteq H^0(X_t,\omega_{X_t}\otimes L|_{X_t})$,
we can integrate over the compact complex manifold $X_t$ and define the inner product of $\alpha$ with respect to $H$ by
\begin{align*}
    |\alpha|^2_H(t):=\int_{X_t}|\alpha|^2_h\in[0,+\infty].
\end{align*}
Clearly $|\alpha|_H(t)<+\infty$ if and only if $\alpha\in H^0(X_t,\omega_{X_t}\otimes L|_{X_t}\otimes\mathscr{I}(h|_{X_t}))$.
By Ehresmann's fibration theorem and Fubini's theorem, the function $t\mapsto |s|_H(t)$ is measurable for any local holomorphic section $s$ of $E$.

From the discussion in \cite{HPS18}, the singular Hermitian metric $H$ over $Y\setminus Z$ is well-defined on the entire open set $Y(\mathcal{E})$.
Then we say that this extended metric $H$ on $E$ over $Y(\mathcal{E})$ is a \textit{canonical singular Hermitian metric} of $\mathcal{E}$.
Finally, we define the following. 

\begin{definition}
    We define the set $\Sigma_H$ on $Y$ related to the unbounded-ness of $H$ by
    \begin{align*}
        \Sigma_H:=\{t\in Y\mid \mathcal{E}_t\subsetneq H^0(X_t,K_{X_t}\otimes L|_{X_t})\}.
    \end{align*}
\end{definition}

Here, for any $t\in Y\setminus Z$ if $\mathscr{I}(h|_{X_t})=\mathcal{O}_{X_t}$ then $t\notin\Sigma_H$ and $H(t)$ is bounded by $\int_{X_t}e^{-\varphi}<+\infty$, where $h=e^{-\varphi}$ on local.
And for almost everywhere $t\notin\Sigma_H$, we get $H^0(X_t,K_{X_t}\otimes L|_{X_t}\otimes\mathscr{I}(h|_{X_t}))=\mathcal{E}_t=H^0(X_t,K_{X_t}\otimes L|_{X_t})$.
Let $\Sigma_h:=\{t\in Y\mid\int_{X_t}e^{-\varphi}=+\infty\}$ be a set related to the unbounded-ness of $h$, then we have that $\Sigma_H\setminus Z\subseteq\Sigma_h\setminus Z$.

\subsection{An approach of globally Nakano semi-positivity of $H$}

We consider the case where $X$ is projective.
By projectivity of $X$, there exist two hypersurfaces $Z_1$ and $Z_2$ such that $X\setminus Z_1$ is Stein and that $S:=X\setminus (Z_1\cup Z_2)$ is also Stein and $L|_S$ is trivial.
Let $\varphi:=-\log h|_S$ then $\varphi$ is plurisubharmonic function on $S$ and $h=e^{-\varphi}$ on $S$.
By [FN80,\,Theorem\,5.5], there exists a sequence of smooth plurisubharmonic functions $(\varphi_\nu)_{\nu\in\mathbb{N}}$ on $S$ decreasing to $\varphi$ a.e. pointwise.
Here, there is a smooth exhaustive strictly plurisubharmonic function $\psi$ on $S$ such that $\sup_S\psi=+\infty$. 
Let $S_\nu:=\{z\in S\mid\psi(z)<1/\nu\}$ be a Stein sublevel set.

Let $S_t^\nu:=X_t\cap S_\nu$ be Stein subsets. We define the Hermitian metric $H_\nu$ on $E$ over $Y\setminus Z$ by for any elements $u,v\in E_t$, 
\begin{align*}
    (u,v)_{H_\nu}(t):=\int_{\overline{S_t^\nu}}u\wedge\overline{v}e^{-\varphi_\nu}=\int_{S_t^\nu}u\wedge\overline{v}e^{-\varphi_\nu}.
\end{align*}
Then $H_\nu$ is smooth by closed-ness of $\overline{S_t^\nu}$, Ehresmann's theorem and Fubini's theorem.

\begin{question}
    Is this smooth Hermitian metric $H_\nu$ 
is Nakano semi-positive?
\end{question}

\begin{remark}
    If the Question is correct, then $H$ has (globally) Nakano semi-positivity.
    In fact, $(H_\nu)_{\nu\in\mathbb{N}}$ is a sequence of smooth Nakano semi-positive Hermitian metrics increasing to $H$ pointwise a.e., and we can use the following proposition.
\end{remark}

\begin{proposition}$\mathrm{(cf.\,[Ina22,\,Proposition\,6.1]})$
    Let $h$ be a singular Hermitian metric on a holomorphic vector bundle. 
    If there exists a sequence of smooth Nakano semi-positive metrics $(h_\nu)_{\nu\in\mathbb{N}}$ increasing to $h$ pointwise a.e.,
    then $h$ is (globally) Nakano semi-positive.
\end{proposition}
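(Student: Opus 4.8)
The plan is to verify the two clauses of Definition \ref{def Nakano semi-posi sing by Inayama} for $h$ directly. The first clause, that $h$ is Griffiths semi-positive, is the easy part: each $h_\nu$, being smooth and Nakano semi-positive, is Griffiths semi-positive, so the dual metrics $h_\nu^*$ are Griffiths semi-negative; since $h_\nu\nearrow h$ pointwise a.e. one has $h_\nu^*\searrow h^*$ a.e., and for every local holomorphic section $\sigma$ of $E^*$ the functions $|\sigma|_{h_\nu^*}$ decrease a.e. to $|\sigma|_{h^*}$, whose limit is again plurisubharmonic; hence $h^*$ is Griffiths semi-negative, i.e. $h$ is Griffiths semi-positive. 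I also record for later use that $h_\nu\le h$ a.e.

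For the second clause I would fix, as in the definition, a Stein coordinate $S$ with $E|_S$ trivial, a \kah metric $\omega_S$ on $S$, a smooth strictly plurisubharmonic $\psi$ on $S$, an index $q\in\{1,\dots,n\}$, and a $\overline{\partial}$-closed $f\in L^2_{n,q}(S,E,he^{-\psi},\omega_S)$ with
\begin{align*}
    C:=\int_S\langle B^{-1}_{\psi,\omega_S}f,f\rangle_{h,\omega_S}e^{-\psi}\,dV_{\omega_S}<\infty,
\end{align*}
where $B_{\psi,\omega_S}=[\idd\psi\otimes\mathrm{id}_E,\Lambda_{\omega_S}]>0$ on $\Lambda^{n,q}T^*_S\otimes E$. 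For each $\nu$, since $i\Theta_{E,h_\nu e^{-\psi}}=i\Theta_{E,h_\nu}+\idd\psi\otimes\mathrm{id}_E$ and $h_\nu$ is Nakano semi-positive (so $[i\Theta_{E,h_\nu},\Lambda_{\omega_S}]\ge0$ on $(n,q)$-forms), the curvature operator $A_\nu:=[i\Theta_{E,h_\nu e^{-\psi}},\Lambda_{\omega_S}]$ satisfies $A_\nu\ge B_{\psi,\omega_S}>0$ on $(n,q)$-forms, hence $A_\nu^{-1}\le B^{-1}_{\psi,\omega_S}$; combined with $h_\nu\le h$ and the monotonicity of $m\mapsto\langle B^{-1}_{\psi,\omega_S}f,f\rangle_m$ in the bundle metric $m$, the Hörmander right-hand side for $(E,h_\nu e^{-\psi})$ is bounded by $C$. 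Putting a complete \kah metric on the Stein manifold $S$ as the auxiliary metric and applying Theorem \ref{Hormander L2-estimate} to $(E,h_\nu e^{-\psi})$ with the (possibly incomplete) \kah metric $\omega_S$, I would obtain $u_\nu\in L^2_{n,q-1}(S,E,h_\nu e^{-\psi},\omega_S)$ with $\overline{\partial}u_\nu=f$ and
\begin{align*}
    \int_S|u_\nu|^2_{h_\nu,\omega_S}e^{-\psi}\,dV_{\omega_S}\le\int_S\langle B^{-1}_{\psi,\omega_S}f,f\rangle_{h_\nu,\omega_S}e^{-\psi}\,dV_{\omega_S}\le C.
\end{align*}

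The remaining step is the passage to the limit. Because $h_1\le h_\nu$, the family $(u_\nu)$ is bounded in $L^2$ with respect to the \emph{fixed} smooth metric $h_1e^{-\psi}$, hence in $L^2_{\mathrm{loc}}$; a diagonal argument over an exhaustion of $S$ by relatively compact open sets extracts a subsequence converging weakly in $L^2_{\mathrm{loc}}$ to some $u$ with $\overline{\partial}u=f$ in the sense of currents. To estimate $u$, fix $\mu$; for $\nu\ge\mu$ one has $|u_\nu|^2_{h_\mu,\omega_S}\le|u_\nu|^2_{h_\nu,\omega_S}$, so for any relatively compact $U$ in $S$ weak lower semicontinuity of the $L^2(U,h_\mu e^{-\psi},\omega_S)$-norm together with monotone convergence for $\nu\mapsto\langle B^{-1}_{\psi,\omega_S}f,f\rangle_{h_\nu,\omega_S}\nearrow\langle B^{-1}_{\psi,\omega_S}f,f\rangle_{h,\omega_S}$ gives
\begin{align*}
    \int_U|u|^2_{h_\mu,\omega_S}e^{-\psi}\,dV_{\omega_S}\le\liminf_\nu\int_S|u_\nu|^2_{h_\nu,\omega_S}e^{-\psi}\,dV_{\omega_S}\le C.
\end{align*}
Letting $U$ exhaust $S$ and then $\mu\to\infty$, with monotone convergence once more for $\mu\mapsto|u|^2_{h_\mu,\omega_S}\nearrow|u|^2_{h,\omega_S}$, yields $\int_S|u|^2_{h,\omega_S}e^{-\psi}\,dV_{\omega_S}\le C$, which is exactly the estimate demanded by Definition \ref{def Nakano semi-posi sing by Inayama}. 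Together with the Griffiths semi-positivity established first, this proves that $h$ is (globally) Nakano semi-positive.

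The hard part is not any single estimate but the bookkeeping of the varying bundle metrics in the limit: weak lower semicontinuity requires a fixed reference metric $h_\mu$, whereas both the target norm and the bound involve the limit metric $h$, which forces the two-sided squeeze $h_\mu\le h_\nu\le h$ together with two separate monotone-convergence passages. This rests on the (elementary) monotonicity of $m\mapsto\langle B^{-1}_{\psi,\omega_S}f,f\rangle_m$ in the bundle metric, which I would justify by diagonalizing the positive operator $B^{-1}_{\psi,\omega_S}$ on the $(n,q)$-form factor — it acts as the identity on the $E$-factor — and rewriting the quadratic form as a sum of squared lengths measured by $m$. Everything else, namely the Griffiths clause and the per-$\nu$ application of Hörmander's theorem, is routine.
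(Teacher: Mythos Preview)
The paper does not give a proof of this proposition; it is stated with a citation to \cite{Ina22} and used as a black box. Your argument is correct and is essentially the standard proof one would expect: apply H\"ormander's estimate (Theorem \ref{Hormander L2-estimate}) to each smooth $h_\nu e^{-\psi}$, use the operator inequality $A_\nu\ge B_{\psi,\omega_S}$ and the metric monotonicity $h_\nu\le h$ to bound the right-hand side uniformly by $C$, then extract a weak limit and pass the estimate through via lower semicontinuity and monotone convergence. The two-metric squeeze $h_\mu\le h_\nu\le h$ you describe is exactly the right bookkeeping, and your justification of the monotonicity of $m\mapsto\langle B^{-1}_{\psi,\omega_S}f,f\rangle_m$ via the fact that $B_{\psi,\omega_S}$ acts as the identity on the $E$-factor is correct.

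One minor simplification: since $\int_S|u_\nu|^2_{h_1,\omega_S}e^{-\psi}\,dV_{\omega_S}\le C$ already bounds the sequence in the single Hilbert space $L^2_{n,q-1}(S,E,h_1e^{-\psi},\omega_S)$, you can extract a globally weakly convergent subsequence directly without the diagonal argument over an exhaustion; the remainder of your limit passage is unchanged. Also, in the Griffiths clause, the decreasing a.e.\ limit of the psh functions $|\sigma|_{h_\nu^*}$ is psh after usc regularization, and this regularization agrees a.e.\ with $|\sigma|_{h^*}$ since $0<\det h<+\infty$ a.e.; you may want to make that step explicit.
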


In the same way as above, extend the smooth Hermitian metric $H_\nu$ on $E$ to a smooth Hermitian metric $\widetilde{H}_\nu$ on $F=f_*(\omega_{X/Y}\otimes L)$ over $Y$.
Fixed $\nu$, let 
\begin{align*}
    G_t^\nu:=\{f\in H^0(S_t^\nu,K_{S_t^\nu})\mid \int_{S_t^\nu}|f|^2e^{-\varphi_\nu}<+\infty\}
\end{align*}
be fibers where $K_{S_t^\nu}=K_{X_t}|_{S_\nu}$ and $L|_{S_\nu}$ is trivial. 
Then there is a natural inclusions $F_t=H^0(X_t,K_{X_t}\otimes L)\hookrightarrow G_t^\nu$. 
We define a infinite vector bundle $G^\nu:=\bigcup_{t\in Y}G_t^\nu\to Y$ and a Hermitian metric $H_{G^\nu}$ by for any $f,g\in G^\nu_t$,
\begin{align*}
    (f,g)_{H_{G^\nu}}(t):=\int_{S_t^\nu}f\wedge\overline{g}e^{-\varphi_\nu}.
\end{align*} 
Here $F$ is a natural subbundle of $G^\nu$ and $f|_{S_\nu}:S_\nu\to Y$ is Stein fibration.

\begin{remark}
    From Berndtsson-P\u{a}un's work \cite{BP08}, for any subset $U\subset Y\setminus Z$ such that $K_U$ is trivial we have that the relative Bergman kernel of $G^\nu|_U$ to Stein fibrations is plurisubharmonic. Hence, $H_{G^\nu}$ is Griffiths semi-positive.
\end{remark}

\begin{question}
    Does this Hermitian metric $H_{G^\nu}$ on $G^\nu$ have Nakano semi-positivity?
    And if this is true, does $H_{G^\nu}$ induce Nakano semi-positivity of smooth Hermitian metrics $H_\nu$ and $\widetilde{H}_\nu$ on $E=\mathcal{E}|_{Y(\mathcal{E})}$ and $F=f_*(\omega_{X/Y}\otimes L)$ respectively?
\end{question}

\subsection{Locally $L^2$-type Nakano (semi)-positivity of $H$}

Let $f:X\to Y$ be a projective surjective morphism between two connected complex manifolds and $L$ be a holomorphic vector bundle on $X$ equipped with a pseudo-effective Hermitian metric $h$.
For the canonical singular Hermitian metric $H$ of the direct image sheaf $\mathcal{E}=f_*(\omega_{X/Y}\otimes L\otimes\mathscr{I}(h))$, the following theorem is known with respect to the positivity property.

\begin{theorem}\label{HPS18, Theorem21.1}$\mathrm{(cf.\,[HPS18,\,Theorem\,21.1]})$
    The direct image sheaf $\mathcal{E}=f_*(\omega_{X/Y}\otimes L\otimes\mathscr{I}(h))$ has a canonical singular Hermitian metric $H$.
    This metric is Griffiths semi-positive and satisfies the minimal extension property.
\end{theorem}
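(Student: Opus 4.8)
The plan is to construct the metric $H$ fiberwise, verify it is a genuine singular Hermitian metric on the vector bundle $E=\mathcal{E}|_{Y(\mathcal{E})}$, establish Griffiths semi-positivity by exhibiting local psh weights of $\log|\cdot|$ for the dual, and finally deduce the minimal extension property from the Ohsawa--Takegoshi extension theorem with optimal constant. Concretely, first I would fix a nowhere dense analytic set $Z\subseteq Y$ as in properties (1)--(3) preceding the statement (using generic smoothness of $f$, local freeness of $\mathcal{E}$ and of the quotient, and the base change theorem applied to $R^if_*(\omega_{X/Y}\otimes L)$), so that over $Y\setminus Z$ the sheaf $\mathcal{E}$ is a vector bundle $E$ with $E_t\subseteq H^0(X_t,\omega_{X_t}\otimes L|_{X_t})$. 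On $E_t$ define $|\alpha|_H^2(t):=\int_{X_t}|\alpha|_h^2$, which by the lemma following the statement is finite exactly when $\alpha\in H^0(X_t,\omega_{X_t}\otimes L|_{X_t}\otimes\mathscr{I}(h|_{X_t}))$; by Ehresmann's fibration theorem and Fubini, $t\mapsto|s|_H^2(t)$ is measurable for any local holomorphic section $s$, and $0<\det H<+\infty$ a.e.\ because $H$ restricts on a dense set of fibers to the honest $L^2$ metric on $H^0(X_t,\omega_{X_t}\otimes L|_{X_t}\otimes\mathscr{I}(h|_{X_t}))=E_t$.

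The Griffiths semi-positivity is, by Definition \ref{def Griffiths semi-posi sing}, the statement that the dual metric $H^*$ on $E^*$ is Griffiths semi-negative, i.e.\ $|\xi|_{H^*}$ is plurisubharmonic for every local holomorphic section $\xi$ of $E^*$. The key point here is Berndtsson--P\u{a}un's positivity of direct images: writing $|\xi|_{H^*}(t)=\sup\{|\langle\xi,\alpha\rangle|\;:\;\alpha\in E_t,\;|\alpha|_H(t)\le1\}$ realizes $\log|\xi|_{H^*}$ as an upper envelope of a family of plurisubharmonic functions coming from holomorphic $L^2$-sections of $\omega_{X/Y}\otimes L$ over $f^{-1}(\Omega)$, whose Bergman-type potentials are psh by \cite{BP08} (cf.\ the remark after the Question in \S4.2); taking an upper semicontinuous regularization and using that the envelope is locally bounded above gives psh-ness of $\log|\xi|_{H^*}$, hence of $|\xi|_{H^*}$. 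Equivalently one can run the argument of \cite{HPS18} directly: reduce to the case $Y$ a disc, $E$ trivialized, and use the optimal Ohsawa--Takegoshi extension along a section to show that every unit vector in $E_{t}$ extends with controlled $L^2$-norm, which is exactly the subaveraging/psh property of the dual weight.

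For the minimal extension property (Definition \ref{def of ...}), take $Z$ as above, let $\iota:B\hookrightarrow Y$ be an embedded polydisc with $x=\iota(0)\in Y\setminus Z$, and $v\in E_x$ with $|v|_H(x)=1$, i.e.\ $v$ corresponds to $\alpha\in H^0(X_x,\omega_{X_x}\otimes L|_{X_x}\otimes\mathscr{I}(h|_{X_x}))$ with $\int_{X_x}|\alpha|_h^2=1$. Shrinking $B$ so that $f$ is a submersion over it and $X_B:=f^{-1}(B)$ retracts onto $X_x$, apply the Ohsawa--Takegoshi $L^2$-extension theorem with optimal constant (in the fibered form, as in \cite{HPS18} or \cite{DNWZ22}) to extend $\alpha\wedge dt$ to a holomorphic section $U\in H^0(X_B,\omega_{X_B}\otimes L)$ with $\int_{X_B}|U|_h^2\le C(B)\int_{X_x}|\alpha|_h^2$, where the constant $C(B)=\mathrm{Vol}(B)$ is the sharp one for the trivial $\mathscr{I}$-multiplier on $B$; by Fubini this $U$ defines $s\in H^0(B,\iota^*\mathscr{F})$ with $s(0)=v$ and $\frac1{\mathrm{Vol}(B)}\int_B|s|_H^2\,dV_B\le1$. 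The main obstacle I expect is not any single step in isolation but the bookkeeping needed to make the fiberwise construction globally coherent and the extension uniform: one must handle the possible jump of $\mathscr{I}(h|_{X_t})$ on the bad set $Z$ and the analytic set $\Sigma_h$ where $\int_{X_t}e^{-\varphi}=+\infty$, and one must arrange the Ohsawa--Takegoshi estimate with a constant that is genuinely $\mathrm{Vol}(B)$ and does not degenerate as $B$ shrinks near $Z$ — this is precisely where the base-change hypothesis (3) and the properness of $f$ are used.
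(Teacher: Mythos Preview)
The paper does not supply its own proof of this theorem: it is quoted verbatim as a known result from \cite{HPS18} (see the sentence ``the following theorem is known'' immediately preceding the statement), and the surrounding section \S5.1 only recalls the construction of $H$ without arguing semi-positivity or the minimal extension property. Your sketch is therefore not to be compared against anything in the paper itself; rather, it is a faithful outline of the original argument in \cite{HPS18} (building on \cite{BP08}, \cite{PT18}): the fiberwise $L^2$-metric, Griffiths semi-positivity via plurisubharmonicity of $\log|\xi|_{H^*}$ obtained from Bergman-kernel/variational considerations, and the minimal extension property from the optimal Ohsawa--Takegoshi theorem applied on $f^{-1}(B)$. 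One minor point: in the minimal extension step you should be careful that the section $U$ produced by Ohsawa--Takegoshi actually lands in $f_*(\omega_{X/Y}\otimes L\otimes\mathscr{I}(h))$ and not merely in $f_*(\omega_{X/Y}\otimes L)$; this follows because the $L^2$-bound $\int_{X_B}|U|_h^2<\infty$ forces $U$ to be a section of $\omega_{X/Y}\otimes L\otimes\mathscr{I}(h)$, but it is worth making explicit.
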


In this subsection, we show that this metric $H$ on $\mathcal{E}$ has locally Nakano (semi)-positivity.
This proof is inspired by the proof of the smooth case using $L^2$-estimates in [DNWZ22,\,Theorem\,1.6]. 

\begin{theorem}\label{H of full loc L2 Nak semi-posi in subsection}
    Let $H$ be a canonical singular Hermitian metric on $\mathcal{E}=f_*(\omega_{X/Y}\otimes L\otimes\mathscr{I}(h))$ which induced by $h$.
    If $X$ is projective and there exists an analytic set $A$ such that $\Sigma_H\subseteq A$ then $H$ is full locally $L^2$-type Nakano semi-positive on $Y(\mathcal{E})$.
\end{theorem}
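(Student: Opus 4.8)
The plan is to reduce the statement to a local $L^2$-estimate on $Y(\mathcal{E})$ by pulling the estimate back from $X$ via the $L^2$-extension philosophy of Ohsawa--Takegoshi, in the spirit of [DNWZ22, Theorem~1.6]. First I would fix a point $t_0 \in Y(\mathcal{E})$ and choose a small Stein coordinate ball $U \subseteq Y(\mathcal{E})$ around $t_0$ over which $E = \mathcal{E}|_{Y(\mathcal{E})}$ is trivial, the morphism $f$ is a submersion (shrinking so that $U \cap Z = \emptyset$ where $Z$ is the bad locus), and moreover $U \cap A = \emptyset$ using the hypothesis $\Sigma_H \subseteq A$ — this is the crucial point, because on such a $U$ the canonical metric $H$ is \emph{bounded}, i.e. $\mathcal{E}_t = H^0(X_t, K_{X_t}\otimes L|_{X_t})$ for all $t \in U$ and $\mathscr{I}(h|_{X_t}) = \mathcal{O}_{X_t}$ for all such $t$. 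Then for an auxiliary positive line bundle $(A_0, h_{A_0})$ on $U$ (using the notation $A_0$ to avoid clashing with the analytic set $A$) and a $\overline{\partial}$-closed $g \in \mathscr{D}^{m,1}(U, E \otimes A_0)$, I would regard $g$ as a family of $(m,1)$-forms and, fiber by fiber, use the representation of sections of $f_*(K_{X/Y}\otimes L)$ as fiberwise-holomorphic forms to lift $g$ to an $L \otimes f^*A_0$-valued $(n+m, 1)$-form $\tilde g$ on $f^{-1}(U) \subseteq X$ that is $\overline{\partial}$-closed along fibers and whose $f$-direct image recovers $g$.

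The second step is to apply Hörmander's $L^2$-estimate (Theorem~\ref{Hormander L2-estimate}) on the total space $f^{-1}(U)$ equipped with $K_{X/Y}\otimes L \otimes f^*(A_0)$ and the singular metric $h \otimes f^*h_{A_0}$ (twisted by $f^*h_{A_0}$ to supply the strict positivity needed for solvability), solving $\overline{\partial}\tilde u = \tilde g$ with the optimal constant governed by $B_{A_0, h_{A_0}} = [i\Theta_{A_0,h_{A_0}} \otimes \mathrm{id}, \Lambda_\omega]$. Because $h$ is only semi-positive, the curvature operator is $\geq 0$ only through the $f^*A_0$ contribution, which is exactly what the optimal-estimate formulation in Definition~\ref{def optimal L2 estimate on vector bdl} requires; here Lemma~\ref{DNWZ22, Lemma4.7} is used to see that the value $\langle B^{-1}_{A_0,h_{A_0}} g, g\rangle$ is independent of the choice of Kähler metric on $U$, so one may freely choose a convenient $\omega$ on $U$ and lift it (plus a fiber metric) to $X$. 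Pushing $\tilde u$ forward by $f$ and using Fubini to compare the $L^2$-norms on $X$ with the $L^2$-norms on $U$ against $H \otimes h_{A_0}$, one obtains a solution $u \in L^2_{m,0}(U, E \otimes A_0)$ of $\overline{\partial}u = g$ with the optimal bound, which is precisely the optimal $L^2$-estimate for $(E, H)$ on $U$; hence $H$ is full locally $L^2$-type Nakano semi-positive on $Y(\mathcal{E})$ as in Definition~\ref{Def loc L2 Nak semi posi on coh sheaf}.

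I expect the main obstacle to be the bookkeeping in the pushforward/pullback dictionary for $L^2$-norms: making precise that a $\overline{\partial}$-closed $E \otimes A_0$-valued $(m,1)$-form on $U$ lifts to a $\overline{\partial}$-closed, fiberwise-holomorphic $K_X \otimes L \otimes f^*A_0$-valued form on $f^{-1}(U)$ with matching norms, and conversely that the fiber-integration of the Hörmander solution lands back in the sheaf $\mathcal{E}$ rather than merely in $f_*(\omega_{X/Y}\otimes L)$. It is exactly here that the boundedness of $H$ on $U$ (guaranteed by $U \cap A = \emptyset$) is indispensable: without it, the fiberwise $L^2$ condition defining $\mathscr{I}(h|_{X_t})$ could fail on a positive-measure set of $t$, and the direct image of an $L^2$ form on $X$ would not in general be an $L^2$ section of $(E, H)$ with finite norm. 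A secondary technical point is verifying that $f^{-1}(U)$ (or a slight shrinking) carries a complete Kähler metric so that Theorem~\ref{Hormander L2-estimate} applies; this is standard since $f^{-1}(U) \setminus (\text{a hypersurface})$ is weakly pseudoconvex, or one can work on $X \setminus (Z_1 \cup Z_2)$ as in the projective setup already introduced, exhaust by sublevel sets of a strictly plurisubharmonic function, solve with uniform estimates, and pass to a weak limit. Finally, one records that since the estimate holds on a neighborhood of every point of $Y(\mathcal{E})$ and $Z$ may be taken to be $Y \setminus Y(\mathcal{E}) = Z_{\mathcal{E}}$, the metric $H$ is \emph{full} locally $L^2$-type Nakano semi-positive, which is the assertion of the theorem.
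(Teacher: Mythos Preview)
Your overall strategy --- lift $g$ to an $(n+m,1)$-form on the total space, solve H\"ormander there, and push forward --- is exactly the route the paper takes, and your remarks about Lemma~\ref{DNWZ22, Lemma4.7}, Fubini, and the approximation on $X\setminus(Z_1\cup Z_2)$ with weak limits are all correct ingredients. The genuine gap is in your very first step: you want to choose $U$ around $t_0\in Y(\mathcal{E})$ with $U\cap A=\emptyset$ and $U\cap Z=\emptyset$, but this is impossible whenever $t_0\in A$ or $t_0\in Z\setminus Z_{\mathcal{E}}$. Since ``full'' in Definition~\ref{Def loc L2 Nak semi posi on coh sheaf} requires the optimal estimate near \emph{every} point of $Y(\mathcal{E})$, your argument as written only yields the estimate on $Y(\mathcal{E})\setminus(Z\cup A)$, and the concluding sentence (``the estimate holds on a neighborhood of every point of $Y(\mathcal{E})$'') is unjustified.

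The paper's proof does \emph{not} avoid $A$. It works on a Stein $U\subset Y\setminus Z$ that may meet $A$, solves on $X_U^\nu$ with the smooth approximants $h_\nu$, passes to a weak limit $\widetilde v$, and then observes that the resulting $v$ satisfies $V(t)\in H^0(X_t,K_{X_t}\otimes L|_{X_t}\otimes\mathscr{I}(h|_{X_t}))\subseteq\mathcal{E}_t$ only for \emph{almost every} $t\in U$ (from finiteness of $\int_U\|v\|_H^2$). The hypothesis $\Sigma_H\subseteq A$ is used precisely at this point: one redefines $v=0$ on the measure-zero analytic set $A$, so that now $V(t)\in\mathcal{E}_t$ for all $t\in U$, and then invokes Lemma~\ref{Ext d-equation for hypersurface} to extend the equation $\overline\partial v=g$ (a priori valid only on $U\setminus A$) across $A$ to all of $U$. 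The same extension lemma, applied to the analytic set $B$ with $Z=Z_{\mathcal{E}}\cup B$, is what upgrades the conclusion from $Y\setminus Z$ to all of $Y(\mathcal{E})$. In short, the role of the assumption $\Sigma_H\subseteq A$ is not to let you dodge $A$ but to ensure the exceptional set is analytic (hence Lebesgue-null and removable for the $\overline\partial$-equation via Lemma~\ref{Ext d-equation for hypersurface}); without this step your pushforward argument cannot be completed near points of $A\cap Y(\mathcal{E})$.
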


\begin{proof}
    First, we prove that $H$ is locally $L^2$-type Nakano semi-positive, i.e. for any $t\in Y\setminus Z$, there exists a open neighborhood $U\subset Y\setminus Z$ of $t$ such that $H$ has the optimal $L^2$-estimate on $U$.
    We can take $U$ to Stein. Let $X_U=\pi^{-1}(U)$ and $X_U^\nu:=X_U\cap S_\nu$ then $X_U^\nu$ is also Stein by $f$ is holomorphic.
    Let $\overline{\partial}$-closed $g\in\mathscr{D}^{m,1}(U,E)$ and $\psi$ be any smooth strictly plurisubharmonic function on $U$.
    We can write $g(t)=\sum^m_{j=1} g_j(t)d\overline{t}_j\wedge dt$, with $g_j(t)\in E_t\subseteq H^0(X_t,\omega_{X_t}\otimes L|_{X_t})$.
    We can identify $g$ as a smooth compact supported $(n+m,1)$-form $\widetilde{g}(t,z):=\sum^m_{j=1}g_j(t,z)d\overline{t}_j\wedge dt$ on $X$, with $g_j(t,z)$ begin holomorphic section $\omega_{X_t}\otimes L|_{X_t}$.
    We have the following observations:
    \begin{itemize}
        \item $\overline{\partial}_zg_j(t,z)=0$ for any fiexd $t\in U$, since $g_j(t,z)$ are holomorphic sections of $\omega_{X_t}\otimes L|_{X_t}$,
        \item $\overline{\partial}_tg_j=0$, since $g$ is a $\overline{\partial}$-closed form on $U$.
    \end{itemize}

    We consider the integration 
    \begin{align*}
        &\int_{X_U^\nu}\langle[i\Theta_{L,h_\nu}+\idd f^*\psi\otimes\mathrm{id}_L,\Lambda_\omega]^{-1}\widetilde{g},\widetilde{g}\rangle_{h_\nu,\omega}e^{-f^*\psi}dV_\omega\\
        &=\int_{X_U^\nu}\langle[\idd\varphi_\nu+\idd f^*\psi,\Lambda_\omega]^{-1}\widetilde{g},\widetilde{g}\rangle_\omega e^{-\varphi_\nu-f^*\psi}dV_\omega.
    \end{align*}
    Note that, acting on $\Lambda^{n+m,1}T^*_X\otimes L$, we have 
    \begin{align*}
        [i\Theta_{L,h_\nu}+\idd f^*\psi,\Lambda_\omega]\geq[\idd f^*\psi,\Lambda_\omega]\geq0.  \tag*{($\ast$)}
    \end{align*}

    We take a local coordinate $(t_1,\ldots,t_m,z_1,\ldots,z_n)$ on $X$ near $t$, where $t_1,\ldots,t_m$ is the standard coordinate on $U\subset\mathbb{C}^m$. 
    Let $\omega'=i\sum^m_{j=1}dt_j\wedge d\overline{t}_j+i\sum^n_{j=1}dz_j\wedge d\overline{z}_j$ and $\omega_0=i\sum^m_{j=1}dt_j\wedge d\overline{t}_j$.
    Note that 
    \begin{align*}
        \idd f^*\psi=\sum^m_{j,k=1}\frac{\partial^2\psi}{\partial t_j\partial\overline{t}_k}dt_j\wedge d\overline{t}_k, 
    \end{align*}
    we have that 
    \begin{align*}
        [\idd f^*\psi,\Lambda_{\omega'}]\widetilde{g}&=\sum_{j,k}\frac{\partial^2\psi}{\partial t_j\partial\overline{t}_k}g_j(t,z)dt \wedge d\overline{t}_k,\\
        [\idd f^*\psi,\Lambda_{\omega'}]^{-1}\widetilde{g}&=\sum_{j,k}\psi^{jk}g_j(t,z)dt \wedge d\overline{t}_k,
    \end{align*}
    at $t$, where $(\psi^{jk})=(\frac{\partial^2\psi}{\partial t_j\partial\overline{t}_k})^{-1}$.
    By Lemma \ref{DNWZ22, Lemma4.7}, we have 
    \begin{align*}
        \langle[\idd f^*\psi,\Lambda_\omega]^{-1}\widetilde{g},\widetilde{g}\rangle_\omega dV_\omega
        &=\langle[\idd f^*\psi,\Lambda_{\omega'}]^{-1}\widetilde{g},\widetilde{g}\rangle_{\omega'} dV_{\omega'}\\
        &=\sum_{j,k}\psi^{jk}c_ng_j\wedge\overline{g}_kc_mdt\wedge d\overline{t}.
    \end{align*}

    By Fubini's theorem, we get that 
    \begin{align*}
        \int_{X_U^\nu}\langle[\idd f^*\psi,\Lambda_\omega]^{-1}\widetilde{g},\widetilde{g}\rangle_\omega e^{-\varphi_\nu-\pi^*\psi}dV_\omega
        &=\int_{X_U^\nu}\sum_{j,k}\psi^{jk}c_ng_j\wedge\overline{g}_ke^{-\varphi_\nu-f^*\psi}c_mdt\wedge d\overline{t}\\
        &=\int_U(g_j,g_k)_{H_\nu}(t)\psi^{jk}e^{-\psi}c_mdt\wedge d\overline{t}\\
        &=\int_U\langle[\idd\psi,\Lambda_{\omega_0}]^{-1}g,g\rangle_{H_\nu,\omega_0}e^{-\psi}dV_{\omega_0}\\
        &\leq\int_U\langle[\idd\psi,\Lambda_{\omega_0}]^{-1}g,g\rangle_{H,\omega_0}e^{-\psi}dV_{\omega_0}\\
        &<+\infty.
    \end{align*}

    From H\"ormander's $L^2$-estimate, i.e. Theorem \ref{Hormander L2-estimate}, there is solution $\widetilde{v}_\nu\in L^2_{n+m,0}(X_U^\nu,L,h_\nu,\omega)$ such that $\overline{\partial}\widetilde{v}_\nu=\widetilde{g}$ on $X_U^\nu$ and satisfies the following estimate 
    \begin{align*}
        \int_{X_U^\nu}|\widetilde{v}_\nu|^2_{h_\nu}e^{-f^*\psi}dV_\omega&=\int_{X_U^\nu}c_{n+m}\widetilde{v}_\nu\wedge\overline{\widetilde{v}}_\nu e^{-\varphi_\nu-f^*\psi}\\
        &\leq\int_{X_U^\nu}\langle[i\Theta_{L,h_\nu}+\idd f^*\psi\otimes\mathrm{id}_L,\Lambda_\omega]^{-1}\widetilde{g},\widetilde{g}\rangle_{h_\nu,\omega}e^{-f^*\psi}dV_\omega\\
        &\leq\int_U\langle[\idd\psi,\Lambda_{\omega_0}]^{-1}g,g\rangle_{H,\omega_0}e^{-\psi}dV_{\omega_0}<+\infty.
    \end{align*}

    We observe that $\overline{\partial}\widetilde{v}|_{S_t^\nu}=0$ for any fiexd $t\in U$, since $\overline{\partial}\widetilde{v}=\widetilde{g}$ on $X_U^\nu$ where $S_t^\nu:=X_t\cap S_\nu$.
    From the monotonicity to $\nu$ of $|\bullet|^2_{h_\nu}$ by increasing $(h_\nu)_{\nu\in\mathbb{N}}$, the family $(\widetilde{v}_\nu)_{\nu_1\leq\nu\in\mathbb{N}}$ forms a bounded sequence in $L^2_{n+m,0}(X_U^{\nu_1},L,h_{\nu_1},\omega)$.
    Therefore, we can obtain a weakly convergence subsequence in $L^2_{n+m,0}(X_U^{\nu_1},L,h_{\nu_1},\omega)$. 
    By using a diagonal argument, we get a subsequence $(\widetilde{v}_{\nu_k})_{k\in\mathbb{N}}$ of $(\widetilde{v}_\nu)_{\nu_1\leq\nu\in\mathbb{N}}$ converging weakly in $L^2_{n+m,0}(X_U^{\nu_1},L,h_{\nu_1},\omega)$ for any $\nu_1$,
    where $\widetilde{v}_{\nu_k}\in L^2_{n+m,0}(X_U^{\nu_k},L,h_{\nu_k},\omega)\subset L^2_{n+m,0}(X_U^{\nu_1},L,h_{\nu_1},\omega)$. 

    We denote by $\widetilde{v}$ the weakly limit of $(\widetilde{v}_{\nu_k})_{k\in\mathbb{N}}$. Then $\widetilde{v}$ satisfies $\overline{\partial}\widetilde{v}=\widetilde{g}$ on $X_U$ and 
    \begin{align*}
        \int_{X_U^{\nu_k}}|\widetilde{v}|^2_{h_{\nu_k}}e^{-f^*\psi}dV_\omega\leq\int_U\langle[\idd\psi,\Lambda_{\omega_0}]^{-1}g,g\rangle_{H,\omega_0}e^{-\varphi}dV_{\omega_0}<+\infty
    \end{align*}
    for any $k\in\mathbb{N}$. Taking weakly limit $k\to+\infty$ and using the monotone convergence theorem, we have the following estimate 
    \begin{align*}
        \int_{X_U}|\widetilde{v}|^2_he^{-f^*\psi}dV_\omega&=\int_{X_U\setminus (Z_1\cup Z_2)}|\widetilde{v}|^2_he^{-f^*\psi}dV_\omega\\
        &\leq\int_U\langle[\idd\psi,\Lambda_{\omega_0}]^{-1}g,g\rangle_{H,\omega_0}e^{-\psi}dV_{\omega_0}<+\infty,
    \end{align*}
    i.e. $\widetilde{v}\in L^2_{n+m,0}(X_U,L,h,\omega)$. 

    Here we write $\widetilde{v}(t,z)=\widetilde{V}(t,z)dz\wedge dt$, then 
    $\frac{\partial\widetilde{V}}{\partial\overline{z}_j}=0$, i.e. $\overline{\partial}\widetilde{v}|_{X_t}=0$ for any fiexd $t\in U$, since $\overline{\partial}\widetilde{v}=\widetilde{g}$ on $X_U$. 
    This means that $\widetilde{V}(t,\cdot)dz\in H^0(X_t,\omega_{X_t}\otimes L|_{X_t})$. 
    We can identify $\widetilde{v}$ as a $(m,0)$-form $v(t):=V(t)dt$ on $U$, with $V(t)=\widetilde{V}(t,\cdot)dz\in H^0(X_t,\omega_{X_t}\otimes L|_{X_t})$.

    From Fubini's theorem, we have that 
    \begin{align*}
        \int_{X_U}|\widetilde{v}|^2_he^{-f^*\psi}dV_\omega=\int_{X_U}c_{n+m}\widetilde{v}\wedge\overline{\widetilde{v}}he^{-f^*\psi}=\int_U||v||^2_He^{-\psi}dV_{\omega_0}.
    \end{align*}
    Therefore, we get 
    \begin{align*}
        \int_U||v||^2_{H,\omega_0}e^{-\psi}dV_{\omega_0}\leq\int_U\langle[\idd\psi,\Lambda_{\omega_0}]^{-1}g,g\rangle_{H,\omega_0}e^{-\psi}dV_{\omega_0}<+\infty.
    \end{align*}
    Here, by boundedness of the integral of $||v||^2_H$, for any almost everywhere $t\in U$ we have that $||v||^2_H(t)<+\infty$, i.e. $V(t)\in H^0(X_t,K_{X_t}\otimes L|_{X_t}\otimes\mathscr{I}(h|_{X_t}))\subseteq\mathcal{E}_t$.

    Form the assumption $\Sigma_H\subseteq A$, replacing $v=0$, i.e. $V=0$, on $A$ then for any $t\in U$ we get $V(t)\in H^0(X_t,K_{X_t}\otimes L|_{X_t})=\mathcal{E}_t$. 
    By the Lebesgue measure of $A$ is zero, this means that $v\in L^2_{m,0}(U,E,H,\omega_0)$ and $\overline{\partial}v=g$ on $U\setminus A$.
    From Lemma \ref{Ext d-equation for hypersurface}, we get $\overline{\partial}v=g$ on $U$.
    Hence, we showed that $H$ satisfies the optimal $L^2$-estimate on $U$.

    Finally, we prove that then $H$ is full locally $L^2$-type Nakano semi-positive on $Y(\mathcal{E})$.
    Put $Z_\mathcal{E}:=Y\setminus Y(\mathcal{E})$ then $Z_\mathcal{E}\subseteq Z$ and there is a analytic set $B$ such that $Z=Z_\mathcal{E}\cup B$.
    Therefore, it is sufficient to show that for any $t\in B\setminus Z_\mathcal{E}$, there exists a open neighborhood $U\subset Y(\mathcal{E})$ of $t$ such that $H$ has the optimal $L^2$-estimate on $U$.
    This can be shown in the same way as above by using Lemma \ref{Ext d-equation for hypersurface}.
\end{proof}

\begin{lemma}\label{Ext d-equation for hypersurface}$\mathrm{(cf.\,[Dem82,\,Lemma\,6.9],\,[Ber10,\,Lemma\,5.1.3])}$ 
    Let $\Omega$ be an open subset of $\mathbb{C}^n$ and $Z$ be a complex analytic subset of $\Omega$. Assume that $u$ is a $(p,q-1)$-form with $L^2_{loc}$ coefficients and $g$ is a $(p,q)$-form with $L^1_{loc}$ coefficients such that $\overline{\partial}u=g$ on $\Omega\setminus Z$ (in the sense of currents).
    Then $\overline{\partial}u=g$ on $\Omega$.
\end{lemma}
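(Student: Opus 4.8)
The lemma is a removable-singularity statement for the $\overline{\partial}$-equation, and the statement is local, so the plan is as follows. It suffices to verify the identity of currents $\overline{\partial}u=g$ by testing against an arbitrary smooth form $\alpha$ with compact support $K\Subset\Omega$, and by a partition of unity we may assume $K$ is contained in a coordinate ball $B$ on which $Z\cap B\subseteq\{f=0\}$ for some $f\in\mathcal{O}(B)$ with $f\not\equiv 0$ (this is possible because $Z$, which we take to be nowhere dense as the statement requires, has a nonzero defining function near each of its points). I would then build a family of logarithmic cutoffs $\chi_\epsilon\in C^\infty(B)$ with $0\le\chi_\epsilon\le 1$, with $\chi_\epsilon\equiv 0$ on a neighbourhood of $\{f=0\}$, with $\chi_\epsilon\to 1$ pointwise on $B\setminus\{f=0\}$, and — the essential property — with $\|\overline{\partial}\chi_\epsilon\|_{L^2(K)}\to 0$ as $\epsilon\to 0$. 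Concretely one sets $\chi_\epsilon:=\rho\bigl(\log|f|/|\log\epsilon|\bigr)$, where $\rho$ is a fixed smooth function equal to $0$ on $(-\infty,-2]$ and to $1$ on $[-1,\infty)$, together with $\chi_\epsilon:=0$ on $\{|f|\le\epsilon^2\}$, which makes it smooth across $\{f=0\}$.

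Granting such cutoffs, the rest is the standard argument. Since $\chi_\epsilon\alpha$ is smooth with compact support inside $\Omega\setminus Z$, the hypothesis gives $\int_\Omega u\wedge\overline{\partial}(\chi_\epsilon\alpha)=\pm\int_\Omega g\wedge\chi_\epsilon\alpha$. Writing $\overline{\partial}(\chi_\epsilon\alpha)=\overline{\partial}\chi_\epsilon\wedge\alpha+\chi_\epsilon\,\overline{\partial}\alpha$ and letting $\epsilon\to 0$: the terms $\int u\wedge\chi_\epsilon\,\overline{\partial}\alpha$ and $\int g\wedge\chi_\epsilon\alpha$ converge to $\int u\wedge\overline{\partial}\alpha$ and $\int g\wedge\alpha$ by dominated convergence, using $u\in L^2_{loc}\subseteq L^1_{loc}$, $g\in L^1_{loc}$ and $\chi_\epsilon\to 1$ a.e.; and the cross term is bounded by $\|\alpha\|_{\infty}\,\|u\|_{L^2(K)}\,\|\overline{\partial}\chi_\epsilon\|_{L^2(K)}\to 0$ by Cauchy--Schwarz. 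This yields $\int u\wedge\overline{\partial}\alpha=\pm\int g\wedge\alpha$ for every test form, i.e. $\overline{\partial}u=g$ on all of $\Omega$. The $L^2_{loc}$ hypothesis on $u$ is used precisely in this Cauchy--Schwarz step and cannot be weakened: $u=1/z$ on $\mathbb{C}$ is $L^1_{loc}$ and $\overline{\partial}$-closed away from $0$, yet $\overline{\partial}u=\pi\delta_0\ne 0$ as a current.

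The main obstacle is the construction of the cutoffs, that is, the estimate $\|\overline{\partial}\chi_\epsilon\|^2_{L^2(K)}\to 0$; this is exactly the assertion that a complex hypersurface has vanishing ``$L^2$-capacity''. With the above choice, $\overline{\partial}\chi_\epsilon=\rho'\bigl(\log|f|/|\log\epsilon|\bigr)\cdot|\log\epsilon|^{-1}\,\overline{\partial}\log|f|$ is supported in the shell $\{\epsilon^2\le|f|\le\epsilon\}$ and $\overline{\partial}\log|f|=\tfrac12\,\overline{\partial f}/\overline f$, so the point is the bound $\int_{\{\epsilon^2\le|f|\le\epsilon\}\cap K}|\partial f|^2/|f|^2\,dV\le C\,|\log\epsilon|$, after which $\|\overline{\partial}\chi_\epsilon\|^2_{L^2(K)}\le C'/|\log\epsilon|\to 0$. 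I would establish this bound either by resolving the singularities of $\{f=0\}$ so that $f$ becomes a normal-crossings monomial times a unit and evaluating the integral by Fubini in the blown-up coordinates, or directly by a dyadic decomposition of the shell $\{\epsilon^2\le|f|\le\epsilon\}$ combined with the elementary volume bounds coming from plurisubharmonicity of $\log|f|$. Everything else is routine; one may also note that when $Z$ has complex codimension $\ge 2$ it is enough to use cutoffs supported in an $\epsilon$-tube about $Z$ with gradient $\lesssim 1/\epsilon$, since $\int|\nabla\chi_\epsilon|^2\lesssim\epsilon^{-2}\operatorname{vol}(\epsilon\text{-tube})\to 0$, but the logarithmic cutoff treats all codimensions uniformly.
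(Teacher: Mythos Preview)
The paper does not supply its own proof of this lemma; it simply quotes it from [Dem82, Lemma~6.9] and [Ber10, Lemma~5.1.3]. Your argument via logarithmic cutoffs $\chi_\epsilon=\rho(\log|f|/|\log\epsilon|)$, the $L^2$-capacity estimate $\|\overline{\partial}\chi_\epsilon\|_{L^2(K)}\to 0$, and the Cauchy--Schwarz/dominated-convergence passage to the limit is precisely the classical proof given in those references, and it is correct.
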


\begin{lemma}\label{complete kah on X_j-Z}$\mathrm{(cf.\,[Dem82,\,Theorem\,1.5]})$ 
    Let $X$ be a \kah manifold and $Z$ be an analytic subset of $X$. Assume that $\Omega$ is a relatively open subset of $X$ possessing a complete \kah metric. Then $\Omega\setminus Z$ carries a complete \kah metric.
\end{lemma}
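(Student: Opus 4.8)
The plan is to reconstruct Demailly's construction: starting from the given complete \kah metric $\omega$ on $\Omega$, add a locally finite sum of terms $\idd$ of Poincar\'e-type weights attached to local equations of $Z$, so that the resulting form $\widetilde\omega$ stays \kah, still dominates $\tfrac12\omega$ (hence is complete ``toward $\partial\Omega$''), and in addition acquires infinite length toward $Z$. We may assume $Z\cap\Omega\neq\emptyset$ and work on $\Omega$ with $Z$ replaced by the analytic subset $Z\cap\Omega$.

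First I would fix the local model. By paracompactness, choose a locally finite family of relatively compact coordinate charts $U_\alpha\Subset\Omega$ covering a neighborhood of $Z$ such that $Z\cap U_\alpha=\{g^\alpha_1=\cdots=g^\alpha_{N_\alpha}=0\}$ for holomorphic $g^\alpha_j$ on $U_\alpha$, and, after shrinking, $\|G^\alpha\|^2:=\sum_j|g^\alpha_j|^2\le e^{-1}$ on $U_\alpha$. Put $v_\alpha:=-\log\|G^\alpha\|^2\ge 1$ and $\psi_\alpha:=-\log v_\alpha$ on $U_\alpha\setminus Z$; these are smooth there. Using that $\log\|G^\alpha\|^2$ is plurisubharmonic, the key identity
\[
\idd\psi_\alpha=-\frac{\idd v_\alpha}{v_\alpha}+\frac{i\partial v_\alpha\wedge\overline{\partial} v_\alpha}{v_\alpha^{2}}\ \geq\ \frac{i\partial v_\alpha\wedge\overline{\partial} v_\alpha}{v_\alpha^{2}}\ \geq\ 0
\]
holds on $U_\alpha\setminus Z$, the first inequality because $-\idd v_\alpha=\idd\log\|G^\alpha\|^2\ge0$ and $v_\alpha>0$. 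The rank-one term $v_\alpha^{-2}\,i\partial v_\alpha\wedge\overline{\partial} v_\alpha$ is the Poincar\'e piece that will force completeness toward $Z$.

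Next I would glue. Pick cutoffs $\chi_\alpha\in C^\infty(\Omega)$ with $\chi_\alpha\equiv1$ on $V_\alpha\Subset U_\alpha$, $\operatorname{supp}\chi_\alpha\Subset U_\alpha$, and $\bigcup_\alpha V_\alpha$ a neighborhood of $Z$. Then $\idd(\chi_\alpha\psi_\alpha)=\chi_\alpha\,\idd\psi_\alpha+R_\alpha$, where $R_\alpha=\psi_\alpha\,\idd\chi_\alpha+2\operatorname{Re}(i\partial\chi_\alpha\wedge\overline{\partial}\psi_\alpha)$ is a continuous form supported in the compact set $\operatorname{supp} d\chi_\alpha\subset\Omega\setminus Z$, hence $R_\alpha\ge-C_\alpha\omega$ there for some $C_\alpha>0$. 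Choosing $\varepsilon_\alpha>0$ with $\sum_\alpha\varepsilon_\alpha C_\alpha\le\tfrac12$ and setting $\widetilde\omega:=\omega+\sum_\alpha\varepsilon_\alpha\,\idd(\chi_\alpha\psi_\alpha)$ on $\Omega\setminus Z$ (a locally finite sum, each $\chi_\alpha\psi_\alpha$ extended by $0$), we get a closed real $(1,1)$-form with $\widetilde\omega\ge\omega-\tfrac12\omega=\tfrac12\omega>0$, i.e.\ a \kah metric on $\Omega\setminus Z$, satisfying moreover $\widetilde\omega\ge\omega+\varepsilon_\alpha\,v_\alpha^{-2}i\partial v_\alpha\wedge\overline{\partial} v_\alpha$ near $Z$ for the index $\alpha$ with the point in $V_\alpha$. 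For completeness I would use the criterion that a Riemannian manifold carrying a proper $\rho\ge0$ with $|d\rho|\le1$ is complete: take $\rho_\Omega$ a proper function on $\Omega$ with $|d\rho_\Omega|_\omega\le1$ (a smoothing of the $\omega$-distance to a base point; this exists since $\omega$ is complete), so $|d\rho_\Omega|_{\widetilde\omega}\le\sqrt2$; put $w_\alpha:=\log v_\alpha=-\psi_\alpha\ge0$, which tends to $+\infty$ at $Z$. From $\partial w_\alpha=v_\alpha^{-1}\partial v_\alpha$ and $\widetilde\omega\ge\varepsilon_\alpha v_\alpha^{-2}i\partial v_\alpha\wedge\overline{\partial} v_\alpha$ near $Z$ one gets $|dw_\alpha|^2_{\widetilde\omega}\le2/\varepsilon_\alpha$ near $Z$ (and it is bounded on the compact set $\operatorname{supp} d\chi_\alpha$ because $\widetilde\omega\ge\tfrac12\omega$). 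Then, with small $a,b_\alpha>0$ such that $a\sqrt2+\sum_\alpha b_\alpha\sup|d(\chi_\alpha w_\alpha)|_{\widetilde\omega}\le1$, the function $\rho:=a\rho_\Omega+\sum_\alpha b_\alpha\chi_\alpha w_\alpha$ has $|d\rho|_{\widetilde\omega}\le1$ and is proper on $\Omega\setminus Z$: $\{\rho\le c\}\subseteq\{\rho_\Omega\le c/a\}$ is relatively compact in $\Omega$, and since near $Z$ some $\chi_\alpha w_\alpha\to+\infty$, this set stays away from $Z$. Hence $\widetilde\omega$ is a complete \kah metric on $\Omega\setminus Z$.

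The main obstacle is the metric estimate near $Z$: showing that the Poincar\'e term $\varepsilon_\alpha v_\alpha^{-2}i\partial v_\alpha\wedge\overline{\partial} v_\alpha$ inside $\widetilde\omega$ actually controls $|dw_\alpha|_{\widetilde\omega}$ (the positivity $\idd\psi_\alpha\ge v_\alpha^{-2}i\partial v_\alpha\wedge\overline{\partial} v_\alpha$ is what makes it work, but one must track the cutoff cross terms and choose all the small constants summably). Relatedly, since $\Omega$ need not be relatively compact in $X$, every ``bounded on $\operatorname{supp} d\chi_\alpha$'' claim and the choice of $\varepsilon_\alpha,b_\alpha$ has to be organized carefully through the locally finite — possibly infinite — family of relatively compact charts.
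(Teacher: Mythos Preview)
The paper does not give its own proof of this lemma; it is simply quoted from \cite{Dem82} with a ``cf.'' reference. Your proposal correctly reconstructs Demailly's original construction (Poincar\'e-type weights $\psi_\alpha=-\log(-\log\|G^\alpha\|^2)$, cutoff gluing with summable constants, and the properness/Lipschitz criterion for completeness), and the argument is essentially sound as written.
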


By using Lemma \ref{complete kah on X_j-Z} and Demailly's approximation theorem (see \cite{Dem94}), the following can be shown similarly as above.
Here, we do not use Demailly's approximation theorem in the proof of Theorem \ref{H of full loc L2 Nak semi-posi in subsection} because the left term of $(\ast)$ is not necessarily semi-positive and H\"ormander's $L^2$-estimate cannot be used.

\begin{theorem}\label{H of full loc L2 Nak posi in subsection}
    Let $H$ be a canonical singular Hermitian metric on $\mathcal{E}=f_*(\omega_{X/Y}\otimes L\otimes\mathscr{I}(h))$ which induced by $h$.
    We assume that $X$ is compact \kah and $h$ is big.
    If there exists an analytic set $A$ such that $\Sigma_H\subseteq A$ then the $H$ is full locally $L^2$-type Nakano positive on $Y(\mathcal{E})$.
\end{theorem}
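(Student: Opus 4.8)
The plan is to mimic the proof of Theorem \ref{H of full loc L2 Nak semi-posi in subsection} almost verbatim, with the sole but essential modification that we now add a strictly plurisubharmonic weight coming from the bigness of $h$, which restores the strict positivity needed to invoke H\"ormander's estimate \emph{with a genuine curvature term} rather than merely with the auxiliary $\idd f^*\psi$. Concretely, since $h$ is big and $X$ is compact K\"ahler, by Demailly's regularization and the definition of bigness we may write (after shrinking and passing to an equivalent singular metric on a Zariski-open set) $i\Theta_{L,h}\geq\epsilon\,\omega_X$ in the sense of currents for some $\epsilon>0$ and some K\"ahler form $\omega_X$ on $X$; more precisely one first produces a sequence $\varphi_\nu\downarrow\varphi$ of smooth functions with $i\Theta_{L,h_\nu}=\idd\varphi_\nu\geq\epsilon\,\omega_X$ on the relevant Stein pieces, using Lemma \ref{complete kah on X_j-Z} to equip $X_U\setminus(Z_1\cup Z_2)$ (or the sublevel sets $X_U^\nu$) with a complete K\"ahler metric so that the $L^2$-machinery applies there.

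First I would fix $t\in Y(\mathcal{E})\setminus Z$ (the points of $B\setminus Z_{\mathcal{E}}$ being handled afterwards exactly as in the previous proof via Lemma \ref{Ext d-equation for hypersurface}), take a Stein neighborhood $U$, and \emph{choose} the strictly plurisubharmonic function $\psi$ on $U$ to be, say, $\psi(t)=|t|^2$ rescaled so that $\idd\psi$ is as small as we like; then on $\Lambda^{n+m,1}T^*_X\otimes L$ we have, in place of the merely-nonnegative ($\ast$),
\begin{align*}
    [i\Theta_{L,h_\nu}+\idd f^*\psi,\Lambda_\omega]\;\geq\;[i\Theta_{L,h_\nu},\Lambda_\omega]\;\geq\;\epsilon\,[\omega_X,\Lambda_\omega]\;>\;0,
\end{align*}
so that H\"ormander's theorem (Theorem \ref{Hormander L2-estimate}) gives, for $\overline{\partial}$-closed $\widetilde g$, a solution $\widetilde v_\nu$ of $\overline{\partial}\widetilde v_\nu=\widetilde g$ on $X_U^\nu$ with the optimal bound controlled by $\langle[i\Theta_{L,h_\nu}+\idd f^*\psi,\Lambda_\omega]^{-1}\widetilde g,\widetilde g\rangle$. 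As before I would then use Lemma \ref{DNWZ22, Lemma4.7} to replace the ambient metric $\omega$ by the product metric $\omega'=\omega_0+i\sum dz_j\wedge d\overline z_j$, so that the curvature operator restricted to forms of the shape $\widetilde g=\sum g_j\, d\overline t_j\wedge dz\wedge dt$ acts only in the $t$-directions; this identifies $[i\Theta_{L,h_\nu}+\idd f^*\psi,\Lambda_{\omega'}]^{-1}\widetilde g$ with $\sum (\Theta^\nu+\psi)^{jk} g_j\, dz\wedge dt\wedge d\overline t_k$ for the appropriate inverse Hermitian matrix, and after Fubini in the fiber variables this becomes precisely $\langle B_{h_\nu e^{-\psi}}^{-1} g, g\rangle$ on the base for $B=[\,(i\Theta_{L,h_\nu}\text{-fiber-average}+\idd\psi)\otimes\mathrm{id}_E,\Lambda_{\omega_0}]$, which is $\geq[\idd\psi\otimes\mathrm{id}_E,\Lambda_{\omega_0}]$, hence dominated by $\langle[\idd\psi,\Lambda_{\omega_0}]^{-1}g,g\rangle_{H_\nu}$, hence by $\langle[\idd\psi,\Lambda_{\omega_0}]^{-1}g,g\rangle_{H}$. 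Passing to the weak limit in $\nu$ along a diagonal subsequence, using monotone convergence for $|\widetilde v|^2_{h_\nu}\uparrow|\widetilde v|^2_h$ and that the right-hand side is $\nu$-independent, descending $\widetilde v$ to a section $v(t)=V(t)\,dt$ with $V(t)\in\mathcal{E}_t$ for a.e.\ $t$, and finally using $\Sigma_H\subseteq A$ together with Lemma \ref{Ext d-equation for hypersurface} to fix $v$ on the null set $A$, we conclude that the singular metric $He^{-\psi}$ on $E$ satisfies the optimal $L^2$-estimate on $U$. Since $\psi$ is a smooth strictly plurisubharmonic function on $U$, this is exactly the statement that $H$ is full locally $L^2$-type Nakano \emph{positive} on $Y(\mathcal{E})$.

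The main obstacle is the very first step: making rigorous sense of ``$i\Theta_{L,h}\geq\epsilon\,\omega_X$ everywhere it is used.'' A big singular metric need not have globally bounded-below curvature current, so one must combine the Kodaira–type decomposition $h = h_{\mathrm{amp}}\otimes h_{\mathrm{eff}}$ coming from bigness with Demailly's approximation theorem \cite{Dem94} to produce, on each Stein sublevel $X_U^\nu$, honestly smooth weights $\varphi_\nu$ with strictly positive Hessian and still decreasing to $\varphi$ a.e.; one then has to check that the resulting modified metrics are still increasing (or at least that the $L^2$-norms behave monotonically), that the complete K\"ahler metric furnished by Lemma \ref{complete kah on X_j-Z} on $X_U^\nu\setminus(\text{sing locus})$ is compatible with all the integrations, and that the fiberwise integrability defining $\mathcal{E}_t$ is preserved in the limit. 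Everything downstream — the Lemma \ref{DNWZ22, Lemma4.7} reduction, the Fubini computation, the diagonal weak-limit argument, and the extension-across-$A$ step — is identical to the proof of Theorem \ref{H of full loc L2 Nak semi-posi in subsection} and needs no new idea; this is why the excerpt remarks that in the semi-positive case Demailly's approximation is \emph{not} used, whereas here bigness is precisely what allows us to restore positivity of the curvature operator and therefore to upgrade ``semi-positive'' to ``positive.''
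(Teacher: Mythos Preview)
Your overall strategy is right and matches the paper's one-line indication: replace the Fornaess--Narasimhan approximation on a Stein complement (which needed projectivity) by Demailly's global regularization on the compact \kah $X$, use bigness to ensure the approximants carry strictly positive curvature, and invoke Lemma~\ref{complete kah on X_j-Z} to obtain complete \kah metrics on the relevant open pieces so that Theorem~\ref{Hormander L2-estimate} applies. Everything downstream (Lemma~\ref{DNWZ22, Lemma4.7}, Fubini, the diagonal weak-limit, Lemma~\ref{Ext d-equation for hypersurface}) is indeed unchanged.

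There is, however, a genuine confusion in your handling of the weights. The definition of locally $L^2$-type Nakano \emph{positive} asks for a \emph{fixed} strictly plurisubharmonic $\psi$ on $U$ such that $He^{\psi}$ (not $He^{-\psi}$, as you write) satisfies the optimal $L^2$-estimate, i.e.\ for \emph{every} auxiliary positive weight $e^{-\chi}$ one can solve $\overline{\partial}v=g$ with the bound governed by $B_\chi=[\idd\chi,\Lambda_{\omega_0}]$. In your write-up the slack function $\psi$ and the auxiliary weight are collapsed into one, and your displayed inequality $[i\Theta_{L,h_\nu}+\idd f^*\psi,\Lambda_\omega]\geq\cdots$ is the semi-positive computation verbatim. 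What is actually needed on $X_U$ is
\[
[\,i\Theta_{L,h_\nu}-\idd f^*\psi+\idd f^*\chi,\Lambda_\omega\,]\;\geq\;[\,\idd f^*\chi,\Lambda_\omega\,],
\]
which holds once $\psi$ is scaled so that $\idd f^*\psi$ is absorbed by the $\epsilon\,\omega_X$ furnished by bigness and the approximation. Your ``fiber-average of $i\Theta_{L,h_\nu}$'' step is not how the reduction works either: the curvature of $L$ has $dz$-components and does not factor through Fubini. One first discards the now-nonnegative term $i\Theta_{L,h_\nu}-\idd f^*\psi$ to reach $[\idd f^*\chi,\Lambda_\omega]$, and only then applies Lemma~\ref{DNWZ22, Lemma4.7} and Fubini exactly as before. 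Finally, in the compact \kah setting there are no hypersurface complements $S=X\setminus(Z_1\cup Z_2)$; this is precisely why Demailly's approximation and Lemma~\ref{complete kah on X_j-Z} \emph{replace} the Stein-sublevel argument rather than supplement it.
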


Here, the $L^2$-subsheaf $\mathscr{E}(H)$ of $H$ is a subsheaf of $E=\mathcal{E}|_{Y(\mathcal{E})}$ over $Y(\mathcal{E})$.
For a natural inclusion $j:Y\setminus Z_\mathcal{E}=Y(\mathcal{E})\hookrightarrow Y$, the natural extended $L^2$-subsheaf with respect to $H$ over $Y$ is defined by $\mathscr{E}_Y(H):=j_*\mathscr{E}(H)$ as in Definition \ref{def of ext L2 subsheaf}.

\begin{theorem}
    Let $f:X\to Y$ be a projective and surjective holomorphic mapping between two connected complex manifolds and $L$ be a holomorphic line bundle on $X$ equipped with a pseudo-effective metric $h$.
    Let $H$ be a canonical singular Hermitian metric on $f_*(\omega_{X/Y}\otimes L\otimes\mathscr{I}(h))$.
    If $X$ is projective and there exists an analytic set $A$ such that $\Sigma_H\subseteq A$ then the natural extended $L^2$-subsheaf $\mathscr{E}_Y(H)$ over $Y$ is coherent.
\end{theorem}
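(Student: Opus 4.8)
The plan is to deduce the coherence of $\mathscr{E}_Y(H)=j_*\mathscr{E}(H)$ from two inputs: the coherence of $\mathscr{E}(H)$ already over the locally free locus $Y(\mathcal{E})$, and a general extension principle for coherent subsheaves across the analytic set $Z_{\mathcal{E}}=Y\setminus Y(\mathcal{E})$, which has codimension $\geq 2$.

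First I would invoke Theorem \ref{H of full loc L2 Nak semi-posi in subsection}: under the present hypotheses ($X$ projective and $\Sigma_H\subseteq A$ for an analytic set $A$), the canonical singular Hermitian metric $H$ is full locally $L^2$-type Nakano semi-positive on $Y(\mathcal{E})$ in the sense of Definition \ref{Def loc L2 Nak semi posi on coh sheaf}; that is, every point of $Y(\mathcal{E})$ has a neighborhood on which $H$ satisfies the optimal $L^2$-estimate of Definition \ref{def optimal L2 estimate on vector bdl}. Since coherence of a sheaf is a local property and the coherence argument of Hosono and Inayama (\cite{HI20}, \cite{Ina22}) applies, near each point, to any singular Hermitian metric satisfying the optimal $L^2$-estimate there, it follows that the $L^2$-subsheaf $\mathscr{E}(H)\subseteq\mathscr{O}(E)=\mathcal{E}|_{Y(\mathcal{E})}$ is coherent on $Y(\mathcal{E})$.

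Second I would extend this across $Z_{\mathcal{E}}$. Coherence being local on $Y$, it suffices to treat a point $t_0\in Z_{\mathcal{E}}$ on a small ball $B\ni t_0$. On $B$ let $\mathcal{G}$ be the reflexive hull of $\mathcal{E}|_B$; it is coherent, reflexive, and agrees with $\mathcal{E}$ on $B\setminus Z_{\mathcal{E}}$ (where $\mathcal{E}$ is already locally free, hence reflexive). Since $Z_{\mathcal{E}}$ has codimension $\geq 2$ and a reflexive sheaf on a manifold is unchanged under direct image across an analytic set of codimension $\geq 2$, one gets $j_*(\mathcal{E}|_{B\setminus Z_{\mathcal{E}}})=\mathcal{G}$ on $B$. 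Consequently $j_*\mathscr{E}(H)$ is exactly the subsheaf of the coherent sheaf $\mathcal{G}$ consisting of local sections whose restriction to $B\setminus Z_{\mathcal{E}}$ lies in $\mathscr{E}(H)$, and the Siu--Trautmann extension theorem for coherent analytic subsheaves across an analytic subset of codimension $\geq 2$ shows that this subsheaf of $\mathcal{G}$ is itself coherent. Letting $t_0$ and $B$ vary yields the coherence of $\mathscr{E}_Y(H)$ over all of $Y$.

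I expect the main obstacle to be the second step, and more precisely the verification that $j_*\mathscr{E}(H)$ is genuinely the coherent extension and not something strictly larger, i.e. that no new local sections appear over $Z_{\mathcal{E}}$ beyond those of $\mathcal{G}$. This is what the codimension $\geq 2$ of $Z_{\mathcal{E}}$ — a structural property of the locally free locus of the torsion-free coherent sheaf $\mathcal{E}$ on the manifold $Y$ — together with the reflexivity of $\mathcal{G}$ guarantees, after which the subsheaf-extension theorem applies. By contrast, the first step is routine once Theorem \ref{H of full loc L2 Nak semi-posi in subsection} is in hand, since coherence of $L^2$-subsheaves is a purely local consequence of the optimal $L^2$-estimate.
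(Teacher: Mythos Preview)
Your proposal is correct and follows essentially the same approach as the paper: first obtain coherence of $\mathscr{E}(H)$ on $Y(\mathcal{E})$ via Theorem~\ref{H of full loc L2 Nak semi-posi in subsection} together with the Hosono--Inayama result [Ina22, Proposition~4.4], then extend across the codimension-$\geq 2$ set $Z_{\mathcal{E}}$. The paper phrases the second step as a one-line appeal to Riemann's extension theorem (using $j_*\mathcal{O}_{Y\setminus Z_{\mathcal{E}}}\cong\mathcal{O}_Y$), while you spell out the reflexive-hull/Siu--Trautmann argument more carefully, but the content is the same.
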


\begin{proof}
    From Theorem \ref{H of full loc L2 Nak semi-posi in subsection} and [Ina22,\,Proposition\,4.4], we have that the $L^2$-subsheaf $\mathscr{E}(H)$ over $Y(\mathcal{E})$ is coherent.
    For the natural inclusion $j:Y\setminus Z_\mathcal{E}=Y(\mathcal{E})\hookrightarrow Y$, we are already known $j_*\mathcal{O}_{Y\setminus Z_\mathcal{E}}\cong\mathcal{O}_Y$ since the analytic set $Z_\mathcal{E}:=Y\setminus Y(\mathcal{E})$ is codimension $\geq2$.
    By Riemann's extension theorem, the sheaf $j_*\mathscr{E}(H)=\mathscr{E}_Y(H)$ is also coherent.
\end{proof}

\begin{corollary}
    Let $H$ be a canonical singular Hermitian metric on $\mathcal{E}=f_*(\omega_{X/Y}\otimes L\otimes\mathscr{I}(h))$ which induced by a pseudo-effective metric $h$ on $L$.
    Let $B_H\subseteq Y(\mathcal{E})\setminus \Sigma_H$ be a open subset. Here, $\mathcal{E}|_{Y(\mathcal{E})}=E$ is holomorphic vector bundle.
    If $X$ is projective then for any local open subset $U\subset B_H$, $(E,H)$ satisfies the optimal $L^2$-estimate on $U$. And the $L^2$-subsheaf $\mathscr{E}_Y(H)$ is coherent on $B_H$.
\end{corollary}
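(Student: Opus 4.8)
The plan is to obtain the corollary by localizing the argument of Theorem \ref{H of full loc L2 Nak semi-posi in subsection} to the open set $B_H$, on which the auxiliary analytic set $A$ is no longer needed. The starting observation is that, by the very definition of $\Sigma_H$, for every $t\in B_H\subseteq Y(\mathcal{E})\setminus\Sigma_H$ one has
\begin{align*}
    \mathcal{E}_t=H^0(X_t,K_{X_t}\otimes L|_{X_t}),
\end{align*}
so the failure of $H$ to be bounded along the fibres simply does not occur over $B_H$. Accordingly I would not assume $\Sigma_H$ itself to be analytic; it is empty over $B_H$, which is exactly what lets us dispense with $A$.

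First I would fix an open $U\subset B_H$ and, after shrinking, assume $U$ Stein. Keeping $X$ projective, I would reuse the data from the proof of Theorem \ref{H of full loc L2 Nak semi-posi in subsection}: hypersurfaces $Z_1,Z_2\subset X$ with $X\setminus Z_1$ Stein and $S:=X\setminus(Z_1\cup Z_2)$ Stein with $L|_S$ trivial, a decreasing sequence of smooth plurisubharmonic functions $\varphi_\nu\downarrow\varphi:=-\log h|_S$, and the Stein sublevel exhaustion $S_\nu$ of $S$; set $X_U:=f^{-1}(U)$ and $X_U^\nu:=X_U\cap S_\nu$, which are Stein. Given a $\overline{\partial}$-closed $g\in\mathscr{D}^{m,1}(U,E)$ and a smooth strictly plurisubharmonic $\psi$ on $U$ (this captures an arbitrary positive line bundle over the Stein $U$), I would identify $g$ with a compactly supported, fibrewise-holomorphic $(n+m,1)$-form $\widetilde{g}$ on $X$, solve $\overline{\partial}\widetilde{v}_\nu=\widetilde{g}$ on $X_U^\nu$ by H\"ormander's estimate (Theorem \ref{Hormander L2-estimate}) for the metric $h_\nu e^{-f^*\psi}$ on $L$, using
\begin{align*}
    [i\Theta_{L,h_\nu}+\idd f^*\psi,\Lambda_\omega]\geq[\idd f^*\psi,\Lambda_\omega]\geq0
\end{align*}
on $\Lambda^{n+m,1}T^*_X\otimes L$ together with Lemma \ref{DNWZ22, Lemma4.7} to rewrite the curvature term fibrewise, and bound the right-hand side by Fubini's theorem through $\int_U\langle[\idd\psi,\Lambda_{\omega_0}]^{-1}g,g\rangle_{H,\omega_0}e^{-\psi}dV_{\omega_0}$. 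A diagonal and weak-limit procedure then yields $\widetilde{v}\in L^2_{n+m,0}(X_U,L,h,\omega)$ with $\overline{\partial}\widetilde{v}=\widetilde{g}$ on $X_U$, descending to $v=V(t)\,dt$ on $U$ with $V(t)\in H^0(X_t,K_{X_t}\otimes L|_{X_t}\otimes\mathscr{I}(h|_{X_t}))$ for almost all $t$.

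The one place where the argument becomes strictly easier than in Theorem \ref{H of full loc L2 Nak semi-posi in subsection} is the final identification: since $U\subset B_H$ and $B_H\cap\Sigma_H=\emptyset$, we have $H^0(X_t,K_{X_t}\otimes L|_{X_t}\otimes\mathscr{I}(h|_{X_t}))\subseteq\mathcal{E}_t=H^0(X_t,K_{X_t}\otimes L|_{X_t})$ for every $t\in U$, so $v$ already lies in $L^2_{m,0}(U,E,H,\omega_0)$ and $\overline{\partial}v=g$ holds on all of $U$ with no correction of $v$ on an exceptional set (if $U$ meets the locus of singular fibres or base-change failure one still invokes Lemma \ref{Ext d-equation for hypersurface} to cross it, exactly as in the last paragraph of that proof). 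Running Fubini once more gives the optimal $L^2$-estimate of Definition \ref{def optimal L2 estimate on vector bdl} for $(E,H)$ on $U$. Since $U$ can be chosen Stein around any point of $B_H$, this shows $H$ is (full) locally $L^2$-type Nakano semi-positive over $B_H$, whence $[$Ina22,\,Proposition\,4.4$]$ gives coherence of $\mathscr{E}(H)$ over $B_H$; and because $B_H\subseteq Y(\mathcal{E})$, the inclusion $j$ of Definition \ref{def of ext L2 subsheaf} is the identity near $B_H$, so $\mathscr{E}_Y(H)|_{B_H}=\mathscr{E}(H)|_{B_H}$ is coherent.

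I expect the main point to verify — rather than a genuine obstacle — to be the bookkeeping that the sole use of the global analytic set $A$ in the proof of Theorem \ref{H of full loc L2 Nak semi-posi in subsection} was to replace $v$ by $0$ on $A$ so as to force $V(t)\in\mathcal{E}_t$ everywhere, and that this step is rendered vacuous precisely by $B_H\cap\Sigma_H=\emptyset$. A secondary point: we only assume $X$ (not $f^{-1}(B_H)$) projective, so the Stein hypersurfaces $Z_1,Z_2$ are taken on all of $X$ and intersected with $X_U$, and wherever $B_H$ meets the non-submersion or non-base-change locus one uses Lemma \ref{Ext d-equation for hypersurface} exactly as before; everything else is a verbatim relativization of that proof.
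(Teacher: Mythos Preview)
Your proposal is correct and matches the paper's intended argument: the corollary is stated without proof there, as an immediate localization of the proof of Theorem \ref{H of full loc L2 Nak semi-posi in subsection} to $B_H$, where the condition $B_H\cap\Sigma_H=\emptyset$ makes the auxiliary analytic set $A$ unnecessary. Your identification of the sole role of $A$ (forcing $V(t)\in\mathcal{E}_t$ after setting $v=0$ on $A$) and why it is vacuous on $B_H$, together with the appeal to $[$Ina22,\,Proposition\,4.4$]$ for coherence and the observation that $\mathscr{E}_Y(H)|_{B_H}=\mathscr{E}(H)|_{B_H}$, is exactly what the paper leaves implicit.
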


\begin{remark}
    This theorem and corollary hold even if the situation is that $X$ is compact \kah and $h$ is big by Theorem \ref{H of full loc L2 Nak posi in subsection}.
\end{remark}

\begin{corollary}
    Let $\mathscr{F}$ be a torsion-free coherent sheaf on complex manifold $X$ equipped with a singular Hermitian metric $h$. 
    If $h$ is full locally $L^2$-type Nakano semi-positive on $X(\mathscr{F})$ then the natural extended $L^2$-subsheaf $\mathscr{E}_X(h)$ is coherent.
\end{corollary}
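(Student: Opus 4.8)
The plan is to run the concluding argument from the proof of the earlier coherence statement for $\mathscr{E}_Y(H)$, now with full local $L^2$-type Nakano semi-positivity taken as a hypothesis rather than derived. First I would unwind Definition \ref{Def loc L2 Nak semi posi on coh sheaf}: saying that $h$ is full locally $L^2$-type Nakano semi-positive on $X(\mathscr{F})$ means that in that definition one may take $Z=Z_{\mathscr{F}}=X\setminus X(\mathscr{F})$, so every $t\in X(\mathscr{F})$ has an open neighborhood $U\subseteq X(\mathscr{F})$ --- which we shrink so that it carries a \kah metric admitting a positive Hermitian holomorphic line bundle --- on which the holomorphic Hermitian vector bundle $(F,h)$ satisfies the optimal $L^2$-estimate of Definition \ref{def optimal L2 estimate on vector bdl}. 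By [Ina22,\,Proposition\,4.4], the very input used in the proof of Theorem \ref{H of full loc L2 Nak semi-posi in subsection}, the optimal $L^2$-estimate on $U$ forces $\mathscr{E}(h)|_U$ to be coherent; since coherence is a local property and these $U$ cover $X(\mathscr{F})$, the $L^2$-subsheaf $\mathscr{E}(h)$ is coherent on the complex manifold $X(\mathscr{F})$.

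Next I would extend across the bad locus. Since $\mathscr{F}$ is torsion-free, $Z_{\mathscr{F}}=X\setminus X(\mathscr{F})$ is a closed analytic subset of $X$ of codimension $\geq 2$, and for the inclusion $j:X(\mathscr{F})=X\setminus Z_{\mathscr{F}}\hookrightarrow X$ one has $j_*\mathcal{O}_{X\setminus Z_{\mathscr{F}}}\cong\mathcal{O}_X$. The coherent sheaf $\mathscr{E}(h)$ is a subsheaf of the locally free sheaf $F=\mathscr{F}|_{X(\mathscr{F})}$, whose pushforward $j_*F$ is again coherent (it is the reflexive hull of $\mathscr{F}$), so $\mathscr{E}_X(h):=j_*\mathscr{E}(h)$ is a subsheaf of this coherent sheaf. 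By Riemann's extension theorem --- i.e. the coherence of the pushforward of a coherent subsheaf of (the restriction of) a coherent sheaf across an analytic set of codimension $\geq 2$, exactly as in the proof of the $\mathscr{E}_Y(H)$ statement --- the sheaf $\mathscr{E}_X(h)$ is coherent on $X$.

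I expect no genuinely new obstacle here: the entire analytic weight sits in the first step, [Ina22,\,Proposition\,4.4], which bridges H\"ormander-type $L^2$-solvability (together with the strong openness and Noetherian mechanism) and coherence of the $L^2$-subsheaf, while the passage across $Z_{\mathscr{F}}$ is a routine codimension-$\geq 2$ Hartogs extension. In effect this corollary is the abstract form of the tail of the earlier coherence theorem, stated for an arbitrary torsion-free coherent sheaf equipped with a full locally $L^2$-type Nakano semi-positive metric.
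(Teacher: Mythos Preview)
Your proposal is correct and follows exactly the approach implicit in the paper: apply [Ina22,\,Proposition\,4.4] on the neighborhoods furnished by full local $L^2$-type Nakano semi-positivity to get coherence of $\mathscr{E}(h)$ on $X(\mathscr{F})$, then push forward across the codimension~$\geq 2$ set $Z_{\mathscr{F}}$ via $j_*\mathcal{O}_{X\setminus Z_{\mathscr{F}}}\cong\mathcal{O}_X$ and Riemann's extension theorem. The paper states this corollary without a separate proof precisely because it is the abstract form of the argument already given for $\mathscr{E}_Y(H)$.
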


\section{The minimal extension property and Nakano semi-positivity}

In this section, we study the relation between the minimal extension property and Nakano semi-positivity, and prove the following theorem.
For holomorphic line bundles, the two properties are equivalent from the optimal Ohsawa-Takegoshi $L^2$-extension theorem (see \cite{Blo13}, \cite{GZ12}) and the proof of [HPS18,\,Theorem\,21.1].
In the case of holomorphic vector bundles, the Ohsawa-Takegoshi $L^2$-extension theorem follows from Nakano semi-positivity, so it is likely to have the minimal extension property if it is Nakano semi-positive.
However, it turns out that in general the converse does not hold true. 
This phenomenon is first mentioned in \cite{HI20} for the positivity called \textit{weak Ohsawa}-\textit{Takegoshi} in a close concept instead of the minimal extension property.

\begin{theorem}\label{min ext prop and not Nak semi-posi}
    Let $\mathscr{F}$ be a torsion-free coherent sheaf on a complex manifold $X$. 
    Even if $\mathscr{F}$ has a singular Hermitian metric satisfying the minimal extension property, it does not necessarily have a singular Hermitian metric $h$ which is (globally) Nakano semi-positive and satisfying $\nu(-\log\mathrm{det}\,h,x)<2$ for any point $x\in X(\mathscr{F})$.
\end{theorem}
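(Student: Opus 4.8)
The plan is to exhibit a single explicit example that already appears in the introduction: the quotient bundle $Q:=(\mathbb{P}^n\times\mathbb{C}^{n+1})/\mathcal{O}_{\mathbb{P}^n}(-1)$ over $X=\mathbb{P}^n$, viewed as a (locally free, hence torsion-free coherent) sheaf $\mathscr{F}=\mathcal{O}(Q)$. On the one hand, $Q$ is globally generated: it is a quotient of the trivial bundle $\mathbb{P}^n\times\mathbb{C}^{n+1}$ by a subbundle, so the flat metric on the trivial bundle descends to a singular (in fact smooth) Hermitian metric on $Q$ which is Griffiths semi-positive and satisfies the minimal extension property — indeed, for any point $x$ and any unit vector $v\in Q_x$ one lifts $v$ to a constant section of $\mathbb{C}^{n+1}$ of norm $1$ and projects it back to a holomorphic section of $Q$ whose pointwise norm is $\leq 1$ everywhere, which gives the required extension on any ball $B\hookrightarrow\mathbb{P}^n$ with the $L^2$-average bound $\leq 1$. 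So the first half of the statement is immediate.

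The second half is the substance: I would show that $Q$ carries \emph{no} singular Hermitian metric $h$ that is simultaneously (globally) Nakano semi-positive and has $\nu(-\log\det h,x)<2$ at every point. The key point is the exact sequence $0\to\mathcal{O}_{\mathbb{P}^n}(-1)\to\mathcal{O}_{\mathbb{P}^n}^{\oplus(n+1)}\to Q\to 0$, which exhibits $Q$ as the quotient; dualizing gives $0\to Q^*\to\mathcal{O}^{\oplus(n+1)}\to\mathcal{O}(1)\to 0$, and one computes $\det Q=\mathcal{O}(1)$, so $Q\cong T_{\mathbb{P}^n}(-1)$ up to the Euler sequence twist — more precisely, this $Q$ is exactly the Euler-sequence quotient, so $\det Q\cong\mathcal{O}_{\mathbb{P}^n}(1)$. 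Now I would invoke the Lelong-number/singularity bound: if $h$ is Nakano semi-positive in the singular sense (Definition \ref{def Nakano semi-posi sing by Inayama}, or equivalently the local $L^2$-estimate condition) and $\nu(-\log\det h,x)<2$ for all $x$, then $\mathscr{E}(h)=\mathcal{O}(Q)$ (the multiplier/$L^2$-subsheaf is trivial because the determinant has small Lelong numbers, so $\mathscr{I}(\det h)=\mathcal{O}_X$, and the vector-bundle $L^2$-subsheaf inherits this), and moreover one gets a vanishing theorem: $H^q(\mathbb{P}^n, K_{\mathbb{P}^n}\otimes Q\otimes\mathscr{E}(h))=H^q(\mathbb{P}^n,\mathcal{O}(-n-1)\otimes Q)=0$ for $q\geq 1$. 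The contradiction then comes from computing this cohomology directly: twisting the Euler sequence by $\mathcal{O}(-n-1)$ and taking the long exact sequence, $H^{n-1}(\mathbb{P}^n,\mathcal{O}(-n-1)\otimes Q)$ is nonzero (it detects $H^n(\mathbb{P}^n,\mathcal{O}(-n-2))=H^0(\mathbb{P}^n,\mathcal{O}(1))^*\neq 0$, which does not cancel against $H^n(\mathbb{P}^n,\mathcal{O}(-n-1))^{\oplus(n+1)}=0$). This forces $q=n-1\geq 1$, contradicting the vanishing, so no such $h$ exists.

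The main obstacle is making the implication ``Nakano semi-positive in the singular sense $+$ small Lelong numbers $\Rightarrow$ vanishing of $H^q(X,K_X\otimes E\otimes\mathscr{E}(h))$'' precise and citing it correctly. This is the singular Nakano vanishing theorem (in the spirit of \cite{Ina22}): on a projective manifold, if $h$ is Griffiths semi-positive and globally Nakano semi-positive as in Definition \ref{def Nakano semi-posi sing by Inayama}, then the Nadel-type vanishing $H^q(X,K_X\otimes E\otimes\mathscr{E}(h))=0$ holds for $q\geq 1$; and the hypothesis $\nu(-\log\det h,x)<2$ ensures $\mathscr{E}(h)$ is the full sheaf $\mathcal{O}(E)$ (via the rank-one reduction $\det h$ and Skoda's integrability criterion, $\nu<2<2n$ is more than enough near each point for $n\geq 1$). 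I would spell out this step carefully, isolating it as a lemma, since everything else — the cohomology computation on $\mathbb{P}^n$ and the minimal-extension construction — is routine. A secondary point to check is that the quotient presentation really gives $\det Q\cong\mathcal{O}_{\mathbb{P}^n}(1)$ and that $Q$ has no Nakano semi-positive metric at all (not even smooth), which already follows since a smooth Nakano semi-positive metric would be a special case; the sharper statement with the Lelong bound rules out the singular ones too.
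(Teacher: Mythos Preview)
Your example and the minimal-extension half are fine; in fact your argument for the minimal extension property (lift $v$ to the constant section $g^*_xv$ and use that $g^*g$ is an orthogonal projection, so $|gs|_{h_Q}\le 1$ pointwise) is cleaner than the explicit computation the paper gives in Proposition~\ref{min ext prop on Q}.

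The second half, however, breaks in two linked places. First, the vanishing you invoke is false as stated: Nakano \emph{semi}-positivity alone does not force $H^q(X,K_X\otimes E)=0$ for $q\ge1$ (take $E$ trivial). The singular Nakano vanishing theorems of \cite{Ina22} and [Wat22b,\,Theorem 6.1] require a strictly positive twist by a line bundle. Second, and fatally, your target cohomology group actually vanishes: in the long exact sequence obtained from the Euler sequence tensored by $K_{\mathbb{P}^n}=\mathcal{O}(-n-1)$ you assert $H^n(\mathbb{P}^n,\mathcal{O}(-n-1))=0$, but $\mathcal{O}(-n-1)=K_{\mathbb{P}^n}$ and $H^n(\mathbb{P}^n,K_{\mathbb{P}^n})\cong\mathbb{C}$. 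The correct sequence reads
\[
0\to H^{n-1}(\mathbb{P}^n,K_{\mathbb{P}^n}\otimes Q)\to H^n(\mathbb{P}^n,\mathcal{O}(-n-2))\cong\mathbb{C}^{n+1}\xrightarrow{\ \sim\ }H^n(\mathbb{P}^n,\mathcal{O}(-n-1))^{\oplus(n+1)}\cong\mathbb{C}^{n+1},
\]
and the middle map is Serre-dual to the isomorphism $H^0(\mathcal{O})^{\oplus(n+1)}\to H^0(\mathcal{O}(1))$, so $H^{n-1}(\mathbb{P}^n,K_{\mathbb{P}^n}\otimes Q)=0$. Equivalently, $Q\cong T_{\mathbb{P}^n}(-1)$ and $H^{n-1}(K\otimes Q)\cong H^1(\Omega^1(1))=0$ by Bott. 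So no contradiction arises.

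The paper repairs both issues at once by inserting the positive twist $\mathcal{O}_{\mathbb{P}^n}(1)\cong\det Q$: the vanishing theorem [Wat22b,\,Theorem 6.1] then legitimately yields $H^q(\mathbb{P}^n,K_{\mathbb{P}^n}\otimes\mathcal{O}(1)\otimes\mathscr{E}(h))=0$, and since $\nu(-\log\det h,\cdot)<2$ gives $\mathscr{E}(h)=\mathcal{O}(Q)$, this becomes $H^q(\mathbb{P}^n,K_{\mathbb{P}^n}\otimes T_{\mathbb{P}^n})=0$, contradicting $H^{n-1}(\mathbb{P}^n,K_{\mathbb{P}^n}\otimes T_{\mathbb{P}^n})\cong H^{1,1}(\mathbb{P}^n,\mathbb{C})=\mathbb{C}$. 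You should restructure your argument along these lines.
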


Here, this symbol $\nu$ denotes the Lelong number and is defined by 
\begin{align*}
    \nu(\varphi,x):=\liminf_{z\to x}\frac{\varphi(z)}{\log|z-x|}
\end{align*}
for a plurisubharmonic function $\varphi$ and some coordinate $(z_1,\ldots,z_n)$ around $x$.
And it is already known that if $\nu(-\log\mathrm{det}\,h,x)<2$ then $\mathscr{E}(h)_x=\mathcal{O}(E)_x$.

\subsection{Exact sequences of torsion-free coherent sheaves and Positivity}

Consider the inheritance of positivity in exact sequences.
The following is already known for the minimal extension property.

\begin{proposition}\label{LS22 Prop 6 and 7}$\mathrm{(cf.\,[LS22,\,Proposition\,6\,and\,7]})$
    Let 
    \begin{align*}
        0\longrightarrow \mathscr{S} \stackrel{j}{\hookrightarrow} \mathscr{F} \stackrel{g}{\twoheadrightarrow} \mathscr{Q} \longrightarrow 0
    \end{align*}
    be an exact sequence of torsion-free coherent sheaves and $h$ be a singular Hermitian metric on $\mathscr{F}$ which has the minimal extension property. Then we have the following
    \begin{itemize}
        \item [$(a)$] If $j$ is generically an isomorphism, then $h$ extends to a singular Hermitian metric $h_{\mathscr{G}}$ on $\mathscr{G}$ satisfying the minimal extension property,
        \item [$(b)$] The induced metric $h_{\mathscr{Q}}$ has also the minimal extension property.
    \end{itemize}
\end{proposition}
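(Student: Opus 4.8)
The plan is to establish the two assertions separately, in each case first restricting to the Zariski-open locus where all three sheaves are holomorphic vector bundles and the maps are bundle maps, and then verifying the $L^2$-estimate in the minimal extension property by pushing a section forward through $g$ (for $(b)$) or pulling one back through $j$ (for $(a)$). For $(b)$ I would enlarge the analytic set $Z$ attached to $(\mathscr{F},h)$ to an analytic set $Z_{\mathscr{Q}}$ off which $\mathscr{S},\mathscr{F},\mathscr{Q}$ are all locally free and $0\to S\to F\to Q\to 0$ is exact as a sequence of vector bundles, and then equip $Q$ over $X\setminus Z_{\mathscr{Q}}$ with the quotient metric $h_{\mathscr{Q}}$, so that $|w|_{h_{\mathscr{Q}}}(x)=\inf\{\,|v|_h(x)\mid v\in F_x,\ g(v)=w\,\}$. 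Given an embedded ball $\iota\colon B\hookrightarrow X$ with $x=\iota(0)\notin Z_{\mathscr{Q}}$ and $w\in Q_x$ with $|w|_{h_{\mathscr{Q}}}(x)=1$, choose a lift $v\in F_x$ with $g(v)=w$ and $|v|_h(x)=1$ (a minimizing lift, or a weak $L^2$ limit of near-minimizing ones, which is harmless since point evaluation is continuous on $L^2$-bounded families of holomorphic sections). The minimal extension property of $(\mathscr{F},h)$ then gives $s\in H^0(B,\iota^*\mathscr{F})$ with $s(0)=v$ and $\frac{1}{\mathrm{Vol}(B)}\int_B|s|^2_h\,dV_B\le 1$. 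Since $\iota$ is a closed embedding, $\iota^*g$ stays surjective on sheaves, so $t:=\iota^*g\circ s\in H^0(B,\iota^*\mathscr{Q})$ is defined, $t(0)=g(v)=w$, and $|t|_{h_{\mathscr{Q}}}\le|s|_h$ pointwise because $s$ is one lift of $t$; hence $\frac{1}{\mathrm{Vol}(B)}\int_B|t|^2_{h_{\mathscr{Q}}}\,dV_B\le 1$, which is exactly the minimal extension property for $h_{\mathscr{Q}}$ with the set $Z_{\mathscr{Q}}$. This part is essentially immediate.

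For $(a)$, using that $j$ is generically an isomorphism, I would let $D\subsetneq X$ be the proper analytic locus over which $j$ fails to be an isomorphism of sheaves and put $Z_{\mathscr{S}}:=Z\cup D$; over $X\setminus Z_{\mathscr{S}}$ the map $j$ identifies $\mathscr{S}$ with $\mathscr{F}$, and I set $h_{\mathscr{S}}:=j^*h$ there, which is the asserted extension of $h$. To verify the minimal extension property with $Z_{\mathscr{S}}$, fix $\iota\colon B\hookrightarrow X$ with $x=\iota(0)\notin Z_{\mathscr{S}}$ and $v\in S_x$ with $|v|_{h_{\mathscr{S}}}(x)=1$; view $v\in F_x$ via $j$ and use the minimal extension property of $(\mathscr{F},h)$ to obtain $s\in H^0(B,\iota^*\mathscr{F})$ with $s(0)=v$ and $\frac{1}{\mathrm{Vol}(B)}\int_B|s|^2_h\,dV_B\le 1$. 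The task is then to replace $s$ by a section of $\iota^*\mathscr{S}$ of no larger norm: over $B\setminus\iota^{-1}(D)$ the sheaves $\iota^*\mathscr{S}$ and $\iota^*\mathscr{F}$ are canonically identified, so there $s$ is already a holomorphic section of $\iota^*\mathscr{S}$ with finite $h_{\mathscr{S}}$-norm, and embedding $\mathscr{S}$ locally into a free sheaf (possible since $\mathscr{S}$ is torsion-free) and applying Lemma~\ref{Ext d-equation for hypersurface} with right-hand side $0$ extends it holomorphically across $\iota^{-1}(D)$ to some $\widetilde{s}\in H^0(B,\iota^*\mathscr{S})$ with, by continuity, $\widetilde{s}(0)=v$ and $\frac{1}{\mathrm{Vol}(B)}\int_B|\widetilde{s}|^2_{h_{\mathscr{S}}}\,dV_B\le 1$, proving the minimal extension property for $h_{\mathscr{S}}$.

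The hard part will be precisely this extension step in $(a)$: Lemma~\ref{Ext d-equation for hypersurface} needs the components of $s$ to be locally square-integrable up to $\iota^{-1}(D)$, whereas a priori the bound $\int_B|s|^2_h\le\mathrm{Vol}(B)$ controls $s$ only on the region where $h$ stays bounded below. I would handle this by using that the metrics of interest — in particular the canonical metrics of Theorem~\ref{HPS18, Theorem21.1} — are locally bounded above and below off a proper analytic subset, which can be absorbed into $Z_{\mathscr{S}}$; then the $L^2(h)$-bound becomes an honest $L^2_{loc}$-bound for the Euclidean components of $s$ near $\iota^{-1}(D)$ and Lemma~\ref{Ext d-equation for hypersurface} applies as stated. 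The remaining ingredients — enlarging analytic sets, the fibrewise finite-dimensional minimization in $(b)$, and the compatibility of pull-back along $\iota$ with $j$ and $g$ — are routine.
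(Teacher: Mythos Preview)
The paper does not prove this proposition: it is quoted from [LS22] and used as a black box (only part $(b)$ is actually invoked, in the proof of Theorem~\ref{min ext prop and not Nak semi-posi}). So there is no in-paper argument to compare against.

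Your proof of $(b)$ is correct and is the standard one. The only delicate point is whether a lift $v\in F_x$ with $g(v)=w$ and $|v|_h(x)=1$ exists when the fibre form $h(x)$ is merely non-negative; your parenthetical fix (take near-minimizing lifts, produce the corresponding extensions, and pass to a weak-$L^2$/normal-families limit) handles this.

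For $(a)$ the statement as printed is garbled: the sheaf $\mathscr{G}$ does not occur in the exact sequence, and if $j:\mathscr{S}\hookrightarrow\mathscr{F}$ were generically an isomorphism in a short exact sequence of \emph{torsion-free} sheaves, then $\mathscr{Q}$ would be torsion, hence zero, and $j$ an isomorphism. The source statement [LS22, Proposition~6] (parallel to [HPS18, Proposition~19.3] quoted just below) concerns a morphism $\phi:\mathscr{F}\to\mathscr{G}$ of torsion-free coherent sheaves that is generically an isomorphism: one transports $h$ to $\mathscr{G}$ via $\phi$ on the open locus where $\phi$ is an isomorphism. With that reading the proof is as easy as $(b)$: given $x$ outside the enlarged bad set and $w\in G_x$ of norm $1$, set $v=\phi_x^{-1}(w)\in F_x$, extend to $s\in H^0(B,\iota^*\mathscr{F})$ by the minimal extension property of $h$, and put $t:=(\iota^*\phi)(s)\in H^0(B,\iota^*\mathscr{G})$. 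Since $\phi$ is a morphism of sheaves on all of $X$, $t$ is defined on the whole ball, $t(0)=w$, and $|t|_{h_{\mathscr{G}}}=|s|_h$ off a set of measure zero, so the $L^2$-bound carries over directly.

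Your reading --- pulling $s$ back into the \emph{sub}sheaf $\mathscr{S}$ --- manufactures an obstruction that is not really there, and the workaround you propose does have a gap: invoking Lemma~\ref{Ext d-equation for hypersurface} needs Euclidean $L^2_{loc}$-bounds on the components of $s$, which $\int_B|s|^2_h\le\mathrm{Vol}(B)$ does not give unless $h$ is locally bounded below, and the locus where that fails is not assumed analytic. Restricting to ``metrics of interest'' as you suggest is a scope reduction, not a proof of the proposition as stated. With the intended direction of the map this difficulty simply does not arise.
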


For Griffiths and Nakano positivity of smooth metrics, the following is known.

\begin{proposition}\label{Dem-book Prop6.10}$\mathrm{(cf.\,[Dem}$-$\mathrm{book,\,ChapterVII,\,Proposition\,6.10]})$
    Let $0\to S\to E\to Q\to 0$ be an exact sequence of hermitian vector bundles. Then we have the following 
    
    \!\!\!\!\!\!\!$(a) \,\,E\geq_{Grif}0 \Longrightarrow Q\geq_{Grif}0$, $(b) \,\,E\leq_{Grif}0 \Longrightarrow S\leq_{Grif}0$, $(c) \,\,E\leq_{Nak}0 \Longrightarrow S\leq_{Nak}0$,
    and analogous implications hold true for strictly positivity. 

    In particular, a Nakano semi-positive metric of $E$ does not necessarily induce a Nakano semi-positive metric of $Q$.
\end{proposition}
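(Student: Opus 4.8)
The plan is to reduce every statement to the classical Gauss--Codazzi curvature identities for a Hermitian sub/quotient pair. Give $S$ the metric $h_S$ restricted from $h$, and identify $Q$ with the $C^{\infty}$ orthogonal complement $S^{\perp}\subseteq E$, so that the quotient metric $h_Q$ is $h|_{S^{\perp}}$. Let $\beta\in C^{\infty}(X,\Lambda^{1,0}T^{*}_X\otimes\mathrm{Hom}(S,Q))$ be the second fundamental form of $S$ in $(E,h)$, so that $D'^{h}s=D'^{h_S}s+\beta(s)$ for local sections $s$ of $S$; write $\beta=\sum_{j}\beta_j\,dz_j$ with $\beta_j\in\mathrm{Hom}(S,Q)$ and pointwise adjoints $\beta_j^{*}\in\mathrm{Hom}(Q,S)$. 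The standard computation of Chern curvatures then yields the pointwise identities
\begin{align*}
    \Theta^{h_S}_{jk}=\Theta^{h}_{jk}|_{S}-\beta_k^{*}\beta_j,\qquad\qquad \Theta^{h_Q}_{jk}=\Theta^{h}_{jk}|_{Q}+\beta_j\beta_k^{*},
\end{align*}
where $\Theta^{h}_{jk}|_{S}$ and $\Theta^{h}_{jk}|_{Q}$ denote the compressions of $\Theta^{h}_{jk}$ to the orthogonal summands $S$ and $S^{\perp}$. Deriving these identities with the correct signs is the one point that needs a little care; everything after is formal.

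Granting them, $(a)$ and $(b)$ are the single-section specializations. For $v\in Q_x$ and $\xi\in\mathbb{C}^{n}$,
\begin{align*}
    \sum_{j,k}(\Theta^{h_Q}_{jk}v,v)_{h_Q}\xi_j\overline{\xi}_k=\sum_{j,k}(\Theta^{h}_{jk}v,v)_{h}\xi_j\overline{\xi}_k+\left|\sum_{j}\xi_j\beta_j v\right|^{2}_{h_Q}\geq\sum_{j,k}(\Theta^{h}_{jk}v,v)_{h}\xi_j\overline{\xi}_k,
\end{align*}
so $h$ Griffiths semi-positive forces $h_Q$ Griffiths semi-positive, and $h$ strictly Griffiths positive forces the same strictly (the first summand on the right is already $>0$ for $v,\xi\neq0$); similarly, using the first identity and $s\in S_x$,
\begin{align*}
    \sum_{j,k}(\Theta^{h_S}_{jk}s,s)_{h_S}\xi_j\overline{\xi}_k=\sum_{j,k}(\Theta^{h}_{jk}s,s)_{h}\xi_j\overline{\xi}_k-\left|\sum_{j}\xi_j\beta_j s\right|^{2}_{h_Q}\leq\sum_{j,k}(\Theta^{h}_{jk}s,s)_{h}\xi_j\overline{\xi}_k,
\end{align*}
which gives $(b)$ together with its strict form.

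For $(c)$, let $(s_1,\dots,s_n)$ be an $n$-tuple in $S_x$. Since $(\beta_k^{*}\beta_j s_j,s_k)_{h_S}=(\beta_j s_j,\beta_k s_k)_{h_Q}$, the first identity gives
\begin{align*}
    \sum_{j,k}(\Theta^{h_S}_{jk}s_j,s_k)_{h_S}=\sum_{j,k}(\Theta^{h}_{jk}s_j,s_k)_{h}-\left\|\sum_{j}\beta_j s_j\right\|^{2}_{h_Q}\leq\sum_{j,k}(\Theta^{h}_{jk}s_j,s_k)_{h}.
\end{align*}
As the $s_j$ are in particular vectors in $E_x$, $h$ Nakano semi-negative makes the right-hand side $\leq0$, hence $h_S$ is Nakano semi-negative; and if $h$ is strictly Nakano negative, the right-hand side is $<0$ for a nonzero tuple, so the same holds strictly for $h_S$. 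This proves $(c)$.

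The last assertion records that the ``quotient analogue'' of $(c)$ fails: running the same computation with the second identity, for an $n$-tuple $(v_1,\dots,v_n)$ in $Q_x$ the correction term is
\begin{align*}
    \sum_{j,k}(\beta_j\beta_k^{*}v_j,v_k)_{h_Q}=\sum_{j,k}(\beta_k^{*}v_j,\beta_j^{*}v_k)_{h_S}=\sum_{j,k}(w_{jk},w_{kj})_{h_S},\qquad w_{jk}:=\beta_k^{*}v_j\in S_x,
\end{align*}
i.e.\ the pairing of the matrix $(w_{jk})$ against its transpose, which is not of definite sign in general. A concrete witness is the trivial bundle $\mathbb{P}^{n}\times\mathbb{C}^{n+1}$ with the flat metric (zero Chern curvature, hence Nakano semi-positive) together with the exact sequence $0\to\mathcal{O}_{\mathbb{P}^{n}}(-1)\to\mathbb{P}^{n}\times\mathbb{C}^{n+1}\to Q\to0$: as shown later in this section, the quotient $Q=(\mathbb{P}^{n}\times\mathbb{C}^{n+1})/\mathcal{O}_{\mathbb{P}^{n}}(-1)$ carries no Nakano semi-positive metric at all, so in particular the induced quotient metric is not Nakano semi-positive. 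The only genuinely nontrivial ingredient is that last example; the rest is the curvature bookkeeping above.
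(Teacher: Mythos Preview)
The paper does not actually supply a proof of this proposition: it is quoted verbatim from Demailly's book and used as a black box, with the final clause justified only later by the explicit example in Proposition~\ref{Q has no positivity}. Your argument is precisely the standard Gauss--Codazzi computation that Demailly gives, so there is nothing to compare strategy-wise; your write-up would serve perfectly well as the missing proof.

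One small slip to fix in part $(a)$: since $v\in Q_x$ and $\beta_j\colon S\to Q$, the expression $\beta_j v$ is not defined. The correction term coming from $\Theta^{h_Q}_{jk}=\Theta^{h}_{jk}|_Q+\beta_j\beta_k^{*}$ is
\[
\sum_{j,k}(\beta_j\beta_k^{*}v,v)_{h_Q}\,\xi_j\overline{\xi}_k
=\sum_{j,k}(\beta_k^{*}v,\beta_j^{*}v)_{h_S}\,\xi_j\overline{\xi}_k
=\Bigl|\sum_{j}\overline{\xi}_j\,\beta_j^{*}v\Bigr|^{2}_{h_S}\ \geq 0,
\]
so the conclusion is unchanged. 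Your treatment of $(b)$ and $(c)$ is clean and correct, and the counterexample you invoke for the last sentence is exactly the one the paper develops in \S6.2.
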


Here, for the inheritance of semi-positivity from $E$ to $Q$, Nakano semi-positivity has a counterexample (see Proposition \ref{Q has no positivity}), but by rephrasing condition $(c)$, we find the following with respect to dual Nakano positivity.

\begin{corollary}\label{exact sequence and smooth dual Nakano semi posi}
    Let $g:E\twoheadrightarrow Q$ be a quotient onto a holomorphic vector bundle. Then if $E$ is dual Nakano (semi)-positive then $Q$ is also dual Nakano (semi)-positive.
\end{corollary}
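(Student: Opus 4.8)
The plan is to dualize the sequence and apply Proposition \ref{Dem-book Prop6.10}(c). Since $Q$ is a holomorphic vector bundle, the surjection $g$ has locally constant rank, hence $S:=\ker g$ is a holomorphic subbundle of $E$ and we obtain a short exact sequence of holomorphic vector bundles $0\to S\to E\stackrel{g}{\to}Q\to 0$. I would equip $Q$ with the quotient metric induced from the Hermitian metric $h$ on $E$. Dualizing produces $0\to Q^*\stackrel{g^*}{\to}E^*\to S^*\to 0$, which exhibits $Q^*$ as a holomorphic subbundle of $E^*$; the fact to record carefully here is the standard compatibility of duality with the sub/quotient constructions, namely that restricting the dual metric $h^*$ to $Q^*\subseteq E^*$ gives precisely the dual of the quotient metric on $Q$.

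By definition, dual Nakano (semi)-positivity of $E$ means exactly that $(E^*,h^*)$ is Nakano (semi)-negative. Applying Proposition \ref{Dem-book Prop6.10}(c) to the exact sequence $0\to Q^*\to E^*\to S^*\to 0$ then shows that the subbundle $(Q^*,h^*|_{Q^*})$ is Nakano (semi)-negative. Since, by the previous paragraph, $(Q^*,h^*|_{Q^*})$ is the dual of $Q$ endowed with its quotient metric, this says precisely that the quotient metric on $Q$ is dual Nakano (semi)-positive, which is the assertion. The strict case follows verbatim from the strict version of Proposition \ref{Dem-book Prop6.10}(c).

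The one step with genuine content — and the point I would be most careful to state precisely — is the compatibility between duality and the sub/quotient operations on Hermitian bundles: dualizing a metric quotient bundle yields a metric subbundle carrying the restricted dual metric. Everything else is a formal consequence of the definition of dual Nakano positivity together with the already-cited Demailly proposition, so I do not anticipate any further obstacle.
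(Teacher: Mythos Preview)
Your proof is correct and follows essentially the same route as the paper: dualize the short exact sequence to exhibit $Q^*$ as a subbundle of $E^*$, then apply Proposition~\ref{Dem-book Prop6.10}(c). Your version is more explicit about the kernel being a subbundle and about the metric compatibility under duality, but the underlying argument is identical.
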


\begin{proof}
    There exists a holomorphic vector bundle $S$ such that $0\to S\to E\to Q\to 0$ is an exact sequence of holomorphic vector bundles. Then the sequence $0\to Q^*\to E^*\to S^*\to 0$ is also exact.
    Here, $E^*$ is Nakano (semi)-negative by the assumption. By $(c)$ of Proposition \ref{Dem-book Prop6.10}, $Q^*$ is Nakano (semi)-negative.
\end{proof}

We consider the positivity of singular Hermitian metrics. 
For Griffiths positivity, [HPS18,\,Proposition\,19.3] is already known, and we obtain the following proposition for (dual) Nakano positivity.

\begin{proposition}$\mathrm{(cf.\,[HPS18,\,Proposition\,19.3]})$
    Let $\phi:\mathscr{F}\to\mathscr{G}$ be a morphism between two torsion-free coherent sheaves that is generically an isomorphism. 
    If $\mathscr{F}$ has a singular Griffiths semi-positive Hermitian metric, then so does $\mathscr{G}$.
\end{proposition}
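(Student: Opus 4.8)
The plan is to push the given metric forward along $\phi$ over the open locus where both sheaves are honest vector bundles and $\phi$ is an isomorphism, and then to extend it across the thin complementary analytic set by a removable-singularity argument. Since $\mathscr{F}$ is torsion-free and $\phi$ is generically an isomorphism, $\ker\phi$ is a torsion subsheaf of $\mathscr{F}$ and therefore vanishes; so $\phi$ is injective and we may view $\mathscr{F}$ as a coherent subsheaf of $\mathscr{G}$ that coincides with $\mathscr{G}$ outside a nowhere dense closed analytic set $Z_0\subseteq X$. I would set $Y_0:=X(\mathscr{G})$ and $A:=Z_0\cup(X\setminus X(\mathscr{F}))$. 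The trace $A\cap Y_0$ is a nowhere dense closed analytic subset of $Y_0$, since $Z_0\cap Y_0$ is analytic and $(X\setminus X(\mathscr{F}))\cap Y_0$ is analytic of codimension $\geq2$; on $Y_0\setminus A$ the sheaves $\mathscr{F}$ and $\mathscr{G}$ are one and the same holomorphic vector bundle $E$, which carries the given singular Hermitian metric $h$, and I put $h_{\mathscr{G}}:=h$ there.

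Next I would dualize. The inclusion $\mathscr{F}\hookrightarrow\mathscr{G}$ induces an injection $\phi^{\vee}:\mathscr{G}^{\vee}\hookrightarrow\mathscr{F}^{\vee}$ of reflexive sheaves which is an isomorphism over $X\setminus Z_0$; as $X\setminus X(\mathscr{F})$ has codimension $\geq2$, a local holomorphic section of $\mathscr{F}^{\vee}$ over an open set $U$ is the same datum as a holomorphic section of the bundle $F^{*}=\mathscr{F}^{\vee}|_{X(\mathscr{F})}$ over $U\cap X(\mathscr{F})$. By Definition \ref{def Griffiths semi-posi sing}, Griffiths semi-positivity of $h$ means Griffiths semi-negativity of the dual metric $h^{*}$, i.e.\ $|\sigma|_{h^{*}}$ is plurisubharmonic on $X(\mathscr{F})$ for every local holomorphic section $\sigma$ of $\mathscr{F}^{\vee}$. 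Hence, for every local holomorphic section $\xi$ of $\mathscr{G}^{\vee}$ over $U\subseteq Y_0$, the section $\phi^{\vee}\xi$ of $\mathscr{F}^{\vee}$ produces a plurisubharmonic function $|\phi^{\vee}\xi|_{h^{*}}$ on $U\cap X(\mathscr{F})$ which agrees on $U\setminus A$ with the norm $|\xi|_{h_{\mathscr{G}}^{*}}$ of the transported metric.

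It then remains to extend across $A\cap Y_0$. Over $U\cap X(\mathscr{F})\cap Y_0$ the function $|\phi^{\vee}\xi|_{h^{*}}$ is plurisubharmonic, hence locally bounded above near $Z_0$, so by the standard removable-singularity theorem for plurisubharmonic functions across analytic sets it extends plurisubharmonically across $Z_0\cap X(\mathscr{F})\cap Y_0$; it then extends further across $(X\setminus X(\mathscr{F}))\cap Y_0$, which has codimension $\geq2$, with no growth condition required. Carrying this out simultaneously for all local sections $\xi$ yields a Griffiths semi-negative singular Hermitian metric $h_{\mathscr{G}}^{*}$ on $\mathscr{G}^{\vee}|_{Y_0}$ extending the transported $h^{*}$ (it is a bona fide singular Hermitian metric because it agrees with the transported $h^{*}$ off the measure-zero set $A\cap Y_0$), and its dual $h_{\mathscr{G}}$ is the desired Griffiths semi-positive singular Hermitian metric on $\mathscr{G}$. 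The same scheme applies verbatim to the dual Nakano case: one transports $h^{*}$ exactly as above and replaces ``$|\sigma|_{h^{*}}$ plurisubharmonic'' by ``$T^{h^{*}}_{\sigma}$ plurisubharmonic for every $n$-tuple $\sigma$ of holomorphic sections of $\mathscr{F}^{\vee}$'' in the sense of Definitions \ref{def Nakano semi-negative as Raufi} and \ref{def dual Nakano semi-posi sing}. I expect the extension step to be the only genuinely delicate point: in the Griffiths case it is the classical extension of plurisubharmonic functions, whereas in the dual Nakano case the object to be extended is a positive $(n-1,n-1)$-current, so one must instead appeal to the extension theory for closed positive currents across analytic sets (Skoda--El Mir), both across $Z_0$ and across the codimension $\geq2$ locus $X\setminus X(\mathscr{F})$.
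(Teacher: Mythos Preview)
The paper does not give its own proof of this proposition: it is quoted as a known result from [HPS18, Proposition 19.3], and the surrounding text says explicitly ``For Griffiths positivity, [HPS18, Proposition 19.3] is already known.'' There is therefore nothing in the paper to compare your argument against.

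That said, your proof is correct and is essentially the standard argument (and, as far as I can tell, the one in [HPS18]). Two minor comments. First, what you call ``extending across $Z_0\cap X(\mathscr{F})\cap Y_0$'' is not really an analytic extension step: the function $|\phi^{\vee}\xi|_{h^*}$ is already defined and plurisubharmonic on all of $U\cap X(\mathscr{F})$, so it directly furnishes the extension of $|\xi|_{h_{\mathscr{G}}^*}$ from $U\setminus A$ to $U\cap X(\mathscr{F})$ by identification; the only genuine extension is from $U\cap X(\mathscr{F})$ to $U$. Second, your claim that plurisubharmonic functions extend across an analytic set of codimension $\geq 2$ with no growth hypothesis is correct but not entirely trivial: such a set has vanishing $(2n-2)$-dimensional Hausdorff measure and is therefore removable for plurisubharmonic functions, and this deserves a reference rather than being asserted. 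With these clarifications your argument goes through. Your closing remark on the dual Nakano case lies outside the present statement, but you are right that the corresponding extension step for positive $(n-1,n-1)$-currents requires a Skoda--El~Mir type argument rather than the scalar psh extension theorem.
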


\begin{proposition}
    Let $0\to S\to E\to Q\to 0$ be an exact sequence of holomorphic vector bundles. Let $h$ be a singular Hermitian metric on $E$. Then we have that 
    \begin{itemize}
        \item [$(a)$] If $h$ is Nakano semi-negative then $S$ has a natural induced singular Hermitian metric which is Nakano semi-negative.
        \item [$(b)$] If $h$ is dual Nakano semi-positive then $Q$ has a natural induced singular Hermitian metric which is dual Nakano semi-positive.
    \end{itemize}
\end{proposition}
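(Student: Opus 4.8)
The plan is to reduce both statements to the $(n-1,n-1)$-form characterization of Nakano semi-negativity from Definition \ref{def Nakano semi-negative as Raufi}, mimicking the smooth prototypes: part $(c)$ of Proposition \ref{Dem-book Prop6.10} for $(a)$ and Corollary \ref{exact sequence and smooth dual Nakano semi posi} for $(b)$.

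For $(a)$, I would equip $S$ with the restricted metric $h_S:=h|_S$. First one checks that this is a genuine singular Hermitian metric: since $h$ is a measurable family of non-negative Hermitian forms with $0<\det h<+\infty$ almost everywhere, at almost every point $h$ is positive definite and hence so is its restriction to $S_x\subseteq E_x$, giving $0<\det h_S<+\infty$ a.e. Next, let $w=(w_1,\dots,w_n)$ be an arbitrary $n$-tuple of local holomorphic sections of $S$. Composing with the holomorphic inclusion $j\colon S\hookrightarrow E$ produces an $n$-tuple $j\circ w$ of local holomorphic sections of $E$, and since $j$ is a fibrewise isometry onto its image, $(w_a,w_b)_{h_S}=(j(w_a),j(w_b))_h$ for all $a,b$. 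Hence, term by term,
\begin{align*}
    T^{h_S}_w=\sum(w_a,w_b)_{h_S}\,\widehat{dz_a\wedge d\overline{z}_b}=\sum(j(w_a),j(w_b))_h\,\widehat{dz_a\wedge d\overline{z}_b}=T^h_{j\circ w}.
\end{align*}
By the hypothesis that $h$ is Nakano semi-negative, $T^h_{j\circ w}$ is plurisubharmonic, hence so is $T^{h_S}_w$; as $w$ is arbitrary, $h_S$ is Nakano semi-negative.

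For $(b)$, I would dualize the given sequence to the exact sequence of holomorphic vector bundles
\begin{align*}
    0\longrightarrow Q^*\longrightarrow E^*\longrightarrow S^*\longrightarrow 0,
\end{align*}
which exhibits $Q^*$ as a holomorphic subbundle of $E^*$. By Definition \ref{def dual Nakano semi-posi sing}, $h$ being dual Nakano semi-positive means that $h^*$ is Nakano semi-negative on $E^*$; applying part $(a)$ to the subbundle $Q^*\hookrightarrow E^*$ with the metric $h^*$ shows that the restriction $h^*|_{Q^*}$ on $Q^*$ is Nakano semi-negative. Let $h_Q$ be the quotient metric induced on $Q$ by $h$ (again a singular Hermitian metric, since $\det h_Q=1/\det(h^*|_{Q^*})$ is finite and positive a.e.). By the standard duality between quotient and restricted metrics, which identifies $(Q^*,h^*|_{Q^*})$ isometrically with the $h$-orthogonal complement of $S$ inside $E^*$ via the antilinear isometry $J$, one has $(h_Q)^*=h^*|_{Q^*}$. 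Since $(h_Q)^*$ is Nakano semi-negative, the metric $h_Q$ on $Q$ is dual Nakano semi-positive, which is the claim.

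I do not expect a genuine analytic obstacle here: once the $T$-form description is in hand, the argument is purely formal, and the two smooth analogues in the paper already proceed in exactly this manner. The only points requiring care are the measure-theoretic bookkeeping — verifying that restriction to a subbundle and passage to a quotient preserve the condition $0<\det<+\infty$ a.e. — and the elementary observation that a local holomorphic section of the subbundle $S$ (resp.\ $Q^*$) is, via the holomorphic inclusion, a local holomorphic section of the ambient bundle carrying the identical pointwise norm, which is precisely what forces the identity $T^{h_S}_w=T^h_{j\circ w}$ to hold on the nose rather than only up to a correction term.
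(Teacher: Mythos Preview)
Your proposal is correct and follows essentially the same approach as the paper: for $(a)$ you restrict the metric and observe $T^{h_S}_w=T^h_{j\circ w}$, and for $(b)$ you dualize and apply $(a)$ to $Q^*\hookrightarrow E^*$. The paper's proof of $(b)$ is slightly terser---it stops at ``$Q^*$ has a Nakano semi-negative singular Hermitian metric'' without spelling out the identification $(h_Q)^*=h^*|_{Q^*}$---but your extra bookkeeping on the quotient/restriction duality and on the a.e.\ determinant conditions is sound and does not constitute a different route.
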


In particular, Proposition \ref{Dem-book Prop6.10} and Corollary \ref{exact sequence and smooth dual Nakano semi posi} follow from this proposition.

\begin{proof}
    $(a)$ We define the natural singular Hermitian metric $h_S$ of $S$ induced from $h$ by $|u|_{h_S}:=|ju|_h$ for any section $u$ of $S$.
    By the assumption, for any local holomorphic section $s_j\in\mathcal{O}(E)$, the $(n-1,n-1)$-form $T^h_u=\sum(s_j,s_k)_h\widehat{dz_j\wedge d\overline{z}_k}$ is plurisubharmonic, i.e. $\idd T^h_u\geq0$.
    For any local holomorphic section $u_k\in\mathcal{O}(S)$, images $ju_k$ is also local holomorphic section of $E$, i.e. $ju_k\in\mathcal{O}(E)$.
    Then from the equality
    \begin{align*}
        T^{h_S}_u=\sum(u_j,u_k)_{h_S}\widehat{dz_j\wedge d\overline{z}_k}=\sum(ju_j,ju_k)_h\widehat{dz_j\wedge d\overline{z}_k}=T^h_{ju},
    \end{align*}
    we have that $T^{h_S}_u$ is also plurisubharmonic, i.e. $h_S$ is Nakano semi-negative.

    $(b)$ Here, the sequence $0\to Q^*\to E^*\to S^*\to 0$ is also exact. By the assumption and $(a)$, $Q^*$ has a Nakano semi-negative singular Hermitian metric.
\end{proof}

\subsection{A concrete example}

We consider the following exact sequence of holomorphic vector bundles
\begin{align*}
    0\longrightarrow \mathcal{O}_{\mathbb{P}^n}(-1) \stackrel{j}{\hookrightarrow} \underline{V}:=\mathbb{P}^n\times\mathbb{C}^{n+1} \stackrel{g}{\twoheadrightarrow} Q:=\underline{V}/\mathcal{O}_{\mathbb{P}^n}(-1) \longrightarrow 0.
\end{align*}
From this sequence, we get $\mathrm{det}\,\underline{V}=\mathrm{det}\,Q\otimes \mathcal{O}_{\mathbb{P}^n}(-1)$ and get isomorphisms 
\begin{align*}
    \mathrm{det}\,Q\cong \mathcal{O}_{\mathbb{P}^n}(1), \quad T_{\mathbb{P}^n}=Q\otimes \mathcal{O}_{\mathbb{P}^n}(1)\cong Q\otimes \mathrm{det}\,Q,
\end{align*}
where $\mathrm{det}\,\underline{V}$ is also trivial. 
By Griffiths semi-positivity of $\underline{V}$ and Corollary \ref{exact sequence and smooth dual Nakano semi posi}, the bundle $Q$ is dual Nakano semi-positive and then Griffiths semi-positive.
Therefore, $T_{\mathbb{P}^n}$ is Nakano semi-positive from Demailly-Skoda's theorem (see \cite{DS80}), and is Griffiths positive from $Q\geq_{Grif}0$ and $\mathrm{det}\,Q\cong \mathcal{O}_{\mathbb{P}^n}(1)>0$.
But the tangent bundle $T_{\mathbb{P}^n}$ has no smooth Nakano positive metric. In fact, if $T_{\mathbb{P}^n}>_{Nak}0$ then $H^q(\mathbb{P}^n,K_{\mathbb{P}^n}\otimes T_{\mathbb{P}^n})=0$ for any $q\geq1$ by the Nakano vanishing theorem.
However, this contradicts the following
\begin{align*}
    H^{n-1}(\mathbb{P}^n,K_{\mathbb{P}^n}\otimes T_{\mathbb{P}^n})\cong H^1(\mathbb{P}^n,T^*_{\mathbb{P}^n})=H^{1,1}(\mathbb{P}^n,\mathbb{C})=\mathbb{C}.
\end{align*}

\begin{proposition}\label{Q has no positivity}
    We have that $Q$ has no smooth Griffiths positive Hermitian metric and no singular Hermitian metric which is (globally) Nakano semi-positive and satisfying $\nu(-\log\mathrm{det}\,h,x)<2$ for any point $x\in \mathbb{P}^n$.
\end{proposition}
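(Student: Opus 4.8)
The plan is to derive both non-existence statements from facts about $T_{\mathbb{P}^n}$ already recorded just above: namely the isomorphism $Q\otimes\mathrm{det}\,Q\cong T_{\mathbb{P}^n}$ (with $\mathrm{det}\,Q\cong\mathcal{O}_{\mathbb{P}^n}(1)$), and the non-vanishing $H^{n-1}(\mathbb{P}^n,K_{\mathbb{P}^n}\otimes T_{\mathbb{P}^n})\cong H^{1,1}(\mathbb{P}^n,\mathbb{C})=\mathbb{C}$. Here $n\geq2$ is understood, since for $n=1$ one has $Q\cong\mathcal{O}_{\mathbb{P}^1}(1)$ and the claim is vacuous.

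For the first statement I would argue by contradiction. If $Q$ carried a smooth Griffiths positive Hermitian metric, then by Demailly--Skoda's theorem (\cite{DS80}), in the form already used in this article, the induced metric on $Q\otimes\mathrm{det}\,Q$ would be Nakano positive; since $Q\otimes\mathrm{det}\,Q\cong T_{\mathbb{P}^n}$, the tangent bundle would then admit a smooth Nakano positive metric, so that $H^{q}(\mathbb{P}^n,K_{\mathbb{P}^n}\otimes T_{\mathbb{P}^n})=0$ for all $q\geq1$ by the Nakano vanishing theorem --- contradicting the non-vanishing above. (Alternatively, a Griffiths positive $Q$ would be ample, whereas on a line $\ell$ the restriction $Q|_{\ell}\cong\mathcal{O}_{\mathbb{P}^1}(1)\oplus\mathcal{O}_{\mathbb{P}^1}^{\oplus(n-1)}$ is not ample.)

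For the second statement, suppose $h$ were a singular Hermitian metric on $Q$ that is (globally) Nakano semi-positive in the sense of Definition \ref{def Nakano semi-posi sing by Inayama} and satisfies $\nu(-\log\mathrm{det}\,h,x)<2$ for every $x\in\mathbb{P}^n$. Fix a smooth positive Hermitian metric $h_1$ on $\mathcal{O}_{\mathbb{P}^n}(1)$ and pass to the singular metric $h\otimes h_1$ on $Q\otimes\mathcal{O}_{\mathbb{P}^n}(1)=Q\otimes\mathrm{det}\,Q\cong T_{\mathbb{P}^n}$. Because $h_1$ is smooth, $-\log\mathrm{det}\,(h\otimes h_1)$ differs from $-\log\mathrm{det}\,h$ by a smooth function, so its Lelong numbers are again $<2$ everywhere, whence $\mathscr{E}(h\otimes h_1)=\mathcal{O}(T_{\mathbb{P}^n})$ by the fact recalled after Theorem \ref{min ext prop and not Nak semi-posi}. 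On the other hand, the positive twist $(\mathcal{O}_{\mathbb{P}^n}(1),h_1)$ supplies the strict positivity absent from $h$, so the singular Nakano vanishing theorem of \cite{Ina22} applies to $h\otimes h_1$ on the projective manifold $\mathbb{P}^n$ and gives $H^{q}(\mathbb{P}^n,K_{\mathbb{P}^n}\otimes T_{\mathbb{P}^n}\otimes\mathscr{E}(h\otimes h_1))=0$ for all $q\geq1$. Together with $\mathscr{E}(h\otimes h_1)=\mathcal{O}(T_{\mathbb{P}^n})$ this forces $H^{n-1}(\mathbb{P}^n,K_{\mathbb{P}^n}\otimes T_{\mathbb{P}^n})=0$, again contradicting the non-vanishing.

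I expect the main obstacle to be the vanishing step in the second part: one must promote the purely local ``optimal $L^2$-estimate on small Stein sets'' that defines (global) Nakano semi-positivity (Definition \ref{def Nakano semi-posi sing by Inayama}) to a genuine global vanishing theorem on $\mathbb{P}^n$ for the twisted metric $h\otimes h_1$. Concretely this means running H\"ormander's $L^2$-estimate (Theorem \ref{Hormander L2-estimate}) globally on $\mathbb{P}^n$, with the strictly positive curvature of $h_1$ controlling the curvature term and $\mathscr{E}(h\otimes h_1)$ serving as the multiplier sheaf; this is precisely the mechanism behind the vanishing theorems of \cite{Ina22}. The remaining ingredients --- the Demailly--Skoda input, the Lelong-number/$L^2$-subsheaf comparison, and the computation $H^{n-1}(\mathbb{P}^n,K_{\mathbb{P}^n}\otimes T_{\mathbb{P}^n})=\mathbb{C}$ --- are already available in the excerpt, so the rest of the argument is bookkeeping.
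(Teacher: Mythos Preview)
Your proposal is correct and follows essentially the same route as the paper: reduce both claims to the non-vanishing $H^{n-1}(\mathbb{P}^n,K_{\mathbb{P}^n}\otimes T_{\mathbb{P}^n})\cong\mathbb{C}$ via $T_{\mathbb{P}^n}\cong Q\otimes\mathrm{det}\,Q$, using Demailly--Skoda for the Griffiths-positive case and a singular Nakano vanishing theorem (with $\mathscr{E}(h)=\mathcal{O}(Q)$ from the Lelong-number bound) for the singular case. The only cosmetic difference is that the paper cites the vanishing theorem in the packaged form \cite[Theorem 6.1]{Wat22b}, which already has the positive line-bundle twist built in, so there is no need to first verify that $h\otimes h_1$ is globally Nakano semi-positive; the ``obstacle'' you flag is therefore not an issue.
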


\begin{proof}
    First, if $Q$ has a smooth Griffiths positive Hermitian metric then $T_{\mathbb{P}^n}\cong Q\otimes\mathrm{det}\,Q$ has a smooth Nakano positive Hermitian metric by Demailly-Skoda's theorem.
    Second, if $Q$ has a smooth Nakano semi-positive Hermitian metric then $T_{\mathbb{P}^n}\cong Q\otimes\mathrm{det}\,Q$ has a smooth Nakano positive Hermitian metric by $\mathrm{det}\,Q\cong\mathcal{O}_{\mathbb{P}^n}(1)$ is positive line bundle.
    But these contradict that $T_{\mathbb{P}^n}$ is not Nakano positive.

    Finally, if $Q$ has a singular Hermitian metric $h$ which is (globally) Nakano semi-positive and satisfying $\nu(-\log\mathrm{det}\,h,x)<2$ for any point $x\in \mathbb{P}^n$, then from the vanishing theorem (see [Wat22b,\,Theorem\,6.1]) for singular Nakano semi-positivity we have 
    \begin{align*}
        H^q(\mathbb{P}^n,K_{\mathbb{P}^n}\otimes\mathcal{O}_{\mathbb{P}^n}(1)\otimes\mathscr{E}(h))=0
    \end{align*}
    for $q\geq1$. By the fact that if $\nu(-\log\mathrm{det}\,h,x)<2$ then $\mathscr{E}(h)=\mathcal{O}(Q)$ (see the proof of [Wat22b,\,Theorem\,6.2]), we get 
    \begin{align*}
        0=H^q(\mathbb{P}^n,K_{\mathbb{P}^n}\otimes\mathcal{O}_{\mathbb{P}^n}(1)\otimes\mathscr{E}(h))\cong H^q(\mathbb{P}^n,K_{\mathbb{P}^n}\otimes \mathrm{det}\,Q\otimes Q)
        \cong H^q(\mathbb{P}^n,K_{\mathbb{P}^n}\otimes T_{\mathbb{P}^n}).
    \end{align*}
    But this vanishing contradict that $H^{n-1}(\mathbb{P}^n,K_{\mathbb{P}^n}\otimes T_{\mathbb{P}^n})\cong \mathbb{C}$.
\end{proof}

\vspace{2mm}

$\textit{Proof of Theorem \ref{min ext prop and not Nak semi-posi}}$.
Let $I_V$ be a trivial Hermitian metric on $\underline{V}$ then $I_V$ has the minimal extension property by the optimal Ohsawa-Takegoshi $L^2$-extension theorem (see \cite{Blo13}, \cite{GZ12}).
From Proposition \ref{LS22 Prop 6 and 7}, the induced Hermitian metric $h'$ on $Q$ has the minimal extension property.
Then this theorem is shown by Proposition \ref{Q has no positivity}. \qed

\vspace{3mm}

Finally, we ascertain by concrete calculations that the naturally induced smooth metric $h_Q$ of $Q$ has indeed the minimal extension property.
Here, this metric $h_Q$ induced from $I_{\underline{V}}$ and $g$ defined by $|u|_{h_Q}:=|g^*u|_{I_V}$ for any section $u$ of $Q$.

Let $a\in\mathbb{P}^n$ be fixed. Choose an orthonormal basis $(e_0,e_1,\ldots,e_n)$ of $\mathbb{C}^{n+1}$ such that $a=[e_0]$.
Consider the natural embedding $\mathbb{C}^n \hookrightarrow \mathbb{P}^n:0\mapsto a$
which sends $z=(z_1,\ldots,z_n)\mapsto [e_0+z_1e_1+\cdots+z_ne_n]$. Then $\varepsilon(z)=e_0+z_1e_1+\cdots+z_ne_n$
defines a non-zero hol section of $\mathcal{O}_{\mathbb{P}^n}(-1)|_{\mathbb{C}^n}$.
The adjoint homomorphisms $g^*:Q\to\underline{V}$ is $C^\infty$ and can be described as the orthogonal splitting of the above exact sequence.
The images $(\tilde{e}_1,\ldots,\tilde{e}_n)$ of $(e_1,\ldots,e_n)$ in $Q$ define a local holomorphic frame of $Q|_{\mathbb{C}^n}$
and we already know that $gg^*=\mathrm{id}_{\underline{V}}$ and 
\begin{align*}
    g^*\cdot\tilde{e}_j=e_j-\frac{\langle e_j,\varepsilon\rangle}{|\varepsilon|^2}\varepsilon=e_j-\frac{\overline{z}_j}{1+|z|^2}\varepsilon=e_j-\zeta_j\varepsilon,
\end{align*}
where put $\zeta_j=\frac{\overline{z}_j}{1+|z|^2}$ (see [Dem-book,\,ChapterV]). 
By $gg^*=\mathrm{id}_{\underline{V}}$ and $\varepsilon\in\mathrm{ker}g$, we get $\tilde{e}_j=gg^*\tilde{e}_j=g(e_j-\zeta_j\varepsilon)=ge_j$.
From these, the matrix representations of $g$ and $g^*$ with respect to frames $(\tilde{e}_1,\ldots,\tilde{e}_n)$ and $(e_1,\ldots,e_n)$ is as follows.
\begin{align*}
    g=\begin{pmatrix}
        -z_1 &    &  \\
        \vdots &  I_n &  \\
        -z_n &    &  \\
    \end{pmatrix}, \quad
    g^*=\begin{pmatrix}
        0 \\
        I_n \\
        \end{pmatrix}
        +G^*, \quad
    G^*=
    \begin{pmatrix}
        -\zeta_1 & \cdots & \cdots & -\zeta_n \\
        -\zeta_1z_1 &  &  & -\zeta_nz_1 \\
        \vdots & & & \vdots \\
        -\zeta_1z_n & \cdots & \cdots & -\zeta_nz_n \\
    \end{pmatrix},       
\end{align*}
where we can write $G^*=(-\zeta_1\varepsilon,\cdots,-\zeta_n\varepsilon)$. In this setting, we prove the following.

\begin{proposition}\label{min ext prop on Q}
    There exists a smooth Hermitian metric $h_Q$ on $Q=\underline{V}/\mathcal{O}_{\mathbb{P}^n}(-1)$ such that $h_Q$ has the minimal extension property.
\end{proposition}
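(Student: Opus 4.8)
The plan is to exhibit an explicit smooth Hermitian metric $h_Q$ on $Q$ coming from the orthogonal splitting $g^*$ and to verify the minimal extension property directly at the fixed point $a = [e_0]$, using the frame $(\tilde e_1,\dots,\tilde e_n)$ and the coordinates $z = (z_1,\dots,z_n)$ introduced above. Concretely, I would set $|u|_{h_Q} := |g^*u|_{I_V}$, so that in the frame $(\tilde e_j)$ the Gram matrix is $(h_Q)_{j\overline k} = \langle g^*\tilde e_j, g^*\tilde e_k\rangle_{I_V}$. Using $g^*\tilde e_j = e_j - \zeta_j\varepsilon$ with $\zeta_j = \overline z_j/(1+|z|^2)$ and $|\varepsilon|^2 = 1+|z|^2$, a short computation gives $(h_Q)_{j\overline k} = \delta_{jk} - \dfrac{\overline z_j z_k}{1+|z|^2}$ — this is the standard Fubini–Study type metric on $Q \cong T_{\mathbb P^n}\otimes \mathcal O(-1)$ up to normalization. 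In particular at $z = 0$ (the point $a$) the metric is the identity, and $\tilde e_j(0)$ is an orthonormal frame.

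Next I would verify the minimal extension property at $a$. Take $v \in Q_a$ with $|v|_{h_Q}(a) = 1$; writing $v = \sum_j v_j \tilde e_j(0)$ we have $\sum_j |v_j|^2 = 1$. The natural candidate extension is the \emph{constant} section $s(z) := \sum_j v_j \tilde e_j(z)$, which is holomorphic on the coordinate ball $B$ and satisfies $s(0) = v$. Then $|s|^2_{h_Q}(z) = \sum_{j,k} v_j \overline{v_k}\,(h_Q)_{j\overline k}(z) = |v|^2 - \dfrac{|\sum_j v_j \overline z_j|^2}{1+|z|^2} \le 1$ pointwise on $B$, hence $\frac{1}{\mathrm{Vol}(B)}\int_B |s|^2_{h_Q}\,dV_B \le 1$. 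Since $a \in \mathbb P^n$ was arbitrary (every point can be moved to $[e_0]$ by a unitary change of basis, under which the construction is equivariant), and since $Q$ is locally free everywhere so the bad set $Z$ can be taken empty, this establishes the minimal extension property.

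The only mild subtlety — and the step to be most careful about — is checking that the pointwise bound $|s|^2_{h_Q} \le 1$ really holds on all of $B$ and not merely at the center: this is where the explicit form of the correction term matters, and it works precisely because the Gram matrix $\delta_{jk} - \overline z_j z_k/(1+|z|^2)$ is $\le I_n$ as a Hermitian form for every $z$ (its difference from $I_n$ is negative semi-definite, being $-\frac{1}{1+|z|^2}$ times the rank-one positive form $\overline z \otimes z$). Once that inequality is recorded, the volume estimate is immediate and in fact with slack, so one may even shrink nothing. I would also remark that this $h_Q$ coincides with the metric $h'$ produced abstractly via Proposition \ref{LS22 Prop 6 and 7} from the trivial metric on $\underline V$, which reconciles the explicit computation with the general machinery and completes the verification promised before Proposition \ref{min ext prop on Q}.
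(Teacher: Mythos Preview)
Your argument is correct and in fact cleaner than the paper's own proof. Both start from the same quotient metric $h_Q$ defined by $|u|_{h_Q}=|g^*u|_{I_V}$, but the paper proceeds indirectly: it invokes the optimal Ohsawa--Takegoshi theorem to obtain a section $s$ of $\underline{V}$ with $s(0)=g^*v$ and controlled $L^2$-norm, pushes it down to $gs\in H^0(B,Q)$, and then attempts to show $|g^*gs|^2_{I_V}\le |s|^2_{I_V}$ via an explicit coordinate computation; the estimates made there are suboptimal and force the author to shrink the radius of $B$ below $(-n+\sqrt{n^2+8})/4$. Your approach sidesteps all of this: you take the \emph{constant} section $s=\sum_j v_j\tilde e_j$, which is globally holomorphic since $\tilde e_j=g(e_j)$ with $e_j$ a constant section of the trivial bundle, and observe that the Gram matrix $\delta_{jk}-\overline z_j z_k/(1+|z|^2)$ is $\le I_n$ pointwise, giving $|s|^2_{h_Q}\le 1$ everywhere. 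This is really the statement that $g^*g$ is an orthogonal projection and hence norm-decreasing, applied to the unit vector $\sum_j v_j e_j\in\underline V$; in particular the bound holds on all of $\mathbb P^n$, so the estimate $\frac{1}{\mathrm{Vol}(B)}\int_B|s|^2_{h_Q}\,dV_B\le 1$ is automatic for \emph{every} embedded ball $\iota:B\hookrightarrow\mathbb P^n$ with $\iota(0)=a$, not just the coordinate one---you should state this explicitly, since the definition quantifies over all such embeddings. What your route buys is that no appeal to Ohsawa--Takegoshi is needed and no radius restriction arises; what the paper's route illustrates (before the estimates become inefficient) is the general mechanism of Proposition~\ref{LS22 Prop 6 and 7}, namely pushing the minimal extension property through a quotient.
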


\begin{proof}
    Let $I_V$ be a trivial Hermitian metric on $\underline{V}$ then $I_V$ has the minimal extension property by the optimal Ohsawa-Takegoshi theorem.
    We define the natural smooth Hermitian metric $h_Q$ of $Q$ induced from $I_V$ by $|u|_{h_Q}:=|g^*u|_{I_V}$ for any section $u$ of $Q$.
    We show that $h_Q$ has the minimal extension property. By the minimal extension property of $I_V$, for any $a\in\mathbb{P}^n$ and any $v\in Q_a$ with $|v|_{h_Q}=|g^*v|_{I_V}=1$, there is a holomorphic section $s\in H^0(B,\underline{V})$ such that 
    \begin{align*}
        s(0)=g^*v \quad \mathrm{and} \quad \frac{1}{\mathrm{Vol}(B)}\int_B|s|^2_{I_V}dV_B\leq1,
    \end{align*}
    where $g^*v\in \underline{V}_a$. From $gg^*=\mathrm{id}_{\underline{V}}$ then the composition $gs$ is a holomorphic section, i.e. $gs\in H^0(B,Q)$, and $gs(0)=gg^*v=v$.
    Hence, if $|gs|^2_{h_Q}=|g^*gs|^2_{I_V}\leq|s|^2_{I_V}$ on $B$ then $h_Q$ has the minimal extension property.

    We can write $s=\sum^n_{j=0}s_je_j=\sigma_0\varepsilon+\sum^n_{j=0}\sigma_je_j\in H^0(B,\underline{V})$, where $\sigma_0=s_0, \,\sigma_j=s_j-s_0z_j$ and $s_j\in\mathcal{O}(B)$.
    Then we have that 
    \begin{align*}
        gs&=\sum^n_{j=1}\sigma_jge_j=\sum^n_{j=1}\sigma_j\widetilde{e}_j,\\    
        g^*gs&=\sum^n_{j=1}\sigma_jg^*\widetilde{e}_j=\sum^n_{j=1}\sigma_j(e_j-\zeta_j\varepsilon)
        =\Bigl(\sum^n_{j=1}\zeta_j\sigma_j\Bigr)e_0+\sum^n_{j=1}\Bigl(\sigma_j-z_j\sum^n_{k=1}\zeta_k\sigma_k\Bigr)e_j,\\
        |g^*gs|^2_{I_V}&=\left|\sum^n_{j=1}\zeta_j\sigma_j\right|^2+\sum^n_{j=1}\left|\sigma_j-z_j\sum^n_{k=1}\zeta_k\sigma_k\right|^2\\
        &\leq(1+|z|^2)\left|\sum^n_{j=1}\zeta_j\sigma_j\right|^2+\sum^n_{j=1}|\sigma_j|^2=\frac{1}{1+|z|^2}\left|\sum^n_{j=1}\overline{z}_j\sigma_j\right|^2+\sum^n_{j=1}|\sigma_j|^2\\
        &\leq\frac{1}{1+|z|^2}\Bigl(\left|\sum^n_{j=1}s_j\overline{z}_j\right|^2+|s_0|^2|z|^4\Bigr)+\sum^n_{j=1}\Bigl(|s_j|^2+|s_0|^2|z_j|^2\Bigr)\\
        &=\frac{|z|^2}{1+|z|^2}\Bigl(\sum^n_{j=1}|s_j|^2+|s_0|^2|z|^2\Bigr)+\sum^n_{j=1}|s_j|^2+|s_0|^2|z|^2\\
        &=\frac{1+2|z|^2}{1+|z|^2}\Bigl(\sum^n_{j=1}|s_j|^2+|s_0|^2|z|^2\Bigr),
    \end{align*} 
    where $\sum^n_{j=1}\overline{z}_j\sigma_j=\sum^n_{j=1}(s_j\overline{z}_j-s_0|z_j|^2)=\sum^n_{j=1}s_j\overline{z}_j-s_0|z|^2$.
    Therefore, if 
    \begin{align*}
        \frac{1+2|z|^2}{1+|z|^2}\Bigl(\sum^n_{j=1}|s_j|^2+|s_0|^2|z|^2\Bigr)\leq|s|^2_{I_V}=\sum^n_{j=0}|s_j|^2,
    \end{align*}
    i.e. $|z|^2\sum^n_{j=1}|s_j|^2\leq|s_0|^2(1-2|z|^4)$, then we obtain $|g^*gs|^2_{I_V}\leq|s|^2_{I_V}$.

    Here, $s_j$ is expressed as a scalar multiple of $s_0$ for any $j$. In fact, by the optimal Ohsawa-Takegoshi extension theorem for trivial line bundle, there is a holomorphic function $f\in\mathcal{O}(B)$ such that 
    \begin{align*}
        f(0)=1 \quad \mathrm{and} \quad \frac{1}{\mathrm{Vol}(B)}\int_B|f|^2dV_B\leq1.
    \end{align*}
    We write $g^*v=\sum^n_{j=0}w_je_j\in\underline{V_a}$ where $1=|g^*v|^2_{I_V}=\sum^n_{j=0}|w_j|^2$ and $w_j\in\mathbb{C}$. 
    By changing the subscript of the local trivial frame $(e_j)$, $w_0\ne0$ can be assumed. Therefore, we can take $s_j:=w_jf=\frac{w_j}{w_0}f\in\mathcal{O}(B)$.
    Indeed, it is $s(0)=\sum^n_{j=0}w_jf(0)e_j=\sum^n_{j=0}w_je_j=g^*v$ and $|s|^2_{I_V}=(\sum^n_{j=0}|w_j|^2)|f|^2=|f|^2$.

    Thus the condition $|z|^2\sum^n_{j=1}|s_j|^2\leq|s_0|^2(1-2|z|^4)$ is sufficient for $2|z|^2+|z|^2(1-1/|w_0|^2)-1\leq0$.
    Since $w_0$ is taken as one of the non-zero in $\{w_0,\ldots,w_n\}$ that satisfy $\sum^n_{j=0}|w_j|^2=1$, we get $|w_0|^2\geq\frac{1}{1+n}$.
    Hence, if the radius of $B$ is taken to be smaller than $(-n+\sqrt{n^2+8})/4>0$ which is a solution of $2r^2+nr-1=0$, 
    then we have that $|gs|^2_{h_Q}=|g^*gs|^2_{I_V}\leq|s|^2_{I_V}$ on $B$ for any solution $s$ of the optimal Ohsawa-Takegoshi extension theorem for any $g^*v\in\underline{V}_a$.
\end{proof}

{\bf Acknowledgement. } 
The author would like to thank my supervisor Professor Shigeharu Takayama for guidance and helpful advice, and Professor Takahiro Inayama for useful advice.
The author also thanks Yoshiaki Suzuki for the helpful discussion about Kodaira-Spencer maps.

$ $

\rightline{\begin{tabular}{c}
    $\it{Yuta~Watanabe}$ \\
    $\it{Graduate~School~of~Mathematical~Sciences}$ \\
    $\it{The~University~of~Tokyo}$ \\
    $3$-$8$-$1$ $\it{Komaba, ~Meguro}$-$\it{ku}$ \\
    $\it{Tokyo, ~Japan}$ \\
    ($E$-$mail$ $address$: watayu@g.ecc.u-tokyo.ac.jp)
\end{tabular}}

\end{document}